\documentclass[11pt]{article}
\usepackage{subfigure}
\usepackage{amssymb,amsmath,amsthm,epsfig,a4,verbatim,pstricks,ifthen,setspace,ulem,color} 
\usepackage{url}
\usepackage{float}
\allowdisplaybreaks
\definecolor{mygreen}{rgb}{0.05,0.6,0.05}
\newtheorem{thm}{\sc Theorem.}[section]
\newtheorem{lem}{\sc Lemma.}[section]

\renewcommand{\theequation}{\arabic{section}.\arabic{equation}}
\newenvironment{AMS}%
{{\upshape\bfseries AMS subject classifications. }\ignorespaces}{}
\newenvironment{keywords}{{\upshape\bfseries Key words. }\ignorespaces}{}

\newcommand{\tx}{\tilde{x}}

\newcommand{\tchi}{\tilde{\chi}}

\def\ba{\begin{align}}
\def\ea{\end{align}}

\newcommand{\dg}{\mbox{det} \, G}
\newcommand{\dgh}{\mbox{det} \, G_h}
\newcommand{\dghm}{\mbox{det} \, G_h^m}
\newcommand{\dghmph}{\mbox{det} \, G_h^{m+\frac12}}
\newcommand{\nuu}{\tilde{\nu}}
\newcommand{\tm}{\tilde{ \mathcal T}_{\mathcal{M}_h}}
\newcommand{\hmaxm}{h_{max}}
\newcommand{\tgam}{{\mathcal T}^m_h}

\def\epsilon{\varepsilon} 

\def\hat{\widehat}

\begin{document}
\title{Error analysis of a finite element scheme for parametric mean curvature flow based
on the DeTurck trick }
\author{Klaus Deckelnick  \footnotemark[3]\ \and 
        Vanessa Styles \footnotemark[4]}

\renewcommand{\thefootnote}{\fnsymbol{footnote}}
\footnotetext[3] { Otto-von-Guericke-Universit{\"a}t Magdeburg, Institut f{\"u}r Analysis und Numerik,
39106 Magdeburg, Germany}
\footnotetext[4]{Department of Mathematics, University of Sussex, Brighton, BN1 9RF, UK}

\date{}

\maketitle

\begin{center}
{\it We dedicate this paper to Charlie Elliott on the occasion of his 75th birthday.}
\end{center}

\begin{abstract}
The paper is concerned with the error analysis of a numerical scheme for the approximation of parametric mean curvature flow. The scheme we study is based on a reparametrization using the DeTurck trick and was proposed by Elliott and Fritz in \cite{EF17}. 
In the semidiscrete case, for a spatial discretization by finite elements of order $k \geq 2$ we prove an optimal $H^1$--error estimate for the position vector. We present  numerical experiments that confirm this error bound and demonstrate that
the scheme has good properties with respect to the distribution of mesh points as already observed in \cite{EF17}.
\end{abstract} 

\begin{keywords} 
mean curvature flow, parametric approach, DeTurck trick,
finite elements,  error analysis
\end{keywords}

\begin{AMS}  
65M60, 65M15, 35K93
\end{AMS}
\renewcommand{\thefootnote}{\arabic{footnote}}

\section{Introduction} \label{sec:1}

Let us denote by $(\Gamma(t))_{t \in [0,T]}$ a family of closed hypersurfaces in $\mathbb R^3$. We say that
$(\Gamma(t))_{t \in [0,T]}$ evolves by mean curvature if 
\begin{eqnarray} 
V &= &  H \quad \mbox{ on } \Gamma(t), \quad 0<t \leq T, \label{eq:mcf} \\
\Gamma(0) & = & \Gamma_0. \label{eq:mcfinit}
\end{eqnarray}
Here,  $V$ and $H$ denote the normal velocity and the mean curvature of $\Gamma(t)$ respectively, while $\Gamma_0$ is a given initial hypersurface.  Equation \eqref{eq:mcf}  is one of the most important geometric
evolution laws and has been studied intensively for several decades. One of the key properties of the flow is its interpretation as the $L^2$--gradient flow for surface
area, which has led to  numerous applications in  e.g. materials science and image processing. A nice overview of theoretical aspects of the flow can be found in the monographs
by Ecker \cite{Eck04} and Mantegazza \cite{Man11}, while aspects related to the approximation of solutions by finite element methods are treated e.g. in the survey
articles \cite{DDE05} and \cite{BGN20}. \\
In what follows we shall use  a parametric representation of the evolving hypersurfaces $\Gamma(t)$, i.e. $\Gamma(t)=x(\mathcal M,t)$
for some mapping $x:\mathcal M \times [0,T] \rightarrow \mathbb R^3$, where $\mathcal M \subset \mathbb R^3$ denotes a closed reference surface. Denoting by  $H \nu$ the mean curvature vector of $\Gamma(t)$
(with $\nu$ the chosen unit normal), a common approach to study \eqref{eq:mcf} in this setting is to consider the system
\begin{equation} \label{eq:system}
x_t = H \nu  \quad \mbox{ on } \mathcal M \times (0,T].
\end{equation}
Using  the well--known formula $H \nu = \Delta_{\Gamma} \mbox{id}$, \eqref{eq:mcf}, \eqref{eq:mcfinit} then is satisfied provided that $x$ is a solution of the nonlinear parabolic initial value problem
\begin{eqnarray} 
x_t & = &  \Delta_{\Gamma(t)} \mbox{id} \circ x \quad \mbox{ on } \mathcal M \times (0,T], \label{eq:system1} \\
x(\cdot,0) & = & x_0 \qquad \qquad \; \;  \mbox{ on } \mathcal M, \label{eq:sysinit}
\end{eqnarray}
where $\Delta_{\Gamma}$ denotes the Laplace--Beltrami operator on the surface $\Gamma$ and $x_0: \mathcal M \rightarrow \mathbb R^3$ is a parametrisation of $\Gamma_0$. 
 In his seminal paper \cite{Dz90}, Dziuk uses the divergence structure
of the Laplace--Beltrami operator in order to define a  finite element approximation of mean curvature flow. The corresponding scheme employs continuous, piecewise
linear finite elements and is straightforward to implement requiring the solution of a linear system in each time step. However, a convergence analysis for this scheme
 is unfortunately still missing, a major difficulty being the fact that the system \eqref{eq:system} is only degenerate parabolic. This difficulty has
been overcome by Li and Bai \& Li in \cite{Li21, BL23}, where error estimates for Dziuk's method  were obtained, provided that  finite elements of order $k \geq 6$ are used. This result was
subsequently extended to $k \geq 3$ in \cite{BL24}. The analysis in \cite{Li21,BL23,BL24} is based on a matrix--vector form of the underlying finite element scheme, a technique 
that was previously introduced by Kov\'acs, Li and Lubich in \cite{KLL19}, where the first error analysis for parametric mean curvature flow was presented. The corresponding scheme
discretizes  an extended system that contains position,  velocity, normal and 
mean curvature as variables and uses the evolving surface finite element method  introduced in \cite{DE07} with elements of order $k \geq 2$. The analysis was recently improved
by Li and Tang in \cite{LT25} to allow elements of order $k \geq 1$ and to include an optimal $L^2$--error bound. \\
All of the approaches mentioned above have in common that the velocity vector points in normal direction, which in some cases may lead to mesh distortion at the discrete level
with a possible loss of accuracy or even breakdown of the calculation. A strategy in order to counter this effect consists in the introduction of a suitable tangential component
for the velocity vector. An important approach in this direction is the BGN--method, developed by Barrett, Garcke \& N\"urnberg in a series of papers. These schemes introduce
a tangential component at the discrete level which has good properties with respect to the distribution of mesh points, see in particular \cite{BGN08a} and \cite[Section 4.6]{BGN20}.
However, a convergence analysis in the case of surfaces seems to be open. A different approach for choosing a tangential velocity was suggested by Mikula et al. in
\cite{MRSS14}. Here, the velocity is chosen in such a way as to maintain relative volumes during the evolution which is achieved by solving an additional, appropriately defined elliptic
problem at each point in time. 
A more recent development chooses the tangential component of the velocity vector $v$ by
minimizing the energy $\int_{\Gamma(t)} | \nabla_{\Gamma(t)} v |^2$ subject to the constraint $v \cdot \nu=H$, see \cite{HL22} for a formal derivation from the BGN--approach
when the time step size tends to zero. It is shown in \cite{HL22} that the convergence analysis of \cite{KLL19} can be extended to this setting. A variant of the above idea is proposed in \cite{DL24}
and further pursued in \cite{GL25},
where instead the energy $\int_{\mathcal M} | \nabla_{\mathcal M} x(\cdot,t) |^2$ is minimised in order to obtain a conformal parametrization of $\Gamma(t)$. \\
The starting point for the analysis of this paper is an approach that was proposed by Elliott and Fritz in \cite{EF17} and which is based on the DeTurck trick. To do so, they consider the
harmonic map heat flow 
\begin{equation} \label{eq:hmhf}
\psi_t = \frac{1}{\alpha} \Delta_{\mathfrak e,\hat g(t)} \psi \quad \mbox{ on } \mathcal M \times (0,T], \qquad \psi(\cdot,0)=\mbox{id}_{\mathcal M} \quad \mbox{ on } \mathcal M,
\end{equation}
where $\alpha>0$ and $\Delta_{\mathfrak e,\hat g(t)}$ is the Laplace--Beltrami operator with respect to the metrics on $\mathcal M$ that are induced by the Euclidean metric on $\mathbb R^3$ and the metric
induced by the solution $\hat x$ of \eqref{eq:system1}, \eqref{eq:sysinit}.  One thereby obtains a family of diffeomorphisms $\psi(\cdot,t): \mathcal M \rightarrow \mathcal M$, which are used in order to reparametrize $\hat x$ via
$x(p,t):=\hat x(\psi(p,t),t)$. It is shown that $ x$ satisfies the  {\it strictly parabolic}  system
\begin{equation} \label{eq:repar}
x_t = \Delta_{\Gamma(t)} \mbox{id} \circ x + \frac{1}{\alpha} P  \Delta_{\mathcal M}  x \quad \mbox{ on } \mathcal M \times (0,T], \qquad  x(\cdot,0)=x_0 \quad \mbox{ on } \mathcal M,
\end{equation}
see \cite[Section 6]{EF17} for details.  Also, in the above $P=(I_3 - \nu \otimes \nu)$ is the projection onto the tangent space of $\Gamma(t)$. A nice feature of \eqref{eq:repar} is
the fact that the problem only involves the position vector $ x$ and doesn't require any additional variables. In \cite{EF17} finite element algorithms for \eqref{eq:repar} and
for a similar system that is derived by exchanging the roles of $\mathfrak e$ and $\hat g(t)$ are derived and compared to the BGN--method. We shall recall the
corresponding ideas in Section \ref{sec:3}. 
 Finite element algorithms for a variant of  \eqref{eq:repar} using continuous, piecewise linear elements in space and
 either an Euler or a  BDF2 method in time  have been proposed and analyzed with respect to their stability properties in \cite{Duan24} and \cite{Duan25}. Finally,  error estimates for
a finite difference scheme based on the above approach in order to approximate torus--type solutions of mean curvature flow have been obtained by Mierswa in \cite{Mie20}. \\
Our aim in this paper is to develop an error analysis for  a  finite element scheme that approximates \eqref{eq:repar}. After collecting some useful background material in Section \ref{sec:2} we shall derive in 
Section \ref{sec:3} a weak formulation of \eqref{eq:repar}, which gives rise to a natural semidiscretization using finite elements, see \eqref{eq:weak} and \eqref{eq:weakdisc}. As our
main result we shall prove in Section \ref{sec:4}  an $O(h^k)$--error bound in $H^1(\mathcal M)$ for finite elements of order $k \geq 2$. The restriction on the polynomial degree is due to the necessity of
gaining uniform control on the discrete metric tensor via an inverse estimate. Finally, we shall present a couple of numerical results in Section \ref{sec:5} which in particular confirm
the theoretical error bounds and also show that the approach has good mesh properties as already observed in \cite{EF17}.

\section{Preliminaries} \label{sec:2}
\setcounter{equation}{0}

Let $\mathcal M \subset \mathbb R^3$ be a smooth two--dimensional closed, orientable  surface with unit normal field $\mu:\mathcal M \rightarrow \mathbb R^3$. For a function
$f: \mathcal M \rightarrow \mathbb R$ we denote by $\nabla_{\mathcal M} f: \mathcal M \rightarrow \mathbb R^3$
its tangential gradient with components $\underline D_i f,i=1,2,3$. For a vectorfield $F:\mathcal M \rightarrow \mathbb R^3$
we set
\begin{displaymath}
(\nabla_{\mathcal M} F)_{ij}:= \underline D_i F_j, i,j=1,2,3  \;  \mbox{ as well as } \; \nabla_{\mathcal M} \cdot F:= \underline D_i F_i,
\end{displaymath}
while we define for a matrix--valued function $A:\mathcal M \rightarrow \mathbb R^{3 \times3}$
\begin{displaymath}
 [\nabla_{\mathcal M} \cdot A]_j := \underline D_i A_{ij}, j=1,2,3,
\end{displaymath}
where here and in what follows we  use the Einstein summation convention. \\
We shall denote the norm of the Sobolev space $W^{m,p}(\mathcal M) \, (m \in \mathbb N_0, 1 \leq p \leq \infty)$ by $\Vert \cdot \Vert_{W^{m,p}}$; in the case $p=2$ we
simply write $W^{m,2}(\mathcal M)=H^m(\mathcal M)$ as well as  $\Vert \cdot \Vert_{L^2}= \Vert \cdot \Vert$. \\
Let $x:\mathcal M \times [0,T] \rightarrow \mathbb R^3$ be a differentiable mapping such that $x(\cdot,t)$ is an  embedding for each $t \in [0,T]$  
with $\Gamma(t)=x(\mathcal M,t)$. Following \cite[(5.7)]{EF17} it will be convenient to introduce the following globally defined
metric tensor  $G=(G_{ij}): \mathcal M \times [0,T] \rightarrow \mathbb R^{3 \times 3}$ with
\begin{equation} \label{eq:defG}
G_{ij}(p,t):= \underline D_i x(p,t) \cdot \underline D_j x(p,t) + \mu_i(p) \mu_j(p), \quad i,j=1,2,3, \; (p,t)  \in \mathcal M \times [0,T],
\end{equation}
where  $a \cdot b$ denotes  the Euclidean scalar product of  $a,b  \in \mathbb R^3$. For each $(p,t) \in \mathcal M \times [0,T]$ the matrix $G(p,t)$ is invertible and we 
set $(G^{ij}):= G^{-1}$. \\
Next, let  $\varphi: U \rightarrow \mathcal M$ be a local parametrization of $\mathcal M$. We denote by
\begin{displaymath}
h_{k \ell}(\theta):= \varphi_{\theta_k}(\theta)\cdot \varphi_{\theta_{\ell}}(\theta), \quad k,\ell=1,2,
\end{displaymath}
the coefficents of the first fundamental form and set $(h^{k \ell}(\theta))=(h_{k \ell}(\theta))^{-1}$. The mapping  $\varphi$ induces the local parametrization $X(\cdot,t):=x(\cdot,t)  \circ \varphi: U \rightarrow \mathbb R^3$ 
 of $\Gamma(t)$ with first fundamental form 
 \begin{displaymath}
 g_{k \ell}(\theta,t)= X_{\theta_k}(\theta,t) \cdot X_{\theta_\ell}(\theta,t), \quad k,\ell =1,2.
 \end{displaymath}
 We can express $\underline D_i x$ in terms of the derivatives of $X$ via
\begin{equation} \label{eq:tanggrad}
\underline D_i x  \circ \varphi= h^{k \ell}  \varphi_{\theta_k}^i \, X_{\theta_\ell}, \quad 1 \leq i \leq 3.
\end{equation}
 The above formula is useful in proving the following result.

\begin{lem} With the notation introduced above we have on $U$:
\begin{eqnarray}
G_{ij} \circ \varphi & = & h^{rs} h^{pq}  g_{rp} \varphi^i_{\theta_s} \varphi^j_{\theta_q} + (\mu_i \mu_j) \circ \varphi; \quad i,j=1,2,3;  \label{eq:Gij} \\
G^{ij} \circ \varphi  & = & g^{k \ell} \varphi^i_{\theta_k} \varphi^j_{\theta_\ell} + (\mu_i  \mu_j) \circ \varphi; \quad i,j=1,2,3; \label{eq:Gijinv} \\
\mbox{det} \, G \circ \varphi & = & \frac{\mbox{det} \, (g_{ij})}{\mbox{det}\, (h_{ij})}. \label{eq:detG}
\end{eqnarray}
Furthermore, we have on $\mathcal M$:
\begin{align}
& G^{-1} \nabla_{\mathcal M} x : \nabla_{\mathcal M} x  = 2, \quad  G \mu  =  \mu;  , \label{eq:trace} \\[2mm]
& \Delta_{\Gamma} \mbox{id} \circ x  =  \frac{1}{\sqrt{\dg}} \, \nabla_{\mathcal M} \cdot  \bigl( G^{-1} \nabla_{\mathcal M} x \,  \sqrt{\dg} \bigr);   \label{eq:mcx} \\[2mm]
& \partial_t \sqrt{\dg} = G^{-1} \nabla_{\mathcal M} x : \nabla_{\mathcal M} x_t \sqrt{\dg}. \label{eq:dtdet}
\end{align}
Here $A : B$ denotes the Frobenius inner product of $A,B \in \mathbb R^{3 \times 3}$. 
\end{lem}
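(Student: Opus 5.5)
The plan is to work pointwise in a local chart, using the frame $\varphi_{\theta_1},\varphi_{\theta_2},\mu$ of $\mathbb R^3$ at each point of $\mathcal M$.

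\textbf{Formula for $G_{ij}$.} Inserting \eqref{eq:tanggrad} into \eqref{eq:defG} gives
\[
\underline D_i x\cdot \underline D_j x = h^{rs}h^{pq}\varphi^i_{\theta_s}\varphi^j_{\theta_q}\, X_{\theta_r}\cdot X_{\theta_p},
\]
and $X_{\theta_r}\cdot X_{\theta_p}=g_{rp}$, which is \eqref{eq:Gij}. In matrix form this reads
\[
G\circ\varphi = J h^{-1} g h^{-1} J^T + \mu\otimes\mu ,
\]
where $J=(\varphi^i_{\theta_k})\in\mathbb R^{3\times 2}$ satisfies $J^TJ=h$ and $J^T\mu=0$.

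\textbf{Inverse and determinant.} For \eqref{eq:Gijinv} I will multiply the candidate $A = J g^{-1} J^T+\mu\otimes\mu$ by $G$. Using $J^TJ=h$ and $J^T\mu=0$,
\[
G A = J h^{-1} g h^{-1} h g^{-1} J^T + \mu\otimes\mu = J h^{-1}J^T+\mu\otimes\mu .
\]
This is the identity, because $Jh^{-1}J^T$ is the orthogonal projection onto $T_p\mathcal M$. For \eqref{eq:detG} I will write $G = Q\,\mathrm{diag}(h^{-1}gh^{-1},1)\,Q^T$ with $Q=[J\,|\,\mu]$. Since $\det Q^TQ=\det h$, this gives $\det G=\det h\cdot \det g/(\det h)^2$.

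\textbf{Identities \eqref{eq:trace}.} The relation $G\mu=\mu$ is immediate from $J^T\mu=0$ and $|\mu|=1$. For the trace, write $\nabla_{\mathcal M}x\circ\varphi = J h^{-1} X_\theta^T$, with $X_\theta=(X_{\theta_1},X_{\theta_2})$ the $3\times2$ matrix of derivatives. Then
\[
G^{-1}\nabla_{\mathcal M}x:\nabla_{\mathcal M}x = \mathrm{tr}\bigl( X_\theta h^{-1}J^T (Jg^{-1}J^T)Jh^{-1}X_\theta^T\bigr) = \mathrm{tr}(X_\theta g^{-1}X_\theta^T) = \mathrm{tr}(g^{-1}g)=2 .
\]
\textbf{Time derivative \eqref{eq:dtdet}.} Using \eqref{eq:detG}, $\partial_t \sqrt{\det G} = \tfrac12 g^{k\ell}\partial_t g_{k\ell}\sqrt{\det G}$ with $\partial_t g_{k\ell}=2X_{\theta_k}\cdot X_{t,\theta_\ell}$ by symmetry of $g^{k\ell}$. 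The same trace computation as above, with $X_\theta$ replaced by $X_{t,\theta}$ in one slot, gives $G^{-1}\nabla_{\mathcal M}x:\nabla_{\mathcal M}x_t = g^{k\ell}X_{\theta_k}\cdot X_{t,\theta_\ell}$, which matches.

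\textbf{Main obstacle: \eqref{eq:mcx}.} This step requires converting the tangential divergence on $\mathcal M$ into the chart. I plan to argue weakly rather than with Christoffel symbols. Take a test function $\eta\in C^1(\mathcal M)$ supported in $\varphi(U)$ and integrate by parts on $\mathcal M$. Because $G^{-1}\nabla_{\mathcal M}x$ has rows tangential to $\mathcal M$, since $\nabla_{\mathcal M}x = J(\cdots)$ and $G^{-1}J=Jg^{-1}h$, no curvature term arises. This gives
\[
\int_{\mathcal M}\nabla_{\mathcal M}\cdot\bigl(G^{-1}\nabla_{\mathcal M}x\sqrt{\det G}\bigr)\eta
= -\int_{\mathcal M} G^{-1}\nabla_{\mathcal M}x:\nabla_{\mathcal M}\eta\,\sqrt{\det G} .
\]
In the chart, by the computation above, $G^{-1}\nabla_{\mathcal M}x:\nabla_{\mathcal M}\eta = g^{k\ell}X_{\theta_k}\eta_{\theta_\ell}$, where $\eta$ denotes $\eta\circ\varphi$. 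Moreover $\sqrt{\det G}\,\sqrt{\det h}\,d\theta=\sqrt{\det g}\,d\theta$. The right-hand side is therefore
\[
-\int_U g^{k\ell}X_{\theta_k}\eta_{\theta_\ell}\sqrt{\det g}\,d\theta = \int_U \Delta_{\Gamma}\mathrm{id}\circ X\;\eta\,\sqrt{\det g}\,d\theta ,
\]
by the local formula for the Laplace--Beltrami operator. Rewriting this as $\int_{\mathcal M}(\Delta_\Gamma \mathrm{id}\circ x)\,\eta\sqrt{\det G}$ and using that $\eta$ is arbitrary gives \eqref{eq:mcx}.
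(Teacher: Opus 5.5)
Your proposal is correct. For \eqref{eq:Gij}, \eqref{eq:Gijinv} and \eqref{eq:trace} it is the paper's argument in matrix notation. The paper likewise reduces the product $GA$ to $h^{rs}\varphi^i_{\theta_s}\varphi^m_{\theta_r}+\mu_i\mu_m$ and checks it on the frame $\varphi_{\theta_1},\varphi_{\theta_2},\mu$.

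The routes differ elsewhere:
\begin{itemize}
\item \textbf{Determinant.} For \eqref{eq:detG} the paper simply cites \cite{EF17}. Your factorization $G=Q\,\mathrm{diag}(h^{-1}gh^{-1},1)\,Q^T$ with $\det(Q^TQ)=\det h$ makes the lemma self-contained.
\item \textbf{Time derivative.} For \eqref{eq:dtdet} the paper applies Jacobi's formula directly to the $3\times 3$ matrix: $\partial_t\sqrt{\det G}=\frac12\,\mathrm{trace}(G^{-1}\partial_t G)\sqrt{\det G}$. It uses only that $\mu\otimes\mu$ is time independent, so it needs neither a chart nor \eqref{eq:detG}. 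Your chart route also works, but the identity $\partial_t g_{k\ell}=2X_{\theta_k}\cdot X_{t,\theta_\ell}$ holds only after contraction with the symmetric $g^{k\ell}$.
\item \textbf{The identity \eqref{eq:mcx}.} This is the genuinely different step. The paper computes $\underline D_i\bigl(G^{ij}\underline D_j x\sqrt{\det G}\bigr)$ pointwise in the chart, starting from $(G^{ij}\underline D_j x)\circ\varphi=g^{k\ell}X_{\theta_\ell}\varphi^i_{\theta_k}$. Written out, that computation differentiates $\varphi^i_{\theta_k}$ and needs the identity $h^{mn}\varphi_{\theta_m}\cdot\varphi_{\theta_k\theta_n}=\partial_{\theta_k}\log\sqrt{\det h}$. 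Your weak argument never differentiates $\varphi$ twice: you integrate by parts on $\mathcal M$, pull back with $\sqrt{\det G}\,do=\sqrt{\det g}\,d\theta$, and integrate by parts in $U$.
\end{itemize}

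What your route needs instead is twofold:
\begin{itemize}
\item The integration by parts formula on $\mathcal M$, whose curvature term drops because $\mu^T G^{-1}\nabla_{\mathcal M}x=0$. Note that it is the columns of $G^{-1}\nabla_{\mathcal M}x$ that are tangent to $\mathcal M$, not the rows; the rows $G^{ik}\underline D_k x$ are tangent to $\Gamma$. Your justification $G^{-1}\nabla_{\mathcal M}x=J(\cdots)$ establishes exactly the column statement, so only the wording is off.
\item Continuity of both sides, to pass from the integrated identity to the pointwise one. This is harmless for smooth $x$.
\end{itemize}
A by-product of your route is that it directly yields the integrated identity underlying the weak formulation \eqref{eq:weak}.
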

\begin{proof} The relation \eqref{eq:Gij} follows immediately from \eqref{eq:tanggrad}. Next, \eqref{eq:Gij} together with the relation $\varphi^j_{\theta_\ell} \mu_j \circ \varphi=0$ implies
\begin{eqnarray*}
\lefteqn{  \hspace{-3cm} 
G_{ij} \circ \varphi \bigl( g^{k \ell} \varphi^j_{\theta_k} \varphi^m_{\theta_\ell} + (\mu_j  \mu_m) \circ \varphi \bigr) = \bigl( h^{rs} h^{pq}  g_{rp} \varphi^i_{\theta_s} \varphi^j_{\theta_q} + (\mu_i  \mu_j) \circ \varphi \bigr)
 \bigl( g^{k \ell} \varphi^j_{\theta_k} \varphi^m_{\theta_\ell} + (\mu_j  \mu_m) \circ \varphi \bigr) }  \\
& = & h^{rs} \varphi^i_{\theta_s} \varphi^m_{\theta_r} + (\mu_i  \mu_m) \circ \varphi= :a_{im}, \; i,m=1,2,3.
\end{eqnarray*}
It is easily seen that $a_{im} \varphi^m_{\theta_k}=\varphi^i_{\theta_k}, k=1,2$ and $a_{im} \mu_m=\mu_i$ so that $a_{im}=\delta_{im}$ and hence \eqref{eq:Gijinv} follows, while
 \eqref{eq:detG} is shown in \cite[(5.11)]{EF17}. Furthermore, using \eqref{eq:tanggrad} and \eqref{eq:Gijinv} we have
\begin{equation} \label{eq:Gijx}
(G^{ij} \underline D_j x) \circ \varphi = g^{k \ell} X_{\theta_{\ell}} \varphi^i_{\theta_k}, \quad i=1,2,3
\end{equation}
and therefore
\begin{displaymath}
\bigl( G^{-1} \nabla_{\mathcal M} x : \nabla_{\mathcal M} x \bigr) \circ \varphi  = \bigl( G^{ij} \underline D_j x \cdot \underline D_i x \bigr)  \circ \varphi = g^{k \ell} \varphi^i_{\theta_k} h^{rs} \varphi^i_{\theta_s}    X_{\theta_{\ell}} \cdot X_{\theta_r} =
g^{k \ell} g_{\ell r} h^{rs} h_{ks} = h^{ks} h_{ks}=2.
\end{displaymath}
Next,  the relation $G \mu = \mu$ is a direct consequence of \eqref{eq:Gij}, while a long but straightforward calculation starting from \eqref{eq:Gijx}
yields
\begin{displaymath}
\frac{1}{\sqrt{\mbox{det}G}} \, \underline D_i \bigl( G^{ij} \, \underline D_j x \,  \sqrt{\mbox{det} G} \bigr) \circ \varphi = \frac{1}{\sqrt{\mbox{det} g}} \bigl( g^{k \ell} ( x \circ \varphi)_{\theta_\ell} \sqrt{\mbox{det} g} \bigr)_{\theta_k} = \bigl( \Delta_\Gamma \mbox{id} \circ x \bigr) \circ \varphi,
\end{displaymath}
so that \eqref{eq:mcx} follows. Finally, a well--known formula for the derivative of the determinant yields
\begin{displaymath}
\partial_t \sqrt{\dg} = \frac{1}{2} \mbox{trace} \bigl( G^{-1} \partial_t G \bigr) \sqrt{\dg} =  G^{ij} \underline D_j x \cdot \underline D_i x_t \sqrt{\dg} = G^{-1} \nabla_{\mathcal M} x : \nabla_{\mathcal M} x_t \sqrt{\dg},
\end{displaymath}
where we also used the symmetry of $G^{-1}$. 
 \end{proof}
 
 \vspace{2mm}

\begin{lem} \label{lem:Gprop} Let $v_j:= \underline D_{j+1} x \wedge \underline D_{j+2} x, j=1,2,3$, where indices are taken modulo 3. Then
$\nu: \mathcal M \times [0,T] \rightarrow \mathbb R^3$ with
\begin{equation} \label{eq:defnu}
\nu := \frac{1}{\sqrt{\dg}} \mu_j v_j
\end{equation}
defines a unit normal field to $\Gamma(t)$. Furthermore, we have 
\begin{equation} \label{eq:form1} 
G^{ik} G^{\ell j} \underline D_j x \wedge \underline D_k x  =  (-1)^{i+\ell} \frac{1}{\sqrt{\mbox{det} \, G}} \mu_{\sigma(i,\ell)} \nu, \; 1 \leq i < \ell \leq 3,
\end{equation}
 where $\sigma(1,2)=3, \sigma(1,3)=2, \sigma(2,3)=1$. 
\end{lem}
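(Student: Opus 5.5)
Everything is done in a local parametrization $\varphi:U\to\mathcal M$, using \eqref{eq:tanggrad}, \eqref{eq:Gijx} and the formulas of the previous lemma.

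\textbf{Step 1: $\nu$ is normal.} By \eqref{eq:tanggrad},
\[
\underline D_a x\circ\varphi = h^{k\ell}\varphi^a_{\theta_k}X_{\theta_\ell}.
\]
Every $\underline D_a x$ is a combination of $X_{\theta_1},X_{\theta_2}$, so every $v_j$ is parallel to $X_{\theta_1}\wedge X_{\theta_2}$. Expanding the wedge product,
\[
v_j\circ\varphi=\bigl(h^{k1}h^{\ell 2}-h^{k2}h^{\ell1}\bigr)\varphi^{j+1}_{\theta_k}\varphi^{j+2}_{\theta_\ell}\, X_{\theta_1}\wedge X_{\theta_2}
=\det(h^{k\ell})\,(\varphi_{\theta_1}\wedge\varphi_{\theta_2})_j\, X_{\theta_1}\wedge X_{\theta_2}.
\]
Now use $\det(h^{k\ell})=1/\det h$ and $\varphi_{\theta_1}\wedge\varphi_{\theta_2}=\pm\sqrt{\det h}\,\mu\circ\varphi$ (fix the orientation so the sign is $+$). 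Since $|\mu|=1$, this gives
\[
\mu_j v_j\circ\varphi=\frac{1}{\sqrt{\det h}}\,X_{\theta_1}\wedge X_{\theta_2},
\]
whose length is $\sqrt{\det g}/\sqrt{\det h}=\sqrt{\det G}\circ\varphi$ by \eqref{eq:detG}. Hence $\nu$ is a unit vector orthogonal to $X_{\theta_1},X_{\theta_2}$, i.e.\ a unit normal, and
\[
\nu\circ\varphi = \frac{X_{\theta_1}\wedge X_{\theta_2}}{\sqrt{\det g}}.
\]

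\textbf{Step 2: rewrite the left side of \eqref{eq:form1}.} By \eqref{eq:Gijx},
\[
G^{ik}G^{\ell j}\,\underline D_j x\wedge\underline D_k x
=\bigl(G^{\ell j}\underline D_j x\bigr)\wedge\bigl(G^{ik}\underline D_k x\bigr)
=g^{ab}g^{cd}\,\varphi^\ell_{\theta_a}\varphi^i_{\theta_c}\, X_{\theta_b}\wedge X_{\theta_d}.
\]
The same antisymmetrization as in Step 1 gives
\[
\det(g^{ab})\,(\varphi^\ell_{\theta_1}\varphi^i_{\theta_2}-\varphi^\ell_{\theta_2}\varphi^i_{\theta_1})\, X_{\theta_1}\wedge X_{\theta_2}.
\]

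\textbf{Step 3: identify the coefficient.} The bracket is $-(\varphi_{\theta_1}\wedge\varphi_{\theta_2})_{\sigma(i,\ell)}$ up to the cyclic sign. For $(i,\ell)=(1,2)$ the bracket is $-(\varphi_{\theta_1}\wedge\varphi_{\theta_2})_3$. For $(1,3)$ it is $+(\varphi_{\theta_1}\wedge\varphi_{\theta_2})_2$. For $(2,3)$ it is $-(\varphi_{\theta_1}\wedge\varphi_{\theta_2})_1$. This matches the pattern $(-1)^{i+\ell}$ up to an overall sign convention that I must check against the order $\underline D_j x\wedge\underline D_k x$; I expect it to come out as stated once the factors are ordered correctly. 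Substituting $(\varphi_{\theta_1}\wedge\varphi_{\theta_2})_m=\sqrt{\det h}\,\mu_m$ and $X_{\theta_1}\wedge X_{\theta_2}=\sqrt{\det g}\,\nu$, the left side becomes
\[
\pm\frac{\sqrt{\det h}\sqrt{\det g}}{\det g}\,\mu_{\sigma(i,\ell)}\,\nu
=\pm\frac{1}{\sqrt{\det G}}\,\mu_{\sigma(i,\ell)}\,\nu,
\]
again using \eqref{eq:detG}.

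\textbf{Main obstacle.} The only delicate point is the sign bookkeeping: the orientation choice relating $\varphi_{\theta_1}\wedge\varphi_{\theta_2}$ to $\mu$, and the order of the wedge factors in Step 2. I would verify it carefully in the single case $(i,\ell)=(1,2)$ and obtain the other two cases by cyclic symmetry.
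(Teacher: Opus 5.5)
Your proposal is correct and follows the paper's appendix proof essentially line by line: a local chart, \eqref{eq:tanggrad} and \eqref{eq:Gijx}, antisymmetrization producing $\det(h^{k\ell})$ resp.\ $\det(g^{ab})$, and then \eqref{eq:detG}. Your sign worry is unfounded, so you can replace $\pm$ by $(-1)^{i+\ell}$: Step 2 already keeps the order $\underline D_j x \wedge \underline D_k x$, your three computed signs $-,+,-$ are exactly $(-1)^{i+\ell}$, and the orientation choice in Step 1 is harmless because reversing it flips both $\varphi_{\theta_1}\wedge\varphi_{\theta_2}$ relative to $\mu$ and $X_{\theta_1}\wedge X_{\theta_2}$ relative to $\nu$.
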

\begin{proof} The proof is given in the appendix.
\end{proof}

\section{Weak formulation and finite element approximation} \label{sec:3}
\setcounter{equation}{0}

Let us return to the evolution law \eqref{eq:mcf} and suppose that \eqref{eq:repar}
has a smooth solution $x: \mathcal M \times [0,T] \rightarrow \mathbb R^3$.  Recalling that $\Delta_{\Gamma} \mbox{id} \circ x  = H \nu$ we infer from \eqref{eq:repar} that 
\begin{displaymath}
x_t \cdot \nu = \Delta_{\Gamma} \mbox{id} \circ  x \cdot \nu, \quad P x_t = \frac{1}{\alpha} P \Delta_{\mathcal M} x, \quad \mbox{ where } P=I_3 - \nu \otimes \nu.
\end{displaymath}
Splitting the time derivative in a similar way as in \cite[Section 2]{EF17} and using \eqref{eq:mcx} it is easily checked that   \eqref{eq:repar} can be equivalently written as
\begin{eqnarray}
\lefteqn{
\frac{\alpha+1}{\sqrt{\dg}} x_t + (1- \frac{\alpha+1}{\sqrt{\dg}}) (x_t \cdot \nu) \nu } \label{eq:repar1} \\
& = &  \frac{1}{\sqrt{\dg}} \, \nabla_{\mathcal M} \cdot  \bigl( G^{-1} \nabla_{\mathcal M} x \,  \sqrt{\dg} \bigr) +   \frac{1}{\sqrt{\dg}}
\frac{\alpha+1}{\alpha}  P \Delta_{\mathcal M} x. \nonumber
\end{eqnarray}
Here, $G, \nu$ are given by \eqref{eq:Gij} and \eqref{eq:defnu} respectively.
Note that \eqref{eq:repar1} slightly differs from \cite[(6.4)]{EF17} in that the second term on the right hand side is multiplied by $\frac{\alpha+1}{\alpha}$. The fact that this factor is larger than 1 will be exploited in
the error analysis. \\
\noindent
Multiplying  \eqref{eq:repar1} by $\chi  \sqrt{\dg}$, where $\chi \in H^1(\mathcal M; \mathbb R^3) $, and integrating by parts
on $\mathcal M$ we derive the following weak formulation:
\begin{eqnarray}
\lefteqn{ \hspace{-2cm} 
\int_{\mathcal M} A(\nu, \sqrt{\dg}) x_t \cdot \chi  \, do + \int_{\mathcal M} G^{-1} \nabla_{\mathcal M} x :  \nabla_{\mathcal M} \chi \sqrt{\dg} \, do } \nonumber \\
& &  +  \frac{\alpha+1}{\alpha}  \int_{\mathcal M} 
\nabla_{\mathcal M} x : \nabla_{\mathcal M} \bigl( P \chi \bigr) \, do =0 \qquad \forall \chi \in H^1(\mathcal M;\mathbb R^3). \label{eq:weak}
\end{eqnarray}
In the above, $A(w,\rho) \in \mathbb R^{3 \times 3}$ is given by
\begin{equation} \label{eq:defH}
A(w,\rho):= (\alpha+1) I_3 +(\rho - \alpha -1) w \otimes w, \quad w \in \mathbb R^3, \rho>0.
\end{equation}
Note that $A(w,\rho)$ is symmetric and positive definite with
\begin{equation} \label{eq:Hpos}
A(w,\rho) \xi \cdot \xi = (\alpha+1) (| \xi |^2 - (\xi \cdot w)^2) + \rho (\xi \cdot w)^2 \geq \min (1, \rho) | \xi |^2 \quad \mbox{ for all } \xi \in \mathbb R^3.  
\end{equation}

\noindent
In order to discretize in space we fix  $k \in \mathbb N$ and denote by  
$(S^k_h)_{0<h \leq h_0} \subset W^{1,\infty}(\mathcal M)$  a sequence of finite--dimensional spaces, for which we make the following assumptions: \\[2mm]
{\bf (A1)}: There exists an interpolation operator $I^k_h: C^0(\mathcal M) \rightarrow S^k_h$ such that
for $2 \leq r \leq \infty$:
\begin{equation}  \label{eq:interpol} 
\Vert f - I^k_h f \Vert_{L^r}  + h\Vert \nabla_{\mathcal M} (f - I^k_h f) \Vert_{L^r}  \leq  \hat c h^{k+1} \Vert f \Vert_{W^{k+1,r}}  \quad   \forall \, f \in W^{k+1,r}(\mathcal M).
\end{equation}
{\bf (A2)} For $2 \leq r \leq \infty$ there holds:
\begin{equation}  \label{eq:inverse}
\Vert \chi_h \Vert_{L^r} \leq \hat c h^{\frac{2}{r}-1} \Vert \chi_h \Vert, \; \Vert \nabla_{\mathcal M}  \chi_h \Vert_{L^r}   \leq     \hat c h^{-1} \Vert \chi_h \Vert_{L^r} \quad \quad  \forall \chi_h \in S^k_h.
\end{equation} 
Possible choices of $S^k_h$ and $I^k_h$  will be given in Section \ref{sec:5}. Note that for a mapping $x_h \in (S^k_h)^3$ with $\dgh > 0$ the associate normal 
\begin{equation}
\displaystyle \nu_h=\frac{1}{\sqrt{\dgh}} \,  \mu_j \underline D_{j+1} x_h \wedge \underline D_{j+2} x_h
\label{nueq}
\end{equation}
will in general only belong to $L^\infty(\mathcal M)^3$ so that we cannot
expect the components of  $I_3 - \nu_h \otimes \nu_h$ to have a weak derivative which would be required for the third term on the left hand side of \eqref{eq:weak}. In order to
deal with this issue we make the following final assumption: \\[2mm]
{\bf (A3)} Given $x_h \in (S^k_h)^3, x \in W^{k+1,\infty}(\mathcal M)^3$ with $\dg, \dgh \geq c_0, | \nabla_{\mathcal M} x_h | \leq c_1$ there exists $\hat \nu_h \in (S^k_h)^3$ such that
\begin{equation}  \label{eq:tnuh}
\Vert \hat \nu_h - \nu \Vert \leq c \bigl( \Vert \nabla_{\mathcal M} (x_h - x) \Vert + h^k \bigr), \qquad \nu=\frac{1}{\sqrt{\dg}} \,  \mu_j \underline D_{j+1} x \wedge \underline D_{j+2} x,
\end{equation}
where the constant $c$ only depends on $c_0,c_1, \Vert x \Vert_{W^{k+1,\infty}}$ and the constant $\hat c$ from {\bf (A1), (A2)}. \\[2mm]
This assumption is tailored to the error analysis and will be verified for the case $k=2$ in Section \ref{sec:5}. 
Note that $P_h:=I_3 - \hat \nu_h \otimes \hat \nu_h \in W^{1,\infty}(\mathcal M,\mathbb R^{3 \times 3})$. \\[2mm]
Our semidiscrete problem reads: find $x_h: \mathcal M \times [0,T] \rightarrow \mathbb R^3$ such that  $x_h(\cdot,t) \in (S^k_h)^3$ for all $t \in [0,T]$,  $x_h(\cdot,0)=I^k_h x_0$ and
\begin{eqnarray} 
\lefteqn{ \hspace{-2cm}
\int_{\mathcal M} A(\nu_h,\sqrt{\dgh}) x_{h,t} \cdot \chi_h \, do + \int_{\mathcal M} G_h^{-1} \nabla_{\mathcal M}  x_h : \nabla_{\mathcal M} \chi_h \sqrt{\dgh} \, do }
\nonumber  \\
& &  + \frac{\alpha+1}{\alpha}  \int_{\mathcal M}  \nabla_{\mathcal M} x_h : \nabla_{\mathcal M}  \bigl( P_h \chi_h \bigr) \, do =0  \qquad \forall \chi_h \in (S^k_h)^3. \label{eq:weakdisc}
\end{eqnarray}
Here, $G_h$ and $P_h$ are given by 
\begin{displaymath}
G_{h,ij}  =   \underline D_i x_h \cdot \underline D_j x_h + \mu_i \mu_j, \quad i,j=1,2,3; \quad 
P_h  =  I_3 - \hat \nu_h \otimes \hat \nu_h
\end{displaymath}
with $\hat \nu_h$ as in {\bf (A3)}.
Our main result reads as follows:
\begin{thm} \label{thm:main}
Let $k \geq 2$ and suppose that \eqref{eq:repar} has a smooth solution $x: \mathcal M \times [0,T] \rightarrow \mathbb R^3$.
 Then there exists $h_0>0$ such that for all $0<h \leq h_0$ the semi--discrete
problem \eqref{eq:weakdisc} has a unique solution $x_h: \mathcal M \times [0,T] \rightarrow \mathbb R^3$ and the following error estimates hold:
\begin{displaymath}
\int_0^T \Vert x_{h,t} - x_t \Vert^2 dt + \max_{0 \leq t \leq T} \Vert (x_h-x)(t) \Vert_{H^1}^2 \leq c h^{2k}.
\end{displaymath}
\end{thm}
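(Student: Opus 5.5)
We use a standard continuation (bootstrap) argument together with an energy estimate obtained by testing the error equation with the time derivative of the error.

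\textbf{Setup.} Since the matrix $A(\nu_h,\sqrt{\dgh})$ is positive definite by \eqref{eq:Hpos} as long as $\dgh\ge c_0>0$, the system \eqref{eq:weakdisc} is a nonlinear ODE system for the coefficients of $x_h$, with locally Lipschitz right-hand side. Hence a unique local solution exists on some interval $[0,T_h)$. We define $T^*_h\le T$ as the maximal time for which
\begin{displaymath}
\Vert \nabla_{\mathcal M}(x_h - I^k_h x)(t)\Vert \le h^{k-\frac12}
\end{displaymath}
holds on $[0,T^*_h]$. Combining this with the inverse estimate \eqref{eq:inverse} for $r=\infty$ and the interpolation estimate \eqref{eq:interpol} gives
\begin{displaymath}
\Vert \nabla_{\mathcal M}(x_h-x)\Vert_{L^\infty}\le c h^{k-\frac32}+ch^k\to 0
\end{displaymath}
for $k\ge 2$. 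This is exactly where $k\ge2$ enters. As a consequence, $\dgh\ge c_0/2$, $|\nabla_{\mathcal M}x_h|\le c_1$, $|\nu_h-\nu|$ is small, and the matrices $G_h^{-1}$ and $A(\nu_h,\sqrt{\dgh})$ are uniformly bounded and coercive on $[0,T^*_h]$. This also makes {\bf (A3)} applicable.

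\textbf{Error equation.} Split the error as $x_h-x=(x_h-I^k_hx)+(I^k_hx-x)=:e_h+\rho_h$. Subtracting \eqref{eq:weak} with $\chi=\chi_h$ from \eqref{eq:weakdisc} and inserting $I^k_hx$ produces the following consistency terms:
\begin{itemize}
\item $\rho_h$-terms, bounded by $ch^k\Vert\chi_h\Vert_{H^1}$, or by $ch^{k}\Vert\chi_h\Vert$ after using the time derivative of \eqref{eq:interpol};
\item nonlinear difference terms of the form
\begin{displaymath}
\bigl(G_h^{-1}\nabla x_h\sqrt{\dgh}-G^{-1}\nabla x\sqrt{\dg}\bigr):\nabla\chi_h,\qquad \bigl(A(\nu_h,\cdot)-A(\nu,\cdot)\bigr)x_t\cdot\chi_h;
\end{displaymath}
\item the projection term containing $P_h-P$, controlled through \eqref{eq:tnuh}.
\end{itemize}

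We then test with $\chi_h=e_{h,t}$. The first term gives $\int A_h e_{h,t}\cdot e_{h,t}\ge c\Vert e_{h,t}\Vert^2$.

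The key structural step is to Taylor-expand the map $F\mapsto G(F)^{-1}F\sqrt{\det G(F)}$ at $F=\nabla_{\mathcal M}x$. This yields the bilinear form
\begin{displaymath}
\int B(\nabla x)\nabla e_h:\nabla e_{h,t} = \tfrac12\tfrac{d}{dt}\int B\nabla e_h:\nabla e_h-\tfrac12\int \partial_tB\,\nabla e_h:\nabla e_h,
\end{displaymath}
where $B$ is the linearized (symmetric, since $F\mapsto G^{-1}F\sqrt{\det G}$ is the gradient of the area functional) tensor. The quadratic remainder is bounded by $\Vert\nabla e_h\Vert_{L^\infty}\Vert\nabla e_h\Vert\Vert\nabla e_{h,t}\Vert$, which we handle by integration by parts in time.

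The term $\frac{\alpha+1}{\alpha}\int\nabla e_h:\nabla(P_he_{h,t})$ contributes
\begin{displaymath}
\frac{\alpha+1}{\alpha}\tfrac12\tfrac{d}{dt}\Vert P\nabla e_h\Vert^2
\end{displaymath}
up to lower-order terms, where $\nabla P_h$ is bounded since $\hat\nu_h$ is $W^{1,\infty}$-close to $\nu$.

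\textbf{Main obstacle: coercivity.} The linearized area form $B$ is only degenerate elliptic. It controls the tangential components of $\nabla e_h$ but not the normal ones (it is even degenerate in some tangential directions, such as rotations). We must show that
\begin{displaymath}
\int B\nabla e:\nabla e+\frac{\alpha+1}{\alpha}\int |\nabla_{\mathcal M}(Pe)|^2 \ge c\Vert\nabla e\Vert^2-C\Vert e\Vert^2 .
\end{displaymath}
We would try to establish this pointwise. Decompose $\nabla e$ into its $\nu$-component and its tangential part. The area form equals $|G^{-1/2}\nabla e\,\nu|^2\sqrt{\dg}$ plus a tangential quadratic form whose negative part is at most $|P\nabla e|^2$ times a constant. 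The factor $\frac{\alpha+1}{\alpha}>1$ is designed precisely to dominate this negative part. Lemma~\ref{lem:Gprop} and \eqref{eq:trace} would be used to compute $B$ explicitly.

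\textbf{Closing the argument.} With coercivity in hand, we integrate in time and apply Young's and Gronwall's inequalities, using $e_h(0)=0$ and $\Vert e_h\Vert\le\int_0^t\Vert e_{h,t}\Vert$. This gives
\begin{displaymath}
\int_0^t\Vert e_{h,t}\Vert^2+\Vert e_h(t)\Vert_{H^1}^2\le ch^{2k}\quad\text{on }[0,T^*_h].
\end{displaymath}
Since $ch^k<h^{k-\frac12}$ for $h$ small, the bootstrap assumption cannot be saturated, so $T^*_h=T$. Adding the interpolation error $\rho_h$ then yields the estimate stated in Theorem~\ref{thm:main}.
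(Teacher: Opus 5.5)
\textbf{There is a genuine gap, at $k=2$.}

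Your overall architecture matches the second half of the paper's proof: the bootstrap, testing with $e_{h,t}$, using the gradient structure of the area integrand, and pointwise coercivity with the tangential defect absorbed by $\frac{\alpha+1}{\alpha}$. As written, however, it does not close for $k=2$, which is the only case where \textbf{(A3)} is verified.

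When you test with $e_{h,t}$, the projection term contains $\frac{\alpha+1}{\alpha}\int_{\mathcal M}\nabla_{\mathcal M}e:(P_h-P)\nabla_{\mathcal M}e_{h,t}\,do$. This term is neither lower order nor a time derivative. You only have the $L^2$ bound \eqref{eq:tnuh} on $\hat\nu_h-\nu$ and $\Vert\nabla_{\mathcal M}e_{h,t}\Vert\le ch^{-1}\Vert e_{h,t}\Vert$. The natural estimate (the one the paper uses) is then $\epsilon\Vert e_{h,t}\Vert^2+c_\epsilon h^{-2}\Vert\nabla_{\mathcal M}e\Vert_{L^\infty}^2(\Vert\nabla_{\mathcal M}e\Vert^2+h^{2k})$. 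Under your bootstrap, $h^{-2}\Vert\nabla_{\mathcal M}e\Vert_{L^\infty}^2\le ch^{2k-5}=ch^{-1}$ for $k=2$, so Gronwall produces a factor $e^{cT/h}$.

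Your justification that $\hat\nu_h$ is $W^{1,\infty}$-close to $\nu$ is also false here. From \eqref{eq:tnuh} and \eqref{eq:inverse} you only get $\Vert\nabla_{\mathcal M}(\hat\nu_h-\nu)\Vert_{L^\infty}\le ch^{-2}\Vert\nabla_{\mathcal M}e\Vert+ch^{k-2}=O(h^{-1/2})$.

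The crude bound you state for the quadratic remainder of the area term has the same $h^{-1/2}$ loss. So within your framework, the integration by parts in time must be realised as an exact identity, as behind \eqref{eq:ghg}.

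Tightening the bootstrap to $\Vert\nabla_{\mathcal M}e_h\Vert\le Mh^k$ does not rescue the argument. The weight becomes $cM^2$, Gronwall gives $c\,e^{cM^2T}h^{2k}$, and $c\,e^{cM^2T}<M^2$ cannot be arranged in general. For $k\ge3$ the weight is $O(h)$, and your pointwise bootstrap would suffice for this term.

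\textbf{The missing idea is the paper's preliminary energy estimate.} First test the error equation with $e_h$ itself. No $\nabla_{\mathcal M}e_{h,t}$ appears, and the same pointwise coercivity gives $\sup_t\Vert e_h\Vert^2+\int_0^{\hat T_h}\Vert\nabla_{\mathcal M}e\Vert^2dt\le c_2h^{2k}$, with $c_2$ depending only on $x$, $T$ and $\hat c$. The inverse estimate then yields $h^{-2}\int_0^{\hat T_h}\Vert\nabla_{\mathcal M}e\Vert_{L^\infty}^2dt\le c_2h^{2k-4}\le c_2$. This is the second, and decisive, place where $k\ge2$ enters. It makes the weight above integrable in time with a bound independent of the bootstrap, so the $e_{h,t}$-estimate closes.

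For this first step, the matrix in front of $e_{h,t}$ must be $A(\nu,\sqrt{\dg})$, so that it can be differentiated in time. The resulting term $(A(\nu,\sqrt{\dg})-A(\nu_h,\sqrt{\dgh}))x_{h,t}\cdot e_h$ is why the paper also puts $\int_0^t\Vert x_{h,t}\Vert_{L^\infty}^2ds\le 2c_1$ into the bootstrap.

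\textbf{Two smaller corrections.}
\begin{itemize}
\item The second variation of area controls the normal components $\underline D_ie\cdot\nu$, through $G^{ik}(\underline D_ie\cdot\nu)(\underline D_ke\cdot\nu)\sqrt{\dg}$. It does not control the tangential ones, as you first state; its tangential part is indefinite.
\item A bound of the form ``at most a constant times $|P\nabla_{\mathcal M}e|^2$'' is not enough. You need the sharp constant $1$ in \eqref{eq:estpe}, since $\frac{\alpha+1}{\alpha}$ can be arbitrarily close to $1$. The paper obtains this from Lemma \ref{lem:Gprop} together with $2|a\wedge b|\le|a|^2+|b|^2$.
\end{itemize}
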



\section{Error analysis} \label{sec:4}
\setcounter{equation}{0}

Since we have assumed that $x: \mathcal M  \times [0,T] \rightarrow \mathbb R^3$ is a smooth embedding there exist $c_0, c_1>0$ such that
\begin{align}
& \sqrt{\dg} \geq c_0, \quad G^{-1} \xi \cdot \xi \geq c_0 | \xi |^2 \quad \forall \xi \in \mathbb R^3 \quad \mbox{ on } \mathcal M \times [0,T]; \label{eq:prop1} \\
& | \nabla_{ \mathcal M} x | \leq c_1 \quad \mbox{ on } \mathcal M \times [0,T]; \quad \int_0^T \Vert x_t \Vert_{L^\infty}^2 dt \leq c_1. \label{eq:prop2}
\end{align}
In what follows we use the abbreviations
\begin{displaymath}
 e:=x_h-x \mbox{ and } e_h:=x_h - I^k_h x
 \end{displaymath}
 and set
\begin{displaymath}
\hat T_h = \sup \Bigl\{  t \in [0,T]   \,  | \, x_h \mbox{ exists on } [0,t],  \Vert \nabla_{\mathcal M} e(s) \Vert  \leq h^{\frac{3}{2}} \mbox{ for } 0 \leq s \leq t,
 \int_0^t \Vert x_{h,t} \Vert_{L^\infty}^2 ds \leq 2c_1  \Bigr\}. 
\end{displaymath}
Clearly, $\hat T_h>0$ and the definition of $\hat T_h$ implies that $x_h$ can be extended to the interval $[0,\hat T_h]$. 
Using \eqref{eq:interpol} and \eqref{eq:inverse} we obtain on $[0,\hat T_h]$
\begin{eqnarray}
\Vert \nabla_{\mathcal M} e \Vert_{L^\infty} & \leq &   \Vert \nabla_{\mathcal M} (x_h - I^k_h x) \Vert_{L^\infty} + \Vert \nabla_{\mathcal M} (x - I^k_h x) \Vert_{L^\infty}  
 \leq  \hat c h^{-1} \Vert \nabla_{\mathcal M} (x_h - I^k_h x) \Vert +  c h^k  \nonumber \\ 
& \leq & \hat c h^{-1} \bigl( \Vert \nabla_{\mathcal M} e \Vert + \Vert \nabla_{\mathcal M} (x - I^k_h x)  \Vert \bigr) +  c h^k
 \leq \hat c h^{\frac{1}{2}}+ c h^{k-1} \leq c h^{\frac{1}{2}},  \label{eq:este1}
\end{eqnarray}
since $ k \geq 2$. Note that  $c$ only depends on $x$ and $\hat c$. In particular we infer from \eqref{eq:prop1} and \eqref{eq:prop2} that there exists $h_0>0$ such that for $0<h \leq h_0$
\begin{equation} \label{eq:estxh1}
\max_{0 \leq t \leq \hat T_h} \Vert \nabla_{\mathcal M} x_h(\cdot,t)  \Vert_{L^\infty} \leq 2c_1, \quad \sqrt{\dgh} \geq \frac{c_0}{2} \mbox{ on } \mathcal M \times [0,\hat T_h].
\end{equation}

\noindent
Taking the difference of \eqref{eq:weakdisc} and \eqref{eq:weak} 
we obtain the following error equation:
\begin{eqnarray}
\lefteqn{ \hspace{-2cm} 
\int_{\mathcal M} A(\nu,\sqrt{\dg}) e_t \cdot \chi_h  \, do + \int_{\mathcal M} \bigl( G_h^{-1} \nabla_{\mathcal M}  x_h \sqrt{\dgh} - G^{-1} \nabla_{\mathcal M}  x \sqrt{\dg} \bigr) : 
\nabla_{\mathcal M}  \chi_h  \, do } \nonumber \\
&  & +  \frac{\alpha+1}{\alpha}  \int_{\mathcal M}  \bigl( \nabla_{\mathcal M}  x_h : \nabla_{\mathcal M}  \bigl( P_h \chi_h \bigr) - \nabla_{\mathcal M}  x : \nabla_{\mathcal M}  ( P \chi_h ) \bigr)  \, do \nonumber \\
& = &   \int_{\mathcal M} \bigl( A(\nu,\sqrt{\dg}) - A(\nu_h,\sqrt{\dgh}) \bigr) x_{h,t} \cdot \chi_h \, do \qquad \forall \chi_h \in (S^k_h)^3. \label{eq:erroreq}  
\end{eqnarray}
In a first step we derive a bound for $\int_0^{\hat T_h} \Vert \nabla_{\mathcal M} e \Vert^2 dt$, which will be used for  a Gronwall argument in the second part of the analysis.

\begin{lem} There exist $h_0>0$ and $c_2$ which only depend on the solution $x$, $T$ and the constant $\hat c$  from {\bf (A1)}, {\bf (A2)} such that for $0 < h \leq h_0$ we have 
\begin{equation} \label{eq:esth1}
\sup_{0 \leq t \leq  \hat T_h} \Vert e_h(t) \Vert^2 + \int_0^{\hat T_h} \Vert \nabla_{\mathcal M} e(t) \Vert^2 dt + h^2 \int_0^{\hat T_h} \Vert \nabla_{\mathcal M} e \Vert_{L^\infty}^2 dt  \leq c_2 h^{2k}.
\end{equation}
\end{lem}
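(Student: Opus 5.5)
Test the error equation \eqref{eq:erroreq} with $\chi_h=e_h=e+(x-I^k_hx)$ and integrate in time; the aim is an $L^2$--type energy estimate. The bound on $\int_0^{\hat T_h}\Vert\nabla_{\mathcal M} e\Vert_{L^\infty}^2dt$ is not proved separately. It follows from the inverse estimate \eqref{eq:inverse} together with \eqref{eq:interpol}, exactly as in \eqref{eq:este1}:
\begin{displaymath}
h^2\Vert\nabla_{\mathcal M} e\Vert_{L^\infty}^2\le c\Vert\nabla_{\mathcal M} e\Vert^2+ch^{2k}.
\end{displaymath}
So it suffices to bound $\sup\Vert e_h\Vert^2+\int\Vert\nabla_{\mathcal M}e\Vert^2$.

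For the time derivative term, write $e_t=e_{h,t}-(x-I^k_hx)_t$. Then use the symmetry of $A(\nu,\sqrt{\dg})$ and its smooth time dependence:
\begin{displaymath}
\int A e_{h,t}\cdot e_h=\tfrac12\tfrac{d}{dt}\int Ae_h\cdot e_h-\tfrac12\int A_te_h\cdot e_h .
\end{displaymath}
By \eqref{eq:Hpos} and \eqref{eq:prop1}, the quantity $\int Ae_h\cdot e_h$ is equivalent to $\Vert e_h\Vert^2$. The interpolation remainder in the time derivative is $O(h^{k+1})$.

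The key step is coercivity of the two elliptic terms. Linearize the map $F\mapsto G(F)^{-1}F\sqrt{\det G(F)}$ at $\nabla_{\mathcal M}x$ and do the same for the projection term. By \eqref{eq:este1} and \eqref{eq:estxh1}, the Taylor remainders are quadratic in $\nabla_{\mathcal M}e$ with $L^\infty$--bounded coefficients. Tested against $\nabla_{\mathcal M}e$, they give $c\Vert\nabla_{\mathcal M}e\Vert_{L^\infty}\Vert\nabla_{\mathcal M}e\Vert^2\le ch^{1/2}\Vert\nabla_{\mathcal M}e\Vert^2$, which can be absorbed.

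The linearized first term is the derivative of the area functional. It is nonnegative but degenerate in tangential directions: on tangential perturbations its quadratic form is $|\,\cdot\,|_{g}^2$ minus a term involving the trace of the perturbation. The second term contributes $\frac{\alpha+1}{\alpha}\int |\nabla_{\mathcal M}e|^2$ minus the normal component $(\nabla_{\mathcal M}e\,\nu)\cdot\ldots$, up to lower-order terms. Here one uses $P_h-P=O(|\hat\nu_h-\nu|)$ from {\bf (A3)}, whose derivative terms are handled after integration by parts or by the inverse estimate. We need the sum of the two quadratic forms to be uniformly positive definite, $\ge \lambda\Vert\nabla_{\mathcal M}e\Vert^2-c\Vert e\Vert^2$. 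This should hold because the area form controls the normal part of $\nabla_{\mathcal M}e$, while the DeTurck term controls the tangential part. The factor $\frac{\alpha+1}{\alpha}>1$ is designed precisely to beat the possibly negative cross terms of the area Hessian, which involve the trace and the skew part of the tangential gradient; Lemma~\ref{lem:Gprop} and \eqref{eq:trace} are used to compute those cross terms explicitly. I expect this pointwise algebraic coercivity to be the main obstacle. I would verify it in a local frame in which $\nabla_{\mathcal M}x$ is an isometry onto the tangent plane.

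The remaining terms are handled as follows. The right-hand side $(A(\nu,\sqrt{\dg})-A(\nu_h,\sqrt{\dgh}))x_{h,t}\cdot e_h$ is bounded by $c\Vert x_{h,t}\Vert_{L^\infty}\Vert\nabla_{\mathcal M}e\Vert\,\Vert e_h\Vert$. Young's inequality turns this into $\epsilon\Vert\nabla_{\mathcal M}e\Vert^2+c\Vert x_{h,t}\Vert_{L^\infty}^2\Vert e_h\Vert^2$, and the factor $\Vert x_{h,t}\Vert_{L^\infty}^2$ is integrable by the definition of $\hat T_h$. The terms tested with $x-I^k_hx$ are bounded by $c\,h^k\Vert\nabla_{\mathcal M}e\Vert+ch^{2k}$. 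The contributions of $\hat\nu_h-\nu$ are bounded via {\bf (A3)} by $c(\Vert\nabla_{\mathcal M}e\Vert+h^k)$ times small factors or $\Vert e_h\Vert$. Finally, Gronwall's inequality with the weight $1+\Vert x_{h,t}\Vert_{L^\infty}^2$, together with $\Vert e_h(0)\Vert=0$, yields \eqref{eq:esth1}.
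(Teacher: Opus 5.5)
Your outline matches the paper's proof: testing with $e_h$, the energy identity for $A(\nu,\sqrt{\dg})$, Taylor remainders absorbed via \eqref{eq:este1}, Gronwall with weight $1+\Vert x_{h,t}\Vert_{L^\infty}^2$, and the inverse estimate for the $L^\infty$ term. However, the step that carries the lemma is only announced, and your heuristic for it is partly wrong. The tangential part of the area Hessian is neither nonnegative nor of the form ``norm minus a trace term''. Using \eqref{eq:decompe} and Binet--Cauchy, the linearised area term tested with $\nabla_{\mathcal M}e$ equals
\[
G^{ik}(\underline D_i e\cdot\nu)(\underline D_k e\cdot\nu)\sqrt{\dg}+G^{ik}G^{j\ell}(\underline D_j x\wedge\underline D_k x)\cdot(\underline D_\ell e\wedge\underline D_i e)\sqrt{\dg}.
\]
The second term is a Jacobian-type ($2\times 2$ determinant) expression in the tangential part of $\nabla_{\mathcal M}e$, and it is indefinite. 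What you need is \eqref{eq:estpe}: its modulus is bounded by $\sum_i|P\underline D_i e|^2$ with constant exactly $1$. This must hold in the same norm that $S_2$ produces with the factor $\frac{\alpha+1}{\alpha}$. The only slack is $\frac{\alpha+1}{\alpha}-1=\frac1\alpha$, so any constant $C>1$ would restrict you to $\alpha<\frac{1}{C-1}$.

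This is where your proposed frame, in which $\nabla_{\mathcal M}x$ is an isometry, is risky. Suppose $\nabla_{\mathcal M}x$ has singular values $\lambda_1,\lambda_2$ and you bound the determinant by the squared norm of the tangential gradient in a $g$-orthonormal frame. The resulting quantity, multiplied by $\sqrt{\dg}$, is comparable to $\sum_i|P\underline D_i e|^2$ only up to the factor $\max(\lambda_1/\lambda_2,\lambda_2/\lambda_1)$, which is not $1$ unless $x(\cdot,t)$ is conformal.

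The missing idea is that the determinant term multiplied by $\sqrt{\dg}$ is independent of $G$. The paper obtains this from \eqref{eq:form1} in Lemma \ref{lem:Gprop}. Take an orthonormal frame $\{v,w,\nu\}$ and set $a_i=P\underline D_ie\cdot v$, $b_i=P\underline D_ie\cdot w$. Then the term equals $\frac{2}{\sqrt{\dg}}\sum_{i<\ell}\mu_{\sigma(i,\ell)}(a\wedge b)_{\sigma(i,\ell)}$. After multiplying by $\sqrt{\dg}$ and using $|\mu|=1$, one gets $2|a\wedge b|\le|a|^2+|b|^2=\sum_i|P\underline D_ie|^2$. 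Adding $S_2\ge\sum_i\int|P\underline D_ie|^2\,do-\ldots$ then leaves exactly the coercive combination in \eqref{eq:h1est}.

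A smaller point concerns $S_1=\int\nabla_{\mathcal M}e:\nabla_{\mathcal M}((P_h-P)e_h)\,do$. Integration by parts is not available there; the paper uses it only for $S_3$, against the smooth $x$. Moreover, {\bf (A3)} alone yields no small factor. The factor $h^{1/6}$ comes from \eqref{eq:tnuh1} with $r=\infty$ and $r=3$, combined with the bootstrap bound $\Vert\nabla_{\mathcal M}e\Vert\le h^{3/2}$ built into the definition of $\hat T_h$.
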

\begin{proof}
Let us begin by inserting $\chi_h = e_h$ into \eqref{eq:erroreq}:
\begin{eqnarray}
\lefteqn{ 
\int_{\mathcal M} A(\nu,\sqrt{\dg}) e_{h,t} \cdot e_h  \, do  + \int_{\mathcal M} \bigl( G_h^{-1}\nabla_{\mathcal M}  x_h \sqrt{\dgh} - G^{-1} \nabla_{\mathcal M}  x \sqrt{\dg} \bigr) : 
\nabla_{\mathcal M} e   \, do }  \label{eq:err1}  \\
& & \frac{\alpha+1}{\alpha}  \int_{\mathcal M}  \bigl( \nabla_{\mathcal M}  x_h : \nabla_{\mathcal M} (P_h e_h) - \nabla_{\mathcal M}   x :  \nabla_{\mathcal M} ( P e_h) \bigr)  \, do \nonumber  \\
& = &  \int_{\mathcal M} A(\nu,\sqrt{\dg}) (x_t - I^k_h x_t)  \cdot e_h \, do +  \int_{\mathcal M} \bigl( A(\nu,\sqrt{\dg}) - A(\nu_h,\sqrt{\dgh}) \bigr) x_{h,t} \cdot e_h  \, do \nonumber \\
& &+  \int_{\mathcal M} \bigl( G_h^{-1} \nabla_{\mathcal M}  x_h \sqrt{\dgh} - G^{-1}  \nabla_{\mathcal M}  x \sqrt{\dg} \bigr) :  \nabla_{\mathcal M}  (I^k_h x - x)  \, do  \equiv :T_1+T_2+T_3. \nonumber
\end{eqnarray}
For the first integral on the left hand side of \eqref{eq:err1} we  derive using the symmetry of $A$ 
\begin{eqnarray}
\lefteqn{ \hspace{-4cm} 
\int_{\mathcal M} A(\nu,\sqrt{\dg}) e_{h,t} \cdot e_h  \, do = \frac{1}{2} \frac{d}{dt} \int_{\mathcal M} A(\nu,\sqrt{\dg})  e_h \cdot e_h  \, do - \frac{1}{2} \int_{\mathcal M} \partial_t A(\nu,\sqrt{\dg})  e_h \cdot e_h  \, do } \nonumber \\
& \geq &  \frac{1}{2} \frac{d}{dt} \int_{\mathcal M} A(\nu,\sqrt{\dg}) e_h \cdot e_h  \, do 
 - c \Vert e_h \Vert^2. \label{eq:e1}
\end{eqnarray}
Next, in order to handle the second integral we use a Taylor expansion argument together with \eqref{eq:este1} and \eqref{eq:estxh1}. Choosing $h_0$ smaller if necessary we derive
\begin{eqnarray}
G_h^{-1} & = & G^{-1} - G^{-1}(G_h-G) G^{-1} + R_h, \qquad  \qquad \quad  | R_h | \leq c | \nabla_{\mathcal M} e |^2, \label{eq:ghinvexp} \\[2mm]
\sqrt{\dgh} & = & \sqrt{\dg} + G^{-1} \nabla_{\mathcal M} x: \nabla_{\mathcal M} e  \sqrt{\dg}+ r_h, \quad | r_h | \leq c | \nabla_{\mathcal M} e |^2, \label{eq:detexp}
\end{eqnarray} 
where $c$ is independent of $h$. Thus we obtain 
\begin{eqnarray*}
\lefteqn{
G_h^{-1}\nabla_{\mathcal M}  x_h \sqrt{\dgh} - G^{-1} \nabla_{\mathcal M}  x \sqrt{\dg} = G^{-1} \nabla_{\mathcal M} e \sqrt{\dg} } \\
  &  &    -G^{-1} (G_h-G) G^{-1} \nabla_{\mathcal M} x \sqrt{\dg} + G^{-1} \nabla_{\mathcal M} x ( G^{-1} \nabla_{\mathcal M} x : \nabla_{\mathcal M} e) \sqrt{\dg} + R^{(1)}_h
 \end{eqnarray*}
 where $| R^{(1)}_h | \leq c | \nabla_{\mathcal M} e |^2$. Taking the Frobenius inner product with $\nabla_{\mathcal M}e $ we derive
 \begin{eqnarray}
 \lefteqn{
 \bigl( G_h^{-1}\nabla_{\mathcal M}  x_h \sqrt{\dgh} - G^{-1} \nabla_{\mathcal M}  x \sqrt{\dg} \bigr) : \nabla_{\mathcal M} e } \label{eq:err2} \\
 & = & G^{ik} \underline D_k e \cdot \underline D_i e \sqrt{\dg} - G^{ik}  \bigl( \underline D_k x_h \cdot \underline D_\ell x_h - \underline D_k x \cdot \underline D_\ell x \bigr) G^{j \ell} (\underline D_j x 
 \cdot \underline D_i e) \sqrt{\dg} \nonumber  \\
 & & + G^{ik} G^{j \ell} (\underline D_j x \cdot \underline D_\ell e) (\underline D_k x \cdot \underline D_i e) \, \sqrt{\dg} + r^{(1)}_h, \nonumber \\
 & = & G^{ik} \underline D_k e \cdot \underline D_i e \sqrt{\dg} - G^{ik}  \bigl( \underline D_k e \cdot \underline D_\ell x + \underline D_k x \cdot \underline D_\ell e \bigr) G^{j \ell} (\underline D_j x 
 \cdot \underline D_i e) \sqrt{\dg} \nonumber  \\
 & & + G^{ik} G^{j \ell} (\underline D_j x \cdot \underline D_\ell e) (\underline D_k x \cdot \underline D_i e) \, \sqrt{\dg} + r^{(2)}_h, \nonumber
 \end{eqnarray}
 where $| r^{(1)}_h |, | r^{(2)}_h | \leq c | \nabla_{\mathcal M} e |^3$.
Let us decompose
\begin{displaymath}
\underline D_i e = G^{j \ell} ( \underline D_i e \cdot \underline D_j x) \underline D_\ell x + (\underline D_i e \cdot \nu) \nu, \quad i=1,2,3
\end{displaymath}
from which we deduce observing that $\underline D_i x \cdot \nu=0$ and $G^{k \ell} \mu_\ell \underline D_k x= \mu_k \underline D_k x =0$ (recall 
\eqref{eq:tanggrad})  that
\begin{eqnarray} 
G^{ik} \underline D_i e \cdot \underline D_k e & = &  G^{ik} G^{j \ell} G^{rs}  (\underline D_i e \cdot \underline D_j x)(\underline D_k e \cdot \underline D_r x)  \, (\underline D_\ell x \cdot \underline D_s x) + 
G^{ik} (\underline D_i e \cdot \nu) (\underline D_k e \cdot \nu) \nonumber \\
& = & G^{ik} G^{j \ell} G^{rs} (G_{\ell s} - \mu_\ell \mu_s) (\underline D_i e \cdot \underline D_j x)(\underline D_k e \cdot \underline D_r x) + G^{ik} (\underline D_i e \cdot \nu) (\underline D_k e \cdot \nu) \nonumber  \\
& = & G^{ik} G^{j \ell}  (\underline D_i e \cdot \underline D_j x)(\underline D_k e \cdot \underline D_\ell x) + G^{ik} (\underline D_i e \cdot \nu) (\underline D_k e \cdot \nu). \label{eq:decompe}
\end{eqnarray} 
Inserting this relation into \eqref{eq:err2}  and simplifying we obtain using the Binet--Cauchy identity
\begin{eqnarray}
\lefteqn{ \hspace{-1cm} 
 \bigl( G_h^{-1}\nabla_{\mathcal M}  x_h \sqrt{\dgh} - G^{-1} \nabla_{\mathcal M}  x \sqrt{\dg} \bigr) : \nabla_{\mathcal M} e 
  =   G^{ik}  (\underline D_i e \cdot \nu) (\underline D_k e \cdot \nu)  \sqrt{\dg} } \label{eq:err3}  \\
  & & +      G^{ik} G^{j \ell}\bigl( (\underline D_j x \cdot \underline D_\ell e) (\underline D_k x \cdot \underline D_i e) - (\underline D_k x \cdot \underline D_\ell e) (\underline D_j x \cdot \underline D_i e) \bigr)  \sqrt{\dg} 
  + r^{(2)}_h \nonumber  \\
  & = &  G^{ik} (\underline D_i e \cdot \nu) (\underline D_k e \cdot \nu)  \sqrt{\dg} + G^{ik} G^{j \ell} (\underline D_j x \wedge \underline D_k x) \cdot ( \underline D_\ell e \wedge \underline D_i e) \sqrt{\dg}
  + r^{(2)}_h. \nonumber
 \end{eqnarray}
 In order to handle the second term on the right hand side of \eqref{eq:err3} we claim that 
 \begin{equation} \label{eq:estpe}
 | G^{ik} G^{j \ell} (\underline D_j x \wedge \underline D_k x) \cdot ( \underline D_\ell e \wedge \underline D_i e)  | \sqrt{\dg}  \leq \sum_{i=1}^3 | P \underline D_i e |^2.
 \end{equation}
 In order to prove this inequality we  choose $v,w \in \mathbb R^3$ in such a way that $\lbrace v,w,\nu \rbrace$ is an orthonormal basis of $\mathbb R^3$ with $\nu=v \wedge w$ and
 let $a,b \in \mathbb R^3$ be defined by $a_i:=P \underline D_i e \cdot v, b_i=P \underline D_i  e\cdot w, i=1,2,3$. Thus $P \underline D_i e = a_i v + b_i w$ and 
 we have for $1 \leq i < \ell \leq 3$ with $\sigma(i,\ell)$ as in Lemma \ref{lem:Gprop}
 \begin{displaymath}
 \nu \cdot   ( \underline D_\ell e \wedge \underline D_i e) =\nu \cdot   (P \underline D_\ell e \wedge P\underline D_i e) = (a_\ell b_i - a_i b_\ell) = (-1)^{i+\ell} (a \wedge b)_{\sigma(i,\ell)}.
 \end{displaymath}
 Combining this relation with \eqref{eq:form1} we obtain
 \begin{eqnarray*}
 \lefteqn{ 
 G^{ik} G^{j \ell} (\underline D_j x \wedge \underline D_k x) \cdot ( \underline D_\ell e \wedge \underline D_i e)
 = 2  \sum_{1 \leq i < \ell \leq 3}  G^{ik} G^{j \ell} (\underline D_j x \wedge \underline D_k x) \cdot ( \underline D_\ell e \wedge \underline D_i e)}  \\
 &=& \frac{2}{\sqrt{\mbox{det} \, G}}  \sum_{1 \leq i < \ell \leq 3} (-1)^{i+\ell} \mu_{\sigma(i,\ell)} \, \nu \cdot ( \underline D_\ell e \wedge \underline D_i e) 
 =  \frac{2}{\sqrt{\mbox{det} \, G}} \sum_{1 \leq i < \ell \leq 3} \mu_{\sigma(i,\ell)} (a \wedge b)_{\sigma(i,\ell)} .
 \end{eqnarray*}
Finally, the Cauchy Schwarz inequality together with the fact that $| \mu| =1$ implies
 \begin{displaymath}
 |  G^{ik} G^{j \ell} (\underline D_j x \wedge \underline D_k x) \cdot ( \underline D_\ell e \wedge \underline D_i e) |  \, \sqrt{\mbox{det} \, G}
    \leq 2 | a \wedge b | 
  \leq   \bigl( | a |^2 + | b |^2 \bigr)  =  \sum_{i=1}^3 | P \underline D_i e |^2,
 \end{displaymath}
 which completes the proof of \eqref{eq:estpe}. \\
\noindent
 If we now integrate \eqref{eq:err3} over $\mathcal M$ and use \eqref{eq:estpe}, \eqref{eq:este1} as well as \eqref{eq:prop1} we obtain
\begin{eqnarray}
\lefteqn{ \hspace{-1cm} 
\int_{\mathcal M} \bigl( G_h^{-1}\nabla_{\mathcal M}  x_h \sqrt{\dgh} - G^{-1} \nabla_{\mathcal M}  x \sqrt{\dg} \bigr) : 
\nabla_{\mathcal M} e   \, do } \label{eq:err4} \\
&  \geq&  \int_{\mathcal M} G^{ik} (\underline D_i e \cdot \nu) (\underline D_k e \cdot \nu)  \sqrt{\dg} 
 -   \sum_{i=1}^3 \int_{\mathcal M} | P \underline D_i e |^2 \, do - c \int_{\mathcal M} | \nabla_{\mathcal M} e |^3 \, do \nonumber  \\
& \geq & c_0^2 \sum_{i=1}^3 \int_{\mathcal M} (\underline D_i e \cdot \nu)^2 \, do  -  \sum_{i=1}^3 \int_{\mathcal M} | P \underline D_i e |^2 \, do - c h^{\frac{1}{2}}   \Vert \nabla_{\mathcal M} e \Vert^2. \nonumber
\end{eqnarray}

\noindent
Let us next turn to the third integral on the left hand side of \eqref{eq:err1} and write
\begin{eqnarray}
\lefteqn{ \hspace{-1.5cm} 
\int_{\mathcal M}  \bigl( \nabla_{\mathcal M}  x_h : \nabla_{\mathcal M}  \bigl( P_h e_h \bigr) - \nabla_{\mathcal M}  x :  \nabla_{\mathcal M} \bigl( P e_h \bigr) \bigr)  \, do  = \int_{\mathcal M} \nabla_{\mathcal M}  e :\nabla_{\mathcal M} \bigl( (P_h-P) e_h \bigr) \, do } \nonumber  \\
& &  + \int_{\mathcal M} \nabla_{\mathcal M}  e : \nabla_{\mathcal M} \bigl( P e_h \bigr) \, do  + \int_{\mathcal M} \nabla_{\mathcal M}  x :  \nabla_{\mathcal M}  \bigl( (P_h - P) e_h \bigr) \, do
\nonumber   \\
& \equiv : & S_1+ S_2 + S_3. \label{eq:s1s2s3}
\end{eqnarray}
We recall that $P_h = I_3 - \hat \nu_h \otimes \hat \nu_h$ so that $(P_h - P) e_h = (e_h \cdot \nu) \nu - (e_h \cdot \hat \nu_h) \hat \nu_h$ and hence
\begin{eqnarray}
\nabla_{\mathcal M} \bigl( (P_h - P) e_h \bigr) & = & \nabla_{\mathcal M} [ e_h \cdot \nu] \otimes (\nu - \hat \nu_h) + \nabla_{\mathcal M} [ e_h \cdot (\nu - \hat \nu_h)] \otimes  \hat \nu_h \nonumber  \\
& & + (e_h \cdot \hat \nu_h) \nabla_{\mathcal M}(\nu - \hat \nu_h) + \bigl(  e_h \cdot (\nu - \hat \nu_h)  \bigr) \nabla_{\mathcal M} \nu. \label{eq:phpdif}
\end{eqnarray} 
Before estimating $S_1$ we observe that we have  for  $2 \leq r \leq \infty$:
\begin{equation} \label{eq:tnuh1}
\Vert \hat \nu_h - \nu \Vert_{L^r} + h \Vert \nabla_{\mathcal M} ( \hat \nu_h - \nu) \Vert_{L^r} \leq c h^{\frac{2}{r}-1} \Vert \nabla_{\mathcal M} e \Vert + c h^{k-1+\frac{2}{r}}.
\end{equation}
To see this, we use \eqref{eq:inverse}, \eqref{eq:interpol},  \eqref{eq:tnuh}  in order to estimate
\begin{eqnarray*}
\Vert \hat  \nu_h - I^{k-1}_h \nu \Vert_{L^r} & \leq &  c h^{\frac{2}{r}-1} \Vert \hat \nu_h -  I^{k-1}_h \nu  \Vert    
 \leq  c h^{\frac{2}{r}-1} \Vert \hat \nu_h - \nu \Vert + c h^{\frac{2}{r}-1} h^k  \\
 & \leq & c h^{\frac{2}{r}-1} \Vert \nabla_{\mathcal M} (x_h-x) \Vert + c h^{k-1+\frac{2}{r}},
\end{eqnarray*}
and similarly
\begin{displaymath}
\Vert \nabla_{\mathcal M} ( \hat \nu_h - I^{k-1}_h \nu) \Vert_{L^r} \leq c h^{\frac{2}{r}-2}  \Vert \nabla_{\mathcal M} (x_h-x) \Vert + c h^{k-2+\frac{2}{r}}.
\end{displaymath}
The bound \eqref{eq:tnuh1} then follows from \eqref{eq:interpol}. Note that we have in particular that  $\Vert \hat \nu_h \Vert_{L^\infty} \leq c$ so that
we infer from \eqref{eq:phpdif}
\begin{equation} \label{eq:phmp}
| \nabla_{\mathcal M} \bigl( (P_h - P) e_h \bigr) | \leq c \bigl( | e_h | \, | \nu - \hat \nu_h| + | \nabla_{\mathcal M} e_h | \, | \nu - \hat \nu_h | + |e_h | | \nabla_{\mathcal M} (\nu - \hat \nu_h) | \bigr).
\end{equation}
Using this estimate together with \eqref{eq:tnuh1},  \eqref{eq:interpol}, the bound $\Vert \nabla_{\mathcal M} e \Vert \leq h^{\frac{3}{2}}$ and recalling that $k \geq 2$ we obtain
\begin{eqnarray}
| S_1 | & \leq & c \int_{\mathcal M} | \nabla_{\mathcal M} e| \bigl(  | e_h | \, | \nu - \hat \nu_h| + | \nabla_{\mathcal M} e_h | \, | \nu - \hat \nu_h | + |e_h | | \nabla_{\mathcal M} (\nu - \hat \nu_h) |   \bigr) do  \nonumber  \\
& \leq & c \Vert \nabla_{\mathcal M} e \Vert \bigl( \Vert e_h \Vert_{H^1} \Vert \nu - \hat \nu_h \Vert_{L^\infty} + \Vert \nabla_{\mathcal M} (\hat \nu_h - \nu ) \Vert_{L^3} \Vert e_h \Vert_{L^6}   \bigr) \nonumber  \\
& \leq & c \Vert \nabla_{\mathcal M} e \Vert \Vert e_h \Vert_{H^1}  \bigl( \Vert \hat \nu_h - \nu \Vert_{L^\infty}  +  \Vert \nabla_{\mathcal M} (\hat \nu_h - \nu ) \Vert_{L^3}  \bigr) \nonumber  \\
& \leq & c \Vert \nabla_{\mathcal M} e \Vert \Vert e_h \Vert_{H^1} \bigl( h^{-\frac{4}{3}} \Vert \nabla_{\mathcal M} e \Vert + h^{\frac{2}{3}}+ h^{-1}  \Vert \nabla_{\mathcal M} e \Vert+h \bigr) \nonumber \\
& \leq & c h^{\frac{1}{6}} \Vert \nabla_{\mathcal M} e \Vert \bigl( \Vert e_h \Vert + \Vert \nabla_{\mathcal M} e \Vert + \Vert \nabla_{\mathcal M} (x- I^k_h x)  \Vert \bigr) \nonumber  \\
& \leq & \bigl( \epsilon + c h^{\frac{1}{6}} \bigr) \Vert \nabla_{\mathcal M} e \Vert^2 + c_\epsilon \bigl( \Vert e_h \Vert^2 + h^{2k} \bigr).  \label{eq:S1est}
\end{eqnarray}
Next, recalling \eqref{eq:interpol} we have
\begin{eqnarray*}
S_2 &= &  \int_{\mathcal M} \underline D_i e \cdot \bigl( (\underline D_i P) e_h + P \underline D_i e + P \underline D_i (x- I^k_h x) \bigr) do \\
& \geq & \sum_{i=1}^3 \int_{\mathcal M} | P \underline D_i e |^2 do - c \Vert \nabla_{\mathcal M} e \Vert \Vert e_h \Vert - c h^k \Vert \nabla_{\mathcal M} e \Vert \\
& \geq &  \sum_{i=1}^3 \int_{\mathcal M} | P \underline D_i e |^2 do - \epsilon \Vert \nabla_{\mathcal M} e \Vert^2 - c_\epsilon \bigl( h^{2k} + \Vert e_h \Vert^2 \bigr).
\end{eqnarray*}
Finally, integration by parts together with \eqref{eq:tnuh} yields
\begin{eqnarray*}
| S_3 |  &= &  |  - \int_{\mathcal M} \Delta_{\mathcal M} x \cdot \bigl( (P_h - P) e_h \bigr) \, do | \leq c \int_{\mathcal M} | \hat \nu_h - \nu | \, | e_h | \, do  \leq c \Vert \hat \nu_h - \nu \Vert \Vert e_h \Vert \\
& \leq &  c \bigl( \Vert \nabla_{\mathcal M} e \Vert + h^k \bigr) \Vert e_h \Vert 
 \leq  \epsilon \Vert \nabla_{\mathcal M} e \Vert^2 + c_\epsilon \Vert e_h \Vert^2 + c h^{2k},
\end{eqnarray*}
so that returning to \eqref{eq:s1s2s3}
\begin{eqnarray}
\lefteqn{  \hspace{-1.5cm} 
\frac{1+ \alpha}{\alpha}  \int_{\mathcal M}  \bigl( \nabla_{\mathcal M}  x_h : \nabla_{\mathcal M}  \bigl( P_h e_h \bigr) - \nabla_{\mathcal M}  x :  \nabla_{\mathcal M} \bigl( P e_h \bigr) \bigr)  \, do }  \label{eq:err5} \\
& \geq & \frac{1+\alpha}{\alpha}   \sum_{i=1}^3 \int_{\mathcal M} | P \underline D_i e |^2 do - \bigl( \epsilon + c h^{\frac{1}{6}} \bigr) \Vert \nabla_{\mathcal M} e \Vert^2 - c_\epsilon \bigl( \Vert e_h \Vert^2 + h^{2k} \bigr).
\nonumber
\end{eqnarray}
If we insert \eqref{eq:e1}, \eqref{eq:err4} and \eqref{eq:err5} into \eqref{eq:err1} and estimate the terms $T_1,T_2,T_3$ in a straightforward way we obtain
\begin{eqnarray*}
\lefteqn{
 \frac{1}{2} \frac{d}{dt} \int_{\mathcal M} A(\nu,\sqrt{\dg}) e_h \cdot e_h   \, do +  c_0^2\sum_{i=1}^3 \int_{\mathcal M} (\underline D_i e \cdot \nu)^2 \, do  + \frac{1}{\alpha}   \sum_{i=1}^3 \int_{\mathcal M} | P \underline D_i e |^2 \, do } \\
 & \leq & \bigl( \epsilon+c h^{\frac{1}{6}} \bigr)  \Vert \nabla_{\mathcal M} e \Vert^2 + c \Vert e_h \Vert^2  + c_\epsilon h^{2k} 
 + c \Vert x_t - I^k_h x_t  \Vert \, \Vert e_h \Vert \\
 & &  + c \Vert x_{h,t} \Vert_{L^\infty}  \Vert e_h \Vert \, \Vert \nabla_{\mathcal M} e \Vert + c \Vert \nabla_{\mathcal M} e \Vert \, \Vert \nabla_{\mathcal M} (x - I^k_h x) \Vert \\
 & \leq &  \bigl( \epsilon+c h^{\frac{1}{6}} \bigr)  \Vert \nabla_{\mathcal M} e \Vert^2 + c_\epsilon \bigl( 1+ \Vert x_{h,t} \Vert_{L^\infty}^2 \bigr) \Vert e_h \Vert^2 + c_\epsilon h^{2k}.
 \end{eqnarray*}
 Note that 
 \begin{equation} \label{eq:h1est}
  c_0^2  \sum_{i=1}^3 \int_{\mathcal M} (\underline D_i e \cdot \nu)^2 \, do  + \frac{1}{\alpha}  \sum_{i=1}^3 \int_{\mathcal M} | P \underline D_i e |^2 \, do \geq
   \min(c_0^2,\frac{1}{\alpha}) \Vert \nabla_{\mathcal M} e \Vert^2.
  \end{equation}
  Choosing $\epsilon>0, h_0>0$ sufficiently small we thus obtain after integration with respect to time with the help of \eqref{eq:Hpos}
 \begin{displaymath}
 \Vert e_h(t) \Vert^2 +  \int_0^t \Vert \nabla_{\mathcal M} e (s) \Vert^2 ds \leq c h^{2k} + c \int_0^t  \bigl( 1+ \Vert x_{h,t}(s) \Vert_{L^\infty}^2 \bigr) \Vert e_h(s) \Vert^2 \, ds, \quad 0 \leq t  \leq \hat T_h
 \end{displaymath}
 and Gronwall's lemma implies the first two bounds in \eqref{eq:esth1} recalling that $\int_0^{\hat T_h}  \Vert x_{h,t}(t) \Vert^2_{L^\infty} dt \leq 2c_1$. In order to prove the third estimate in \eqref{eq:esth1}
 we argue in a similar way as in \eqref{eq:este1} and obtain
 \begin{displaymath}
 \int_0^{\hat T_h} \Vert \nabla_{\mathcal M} e \Vert_{L^\infty}^2 dt \leq  \hat c h^{-2} \int_0^{\hat T_h}  \bigl( \Vert \nabla_{\mathcal M} e \Vert^2 + \Vert \nabla_{\mathcal M} (x-I^k_h x) \Vert^2 \bigr) dt + c h^{2k} \leq c h^{2k-2}
 \end{displaymath}
 where $c$ only depends on $x, \hat c$ and $T$.
 \end{proof}
 
 \noindent
 In the second part of the analysis we aim to bound $\displaystyle \max_{0 \leq t \leq \hat T_h} \Vert \nabla_{\mathcal M} e \Vert^2 + \int_0^{\hat T_h} \Vert e_t \Vert^2 dt$. We start by
 inserting  $\chi_h = e_{h,t}$ into \eqref{eq:erroreq}:
\begin{eqnarray}
\lefteqn{
\int_{\mathcal M} A(\nu,\sqrt{\dg})  e_{h,t} \cdot e_{h,t} \, do  + \int_{\mathcal M} \bigl( G_h^{-1} \nabla_{\mathcal M} x_h \sqrt{\dgh} - G^{-1} \nabla_{\mathcal M}  x \sqrt{\dg} \bigr) : \nabla_{\mathcal M} e_t   \, do } \nonumber \\
& & + \frac{1+\alpha}{\alpha}  \int_{\mathcal M}  \bigl( \nabla_{\mathcal M} x_h : \nabla_{\mathcal M} (P_h e_{h,t}) - \nabla_{\mathcal M}   x :  \nabla_{\mathcal M}  ( P e_{h,t}) \bigr)  \, do \label{eq:err6}  \\
& = &  \int_{\mathcal M} \bigl( G_h^{-1} \nabla_{\mathcal M}  x_h \sqrt{\dgh} - G^{-1} \nabla_{\mathcal M} x \sqrt{\dg} \bigr) : \nabla_{\mathcal M} (I^k_h x_t - x_t)  \, do \nonumber  \\
& &  +  \int_{\mathcal M}  \bigl( A(\nu,\sqrt{\dg}) - A(\nu_h,\sqrt{\dgh}) \bigr) x_{h,t} \cdot e_{h,t}  \, do +  \int_{\mathcal M} A(\nu,\sqrt{\dg}) (x_t - I^k_h x_t)  \cdot e_{h,t}   \, do. \nonumber
\end{eqnarray}

\begin{lem} We have
\begin{eqnarray}
\lefteqn{ \int_{\mathcal M} 
\bigl( G_h^{-1} \nabla_{\mathcal M} x_h \sqrt{\dgh} - G^{-1} \nabla_{\mathcal M}  x \sqrt{\dg} \bigr) : \nabla_{\mathcal M} e_t \, do  } \label{eq:ghg}  \\
& \geq  &  \frac{d}{dt} \int_{\mathcal M}  \bigl( \sqrt{\dgh} + \sqrt{\dg} - G^{-1} \nabla_{\mathcal M} x  :  \nabla_{\mathcal M} x_h  \sqrt{\dg}\bigr) \, do 
- c \int_{\mathcal M} | \nabla_{\mathcal M} e |^2 \, do. \nonumber
\end{eqnarray}
\end{lem}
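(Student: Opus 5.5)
The plan is to regard the integrand as the first variation of the area density and reduce the claim to an exact identity plus a quadratic Taylor remainder. For a matrix $B=\nabla_{\mathcal M} y\in\mathbb R^{3\times 3}$ set
\[
E(B):=\sqrt{\det\bigl(B^TB+\mu\otimes\mu\bigr)},\qquad F(B):=G(B)^{-1}B\,E(B),
\]
where $G(B)=B^TB+\mu\otimes\mu$. Then $\sqrt{\dg}=E(\nabla_{\mathcal M}x)$ and $\sqrt{\dgh}=E(\nabla_{\mathcal M}x_h)$. The computation behind \eqref{eq:dtdet} shows that $DE(B)[C]=F(B):C$. This holds for every $C$, and not only for time derivatives, because the argument only used $\frac{d}{ds}\det G=\det G\,\mathrm{trace}(G^{-1}\dot G)$ and the symmetry of $G^{-1}$. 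Moreover $E$ is positively homogeneous of degree $2$ in $B$ restricted to tangential directions (the $2\times 2$ first fundamental form scales like $\lambda^2$, see \eqref{eq:detG}), which is consistent with \eqref{eq:trace}. Euler's relation therefore gives $D^2E(B)[B,C]=DE(B)[C]$.

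\textbf{Step 1 (exact time derivatives).} Write $\nabla_{\mathcal M} e_t=\nabla_{\mathcal M} x_{h,t}-\nabla_{\mathcal M} x_t$ and expand the left-hand side of \eqref{eq:ghg} into four terms.
\begin{itemize}
\item By \eqref{eq:dtdet} applied to $x_h$, $\int G_h^{-1}\nabla_{\mathcal M}x_h:\nabla_{\mathcal M}x_{h,t}\sqrt{\dgh}=\frac{d}{dt}\int\sqrt{\dgh}$.
\item By \eqref{eq:dtdet} applied to $x$, $\int G^{-1}\nabla_{\mathcal M}x:\nabla_{\mathcal M}x_t\sqrt{\dg}=\frac{d}{dt}\int\sqrt{\dg}$.
\item By the product rule, $-\int G^{-1}\nabla_{\mathcal M}x\sqrt{\dg}:\nabla_{\mathcal M}x_{h,t}=-\frac{d}{dt}\int G^{-1}\nabla_{\mathcal M}x:\nabla_{\mathcal M}x_h\sqrt{\dg}+\int\partial_t\bigl(G^{-1}\nabla_{\mathcal M}x\sqrt{\dg}\bigr):\nabla_{\mathcal M}x_h$.
\end{itemize}
This produces exactly the $\frac{d}{dt}$ term in \eqref{eq:ghg}, plus the remainder
\[
\mathcal R=\int_{\mathcal M}\Bigl(D^2E(\nabla_{\mathcal M}x)[\nabla_{\mathcal M}x_t,\nabla_{\mathcal M}x_h]-DE(\nabla_{\mathcal M}x_h)[\nabla_{\mathcal M}x_t]\Bigr)do .
\]
Here I used $\partial_t F(\nabla_{\mathcal M}x)=DF(\nabla_{\mathcal M}x)[\nabla_{\mathcal M}x_t]$ and $DF=D^2E$, which is symmetric.

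\textbf{Step 2 (remainder is quadratic).} Taylor expand $DE(\nabla_{\mathcal M}x_h)=DE(\nabla_{\mathcal M}x)+D^2E(\nabla_{\mathcal M}x)[\nabla_{\mathcal M}e,\cdot]+O(|\nabla_{\mathcal M}e|^2)$. The term linear in $\nabla_{\mathcal M}e$ cancels against $D^2E(\nabla_{\mathcal M}x)[\nabla_{\mathcal M}x_t,\nabla_{\mathcal M}e]$ by symmetry. The zeroth-order part is $D^2E(\nabla_{\mathcal M}x)[\nabla_{\mathcal M}x,\nabla_{\mathcal M}x_t]-DE(\nabla_{\mathcal M}x)[\nabla_{\mathcal M}x_t]$, which vanishes by the Euler relation. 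Hence $|\mathcal R|\le c\,\Vert\nabla_{\mathcal M}x_t\Vert_{L^\infty}\int|\nabla_{\mathcal M}e|^2\le c\Vert\nabla_{\mathcal M}e\Vert^2$, since $x$ is smooth.

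\textbf{Main obstacle.} The delicate points are the homogeneity identity and the uniformity of the Taylor remainder. For the homogeneity, the scaling must be taken only in the tangential part; $\mu\otimes\mu$ does not scale, but since $\mu_k\underline D_k y=0$ for any $y$, the map $\lambda\mapsto E(\lambda B)=\lambda^2E(B)$ holds for tangential gradients. I would verify this via \eqref{eq:detG}, or check it directly using \eqref{eq:trace} in the form $DE(B)[B]=2E(B)$, then differentiating in $B$. For the uniformity, $E$ is smooth only where $G(B)$ is invertible. By \eqref{eq:este1} and \eqref{eq:estxh1}, on $[0,\hat T_h]$ the segment between $\nabla_{\mathcal M}x$ and $\nabla_{\mathcal M}x_h$ lies pointwise in a compact set where $\det G\ge c_0^2/4$ and $|B|\le 2c_1$. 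On that set $D^3E$ is bounded, so the constant is independent of $h$.
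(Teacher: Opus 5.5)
Your argument is correct. Step 1 is exactly the paper's identity \eqref{eq:err8}: both use \eqref{eq:dtdet} for $x$ and $x_h$ and the product rule on the cross term. In your notation, the last four terms of \eqref{eq:err8} are precisely $D^2E(\nabla_{\mathcal M}x)[\nabla_{\mathcal M}x_t,\nabla_{\mathcal M}x_h]-DE(\nabla_{\mathcal M}x_h)[\nabla_{\mathcal M}x_t]$.

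The difference is in Step 2. The paper inserts the explicit expansions \eqref{eq:ghinvexp} and \eqref{eq:detexp}, and removes the zeroth-order part with \eqref{eq:trace} and \eqref{eq:dtdet}. It then checks by an index computation, using the formulas for $\partial_t G^{ij}$ and $G^{-1}G_hG^{-1}$, that the terms linear in $\nabla_{\mathcal M}e$ cancel. What remains is the explicit quadratic term $G^{ik}G^{j\ell}(\underline D_i e\cdot\underline D_j e)(\underline D_k x\cdot\underline D_\ell x_t)$.

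You instead explain both cancellations structurally. The linear terms cancel by symmetry of the Hessian of the area density. The zeroth-order term cancels by the Euler relation coming from degree-two homogeneity on tangential matrices, which is exactly what \eqref{eq:trace} and \eqref{eq:dtdet} encode. Your route is shorter, shows why the lemma holds, and carries over to any smooth density with that homogeneity. The paper's route is fully explicit, reuses expansions already needed for the previous lemma, and identifies the quadratic remainder exactly.

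Two small corrections.

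First, with the paper's convention $(\nabla_{\mathcal M}x)_{ij}=\underline D_i x_j$ you need $G(B)=BB^T+\mu\otimes\mu$, not $B^TB+\mu\otimes\mu$. The latter has kernel direction $\nu$ rather than $\mu$ for its first part, so in general $E(\nabla_{\mathcal M}x)\neq\sqrt{\dg}$. Likewise $DE(B)[C]=F(B):C$ fails with $F(B)=G(B)^{-1}BE(B)$.

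Second, state the Euler relation on the subspace $\{B:\ B^T\mu=0\}$ at each fixed $p$. On that subspace $G\mu=\mu$, so \eqref{eq:trace} is a purely algebraic identity:
\[
G^{-1}B:B=\mbox{trace}\bigl(G^{-1}(G-\mu\otimes\mu)\bigr)=3-1=2 .
\]
Differentiating it along $C$ in that subspace gives $D^2E(B)[B,C]=DE(B)[C]$. Both $\nabla_{\mathcal M}x$ and $\nabla_{\mathcal M}x_t$ lie in this subspace, which is all you use.
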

\begin{proof} To begin, recalling that $e=x_h-x$ and using \eqref{eq:dtdet} for $G$ and $G_h$ we have
\begin{eqnarray}
\lefteqn{
\bigl( G_h^{-1} \nabla_{\mathcal M} x_h \sqrt{\dgh} - G^{-1} \nabla_{\mathcal M}  x \sqrt{\dg} \bigr) : \nabla_{\mathcal M} (x_{h,t} - x_t) } \label{eq:err8}  \\
& = &  G_h^{-1} \nabla_{\mathcal M}  x_h  :  \nabla_{\mathcal M}  x_{h,t}  \sqrt{\dgh} - G_h^{-1} \nabla_{\mathcal M}  x_h  : \nabla_{\mathcal M}  x_t  \sqrt{\dgh} \nonumber \\
& & - \partial_t \bigl( G^{-1} \nabla_{\mathcal M} x  :  \nabla_{\mathcal M}  x_h  \sqrt{\dg}\bigr)  + (\partial_t G^{-1}) \nabla_{\mathcal M}  x :  \nabla_{\mathcal M}  x_h \sqrt{\dg} + G^{-1}
 \nabla_{\mathcal M}  x :  \nabla_{\mathcal M} x_h \partial_t \sqrt{\dg} 
\nonumber \\
& &  + G^{-1} \nabla_{\mathcal M}  x_t  :  \nabla_{\mathcal M}  x_h  \sqrt{\dg}  + G^{-1} \nabla_{\mathcal M}  x :  \nabla_{\mathcal M}  x_t 
\sqrt{\dg} \nonumber \\
& = & \partial_t \bigl( \sqrt{\dgh} + \sqrt{\dg} - G^{-1} \nabla_{\mathcal M}  x  :  \nabla_{\mathcal M}  x_h  \sqrt{\dg}\bigr)  + (\partial_t G^{-1}) \nabla_{\mathcal M}  x :  \nabla_{\mathcal M}  x_h \sqrt{\dg} 
\nonumber \\
& & + G^{-1} \nabla_{\mathcal M}  x :  \nabla_{\mathcal M} x_h \partial_t \sqrt{\dg}  
  + G^{-1} \nabla_{\mathcal M}  x_t  : \nabla_{\mathcal M}  x_h  \sqrt{\dg} - G_h^{-1} \nabla_{\mathcal M}  x_t :  \nabla_{\mathcal M} x_h  \sqrt{\dgh}, \nonumber
\end{eqnarray}
where we also used the symmetry of $G^{-1}_h$. We infer with the help of \eqref{eq:ghinvexp} and \eqref{eq:detexp} 
\begin{eqnarray*} 
\lefteqn{
G^{-1} \nabla_{\mathcal M}  x_t  : \nabla_{\mathcal M}  x_h  \sqrt{\dg} - G_h^{-1} \nabla_{\mathcal M}  x_t :  \nabla_{\mathcal M} x_h  \sqrt{\dgh} } \\
& = & G^{-1}(G_h - G) G^{-1} \nabla_{\mathcal M} x_t : \nabla_{\mathcal M} x_h \sqrt{\dg} - (G^{-1} \nabla_{\mathcal M}x : \nabla_{\mathcal M} e) G^{-1} \nabla_{\mathcal M} x_t : \nabla_{\mathcal M} x_h
\sqrt{\dg} + r^{(3)}_h \\
& = &  G^{-1}(G_h - G) G^{-1} \nabla_{\mathcal M} x_t : \nabla_{\mathcal M} x \sqrt{\dg} - (G^{-1} \nabla_{\mathcal M}x : \nabla_{\mathcal M} e) G^{-1} \nabla_{\mathcal M} x_t : \nabla_{\mathcal M} x
\sqrt{\dg} + r^{(4)}_h \\
& = & G^{-1}G_h  G^{-1} \nabla_{\mathcal M} x_t : \nabla_{\mathcal M} x \sqrt{\dg} - G^{-1} \nabla_{\mathcal M} x_t : \nabla_{\mathcal M} x \sqrt{\dg} 
- (G^{-1} \nabla_{\mathcal M} x: \nabla_{\mathcal M} e) \partial_t \sqrt{\dg} + r^{(4)}_h,
\end{eqnarray*}
where $| r^{(3)}_h |, | r^{(4)}_h | \leq c | \nabla_{\mathcal M} e |^2$. Inserting this relation into \eqref{eq:err8} we obtain
\begin{eqnarray}
\lefteqn{ 
\bigl( G_h^{-1} \nabla_{\mathcal M} x_h \sqrt{\dgh} - G^{-1} \nabla_{\mathcal M}  x \sqrt{\dg} \bigr) : \nabla_{\mathcal M} (x_{h,t} - x_t) } \label{eq:err9}  \\
& = & \partial_t \bigl( \sqrt{\dgh} + \sqrt{\dg} - G^{-1} \nabla_{\mathcal M}  x  :  \nabla_{\mathcal M}  x_h  \sqrt{\dg}\bigr) + (\partial_t G^{-1}) \nabla_{\mathcal M}  x :  \nabla_{\mathcal M}  x_h \sqrt{\dg} 
\nonumber \\
& & + G^{-1} \nabla_{\mathcal M}  x :  \nabla_{\mathcal M} x_h \partial_t \sqrt{\dg} +G^{-1}G_h  G^{-1} \nabla_{\mathcal M} x_t : \nabla_{\mathcal M} x \sqrt{\dg} \nonumber  \\
& & - G^{-1} \nabla_{\mathcal M} x_t : \nabla_{\mathcal M} x \sqrt{\dg} 
- (G^{-1} \nabla_{\mathcal M}x : \nabla_{\mathcal M} e) \partial_t \sqrt{\dg} + r^{(4)}_h  \nonumber\\
& = & \partial_t \bigl( \sqrt{\dgh} + \sqrt{\dg} - G^{-1} \nabla_{\mathcal M}  x  :  \nabla_{\mathcal M}  x_h  \sqrt{\dg}\bigr) + (\partial_t G^{-1}) \nabla_{\mathcal M}  x :  \nabla_{\mathcal M}  x_h \sqrt{\dg} 
\nonumber \\
& & + G^{-1} \nabla_{\mathcal M}  x :  \nabla_{\mathcal M} x \partial_t \sqrt{\dg} +G^{-1}G_h  G^{-1} \nabla_{\mathcal M} x_t : \nabla_{\mathcal M} x \sqrt{\dg}
- G^{-1} \nabla_{\mathcal M} x_t : \nabla_{\mathcal M} x \sqrt{\dg} + r^{(4)}_h \nonumber \\
& = & \partial_t \bigl( \sqrt{\dgh} + \sqrt{\dg} - G^{-1} \nabla_{\mathcal M}  x  :  \nabla_{\mathcal M}  x_h  \sqrt{\dg}\bigr) + (\partial_t G^{-1}) \nabla_{\mathcal M}  x :  \nabla_{\mathcal M}  x_h \sqrt{\dg} 
\nonumber \\
& &  +G^{-1}G_h  G^{-1} \nabla_{\mathcal M} x_t : \nabla_{\mathcal M} x \sqrt{\dg}
+G^{-1} \nabla_{\mathcal M} x_t : \nabla_{\mathcal M} x \sqrt{\dg} + r^{(4)}_h \nonumber 
\end{eqnarray}
where we used \eqref{eq:trace} and \eqref{eq:dtdet} in the last step. We have for the
components of $\partial_t G^{-1}$
\begin{displaymath}
\partial_t G^{ij} = - G^{ik} G^{j \ell} \bigl( \underline D_k x_t \cdot \underline D_\ell x + \underline D_k x \cdot \underline D_\ell x_t \bigr), \quad 1 \leq i,j \leq 3,
\end{displaymath}
and therefore in view of the symmetry of $G^{-1}$
\begin{eqnarray*}
\lefteqn{ (\partial_t G^{-1}) \nabla_{\mathcal M}  x :  \nabla_{\mathcal M}  x_h = 
\partial_t G^{ij} \underline D_j x \cdot \underline D_i x_h } \\
&=& - G^{ik} G^{j \ell} \bigl( \underline D_k x_t \cdot \underline D_\ell x + \underline D_k x \cdot \underline D_\ell x_t \bigr) 
\underline D_j x \cdot \underline D_i x_h  
 =  - G^{ik} G^{j \ell} \underline D_k x \cdot \underline D_\ell x_t \bigl( \underline D_i x \cdot \underline D_j x_h + \underline D_j x \cdot \underline D_i x_h \bigr).
\end{eqnarray*}
Furthermore,
\begin{eqnarray*}
\lefteqn{
G^{-1}G_h  G^{-1} \nabla_{\mathcal M} x_t : \nabla_{\mathcal M} x =  G^{ki} G_{h,ij} G^{j \ell}  \underline D_\ell x_t \cdot \underline D_k x  } \\
& = & G^{ik} G^{j \ell} \bigl( \underline D_i x_h \cdot \underline D_j x_h + \mu_i \mu_j \bigr) \underline D_k x \cdot \underline D_\ell x_t = 
G^{ik} G^{j \ell}  (\underline D_i x_h \cdot \underline D_j x_h)  (\underline D_k x \cdot \underline D_\ell x_t ),
\end{eqnarray*}
since $G^{ik} G^{j \ell}  \mu_i \mu_j \underline D_k x \cdot \underline D_\ell x_t = \mu_k \mu_\ell \underline D_k x \cdot \underline D_\ell x_t =0$ in view of \eqref{eq:trace}.  Combining the above two relations
we deduce that
\begin{eqnarray*}
\lefteqn{
(\partial_t G^{-1}) \nabla_{\mathcal M}  x :  \nabla_{\mathcal M}  x_h   +G^{-1}G_h  G^{-1} \nabla_{\mathcal M} x_t : \nabla_{\mathcal M} x  } \\
& = &  - G^{ik} G^{j \ell} \underline (D_k x \cdot \underline D_\ell x_t) \bigl( \underline D_i x \cdot \underline D_j x_h + \underline D_j x \cdot \underline D_i x_h \bigr) 
+ G^{ik} G^{j \ell}  (\underline D_i x_h \cdot \underline D_j x_h)  (\underline D_k x \cdot \underline D_\ell x_t) \\
& = & G^{ik} G^{j \ell} \underline D_i(x_h -x) \cdot \underline D_j(x_h-x) (\underline D_k x \cdot \underline D_\ell x_t) - G^{ik} G^{j \ell} (\underline D_i x \cdot \underline D_j x) (\underline D_k x \cdot \underline D_\ell x_t) \\
& = & G^{ik} G^{j \ell} (\underline D_i e \cdot \underline D_j e) (\underline D_k x \cdot \underline D_\ell x_t) - G^{ik} G^{j \ell} \bigl( G_{ij} - \mu_i \mu_j \bigr) (\underline D_k x \cdot \underline D_\ell x_t) \\
& = & G^{ik} G^{j \ell} (\underline D_i e \cdot \underline D_j e) (\underline D_k x \cdot \underline D_\ell x_t) - G^{k \ell}  (\underline D_k x \cdot \underline D_\ell x_t) \\
& = & G^{ik} G^{j \ell} (\underline D_i e \cdot \underline D_j e) (\underline D_k x \cdot \underline D_\ell x_t) - G^{-1} \nabla_{\mathcal M} x: \nabla_{\mathcal M} x_t.
\end{eqnarray*} 
If we insert this identity  into \eqref{eq:err9} we obtain
\begin{eqnarray*}
\lefteqn{ 
\bigl( G_h^{-1} \nabla_{\mathcal M} x_h \sqrt{\dgh} - G^{-1} \nabla_{\mathcal M}  x \sqrt{\dg} \bigr) : \nabla_{\mathcal M} (x_{h,t} - x_t) } \\  \label{eq:err7}
& = &  \partial_t \bigl( \sqrt{\dgh} + \sqrt{\dg} - G^{-1} \nabla_{\mathcal M}  x  :  \nabla_{\mathcal M}  x_h  \sqrt{\dg}\bigr) + r^{(5)}_h, \nonumber
\end{eqnarray*}
where $| r^{(5)}_h | \leq c | \nabla_{\mathcal M} e |^2$. Integration of the above relation over $\mathcal M$ finishes the proof.
\end{proof}

\begin{lem}
\begin{eqnarray}
\lefteqn{ 
\int_{\mathcal M}  \bigl( \nabla_{\mathcal M}  x_h : \nabla_{\mathcal M}  \bigl( P_h e_{h,t} \bigr) - \nabla_{\mathcal M} x : \nabla_{\mathcal M}  \bigl( P e_{h,t} \bigr) \bigr)  \, do }  \label{eq:err10} \\
&  \geq &  \frac{1}{2}  \frac{d}{dt} \sum_{i=1}^3 \int_{\mathcal M} | P  \underline D_i  e |^2 \, do - \epsilon \Vert e_{h,t} \Vert^2 - c_\epsilon \bigl( 1+ h^{-2}  \Vert \nabla_{\mathcal M} e \Vert_{L^\infty}^2 \bigr)
\bigl( \Vert \nabla_{\mathcal M} e \Vert^2 + h^{2k} \bigr). \nonumber
\end{eqnarray}
\end{lem}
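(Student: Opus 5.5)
We split the left-hand side of \eqref{eq:err10} in the same way as in \eqref{eq:s1s2s3}, now with $e_{h,t}$ in place of $e_h$. This gives
\begin{displaymath}
S_1'+S_2'+S_3' = \int_{\mathcal M} \nabla_{\mathcal M} e : \nabla_{\mathcal M}\bigl((P_h-P)e_{h,t}\bigr)\, do + \int_{\mathcal M} \nabla_{\mathcal M} e : \nabla_{\mathcal M}(P e_{h,t})\, do + \int_{\mathcal M} \nabla_{\mathcal M} x : \nabla_{\mathcal M}\bigl((P_h-P)e_{h,t}\bigr)\, do .
\end{displaymath}
The time derivative is extracted from $S_2'$ only; $S_1'$ and $S_3'$ are perturbations that we absorb.

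For $S_3'$ we integrate by parts exactly as for $S_3$. This gives $|S_3'| \le c\Vert \hat\nu_h-\nu\Vert\,\Vert e_{h,t}\Vert$, and by \eqref{eq:tnuh} this is at most $\epsilon\Vert e_{h,t}\Vert^2 + c_\epsilon(\Vert\nabla_{\mathcal M} e\Vert^2+h^{2k})$.

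For $S_1'$ we use \eqref{eq:phmp} with $e_h$ replaced by $e_{h,t}$. The problematic term contains $\nabla_{\mathcal M} e_{h,t}$. We apply the inverse estimate \eqref{eq:inverse}, $\Vert\nabla_{\mathcal M} e_{h,t}\Vert \le c h^{-1}\Vert e_{h,t}\Vert$, and put $\nabla_{\mathcal M} e$ into $L^\infty$:
\begin{displaymath}
\int_{\mathcal M} |\nabla_{\mathcal M} e|\,|\nabla_{\mathcal M} e_{h,t}|\,|\nu-\hat\nu_h|\, do \le c h^{-1}\Vert\nabla_{\mathcal M} e\Vert_{L^\infty}\Vert e_{h,t}\Vert\,\Vert\nu-\hat\nu_h\Vert .
\end{displaymath}
Combined with \eqref{eq:tnuh}, this produces the factor $h^{-2}\Vert\nabla_{\mathcal M} e\Vert_{L^\infty}^2(\Vert\nabla_{\mathcal M} e\Vert^2+h^{2k})$ after Young's inequality. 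The term $|e_{h,t}|\,|\nabla_{\mathcal M}(\nu-\hat\nu_h)|$ is handled the same way. We take $\nabla_{\mathcal M} e$ in $L^\infty$ and $e_{h,t}$ in $L^2$, and then need $\Vert\nabla_{\mathcal M}(\nu-\hat\nu_h)\Vert \le c h^{-1}(\Vert\nabla_{\mathcal M} e\Vert+h^k)$. This follows from \eqref{eq:tnuh1} with $r=2$. The remaining term $|e_{h,t}|\,|\nu-\hat\nu_h|$ is easier. In total,
\begin{displaymath}
|S_1'| \le \epsilon\Vert e_{h,t}\Vert^2 + c_\epsilon h^{-2}\Vert\nabla_{\mathcal M} e\Vert_{L^\infty}^2\bigl(\Vert\nabla_{\mathcal M} e\Vert^2+h^{2k}\bigr).
\end{displaymath}

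The main step is $S_2'$. We write $e_{h,t}=e_t+(x_t-I^k_h x_t)$, so that
\begin{displaymath}
S_2' = \int_{\mathcal M} \nabla_{\mathcal M} e:\nabla_{\mathcal M}(P e_t)\, do + \int_{\mathcal M}\nabla_{\mathcal M} e:\nabla_{\mathcal M}\bigl(P(x_t-I^k_h x_t)\bigr)\, do .
\end{displaymath}
The second integral is bounded by $c\Vert\nabla_{\mathcal M} e\Vert h^k$ using \eqref{eq:interpol}. In the first integral we expand
\begin{displaymath}
\underline D_i e\cdot\underline D_i(P e_t) = \underline D_i e\cdot(\underline D_i P)e_t + P\underline D_i e\cdot\underline D_i e_t .
\end{displaymath}
Since $P$ is a symmetric projection,
\begin{displaymath}
P\underline D_i e\cdot \underline D_i e_t = \tfrac12\partial_t|P\underline D_i e|^2 - P\underline D_i e\cdot(\partial_t P)\underline D_i e ,
\end{displaymath}
and the last term is $O(|\nabla_{\mathcal M} e|^2)$ because $\partial_t P$ is bounded.

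The term $\int_{\mathcal M}\underline D_i e\cdot(\underline D_i P)e_t\, do$ is the obstacle, because $e_t$ is not $e_{h,t}$. We split $e_t=e_{h,t}-(x_t-I^k_h x_t)$. The $e_{h,t}$ part is bounded by $\epsilon\Vert e_{h,t}\Vert^2+c_\epsilon\Vert\nabla_{\mathcal M} e\Vert^2$, and the interpolation part is $O(h^k\Vert\nabla_{\mathcal M} e\Vert)$. Collecting all terms gives \eqref{eq:err10}.
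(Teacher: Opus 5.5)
Your proposal is correct and follows essentially the same route as the paper. It uses the same $S_1+S_2+S_3$ splitting, the inverse estimate together with \eqref{eq:tnuh1} for $r=2$ to bound $S_1$, integration by parts for $S_3$, and extracts the time derivative from $P\underline D_i e_t$ in $S_2$. Your detour of first writing $e_{h,t}=e_t+(x_t-I^k_hx_t)$ and then re-splitting $(\underline D_iP)e_t$ is just an equivalent rearrangement of the paper's direct expansion $\underline D_i(Pe_{h,t})=(\underline D_iP)e_{h,t}+P\underline D_ie_t+P\underline D_i(x_t-I^k_hx_t)$.
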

\begin{proof} Let us write
\begin{eqnarray*}
\lefteqn{ \hspace{-1.5cm}
\int_{\mathcal M}  \bigl( \nabla_{\mathcal M}  x_h : \nabla_{\mathcal M}  \bigl( P_h e_{h,t} \bigr) - \nabla_{\mathcal M}  x : \nabla_{\mathcal M}  \bigl( P  e_{h,t} \bigr) \bigr)  \, do 
 =  \int_{\mathcal M} \nabla_{\mathcal M} e : \nabla_{\mathcal M}  \bigl( (P_h-P) e_{h,t} \bigr) \, do } \\
 & & + \int_{\mathcal M} \nabla_{\mathcal M}  e : \nabla_{\mathcal M}  \bigl( P e_{h,t} \bigr) \, do + \int_{\mathcal M} \nabla_{\mathcal M}  x : \nabla_{\mathcal M}  \bigl( (P_h - P) e_{h,t} \bigr) \, do \\
& \equiv : & S_1+ S_2 + S_3.
\end{eqnarray*}
Using \eqref{eq:phmp} with $e_{h,t}$ instead of $e_h$ we derive with the help of \eqref{eq:inverse} and \eqref{eq:tnuh1}
\begin{eqnarray*}
| S_1 | & \leq & c \int_{\mathcal M} | \nabla_{\mathcal M} e| \bigl(  | e_{h,t} | \, | \nu - \hat \nu_h| + | \nabla_{\mathcal M} e_{h,t}  | \, | \nu - \hat \nu_h | + |e_{h,t} | | \nabla_{\mathcal M} (\nu - \hat \nu_h) | \bigr) do \\
& \leq & c \Vert \nabla_{\mathcal M} e \Vert_{L^\infty} \bigl( \Vert \hat \nu_h - \nu \Vert \,  \Vert e_{h,t} \Vert_{H^1}  + \Vert \nabla_{\mathcal M} (\hat \nu_h - \nu)  \Vert \,  \Vert e_{h,t} \Vert \bigr) \\
& \leq & c h^{-1}  \Vert \nabla_{\mathcal M} e \Vert_{L^\infty}  \bigl(\Vert \nabla_{\mathcal M} e \Vert + h^k\bigr)  \Vert e_{h,t} \Vert  
 \leq  \epsilon \Vert e_{h,t} \Vert^2 + c_\epsilon  h^{-2}  \Vert \nabla_{\mathcal M} e \Vert_{L^\infty}^2 \bigl( \Vert \nabla_{\mathcal M} e \Vert^2 + h^{2k} \bigr).
\end{eqnarray*}
Next,
\begin{eqnarray*}
S_2 &= &  \int_{\mathcal M} \underline D_i  e \cdot \bigl( (\underline D_i P) e_{h,t} + P \underline D_i e_t + P \underline D_i (x_t - I^k_h x_t) \bigr) do \\
& \geq & \frac{1}{2} \frac{d}{dt} \int_{\mathcal M} \underline D_i e \cdot P \underline D_i e \, do - c  \Vert \nabla_{\mathcal M} e \Vert \bigl( \Vert e_{h,t} \Vert + \Vert 
\nabla_{\mathcal M} e \Vert + h^k \bigr) \\
& \geq & \frac{1}{2} \frac{d}{dt} \int_{\mathcal M} | P \underline D_i e |^2 \, do  - \epsilon \Vert e_{h,t} \Vert^2 - c_\epsilon \Vert \nabla_{\mathcal M} e \Vert^2 - c h^{2k}.
\end{eqnarray*}
Finally, integration by parts together with \eqref{eq:tnuh1} yields
\begin{eqnarray*}
| S_3 |  &= &  |  - \int_{\mathcal M} \Delta_{\mathcal M} x \cdot \bigl( (P_h - P) e_{h,t} \bigr) \, do | \leq c \int_{\mathcal M} | \hat \nu_h - \nu | \, |e_{h,t} | \, do 
 \leq  c \Vert \hat \nu_h - \nu \Vert \Vert e_{h,t} \Vert  \\
 &\leq &  c \bigl( \Vert \nabla_{\mathcal M} e \Vert + h^k \bigr) \Vert e_{h,t} \Vert 
 \leq  \epsilon  \Vert e_{h,t} \Vert^2 + c_\epsilon \Vert \nabla_{\mathcal M} e \Vert^2 + c_\epsilon h^{2k}.
\end{eqnarray*}
The above estimates imply the assertion of the lemma.
\end{proof}

\noindent
If we insert  \eqref{eq:ghg}  and \eqref{eq:err10} into \eqref{eq:err6}, recall \eqref{eq:Hpos}, \eqref{eq:prop1} and estimate the integrals on the right hand side in a straightforward manner we obtain
\begin{eqnarray}
\lefteqn{ \hspace{-8mm} 
\min(1,c_0)  \Vert e_{h,t} \Vert^2  
+\frac{1+\alpha}{2 \alpha } \frac{d}{dt}  \sum_{i=1}^3 \int_{\mathcal M} | P \underline D_i e |^2 \, do } \nonumber  \\
& & +  \frac{d}{dt} \int_{\mathcal M}  \bigl( \sqrt{\dgh} + \sqrt{\dg} - G^{-1} \nabla_{\mathcal M} x : \nabla_{\mathcal M}  x_h  \sqrt{\dg}\bigr) \, do \nonumber \\
& \leq &  \epsilon \Vert e_{h,t} \Vert^2 +   c_\epsilon \bigl( 1+ h^{-2}  \Vert \nabla_{\mathcal M} e \Vert_{L^\infty}^2 \bigr)
\bigl( \Vert \nabla_{\mathcal M} e \Vert^2 + h^{2k} \bigr) 
 + c \Vert \nabla_{\mathcal M} e \Vert \, \Vert \nabla_{\mathcal M} (x_t - I^k_h x_t)  \Vert \nonumber \\
 & & + c \Vert x_{h,t} \Vert_{L^\infty} \Vert e_{h,t} \Vert \, \Vert \nabla_{\mathcal M} e \Vert + c \Vert x_t - I^k_h x_t  \Vert \, \Vert e_{h,t} \Vert \nonumber \\
& \leq &  \epsilon \Vert e_{h,t} \Vert^2 +   c_\epsilon \bigl( 1 + h^{-2}  \Vert \nabla_{\mathcal M} e \Vert_{L^\infty}^2 \bigr) 
\bigl( \Vert \nabla_{\mathcal M} e \Vert^2 + h^{2k} \bigr)  + c_\epsilon \Vert x_{h,t} \Vert_{L^\infty}^2 \Vert \nabla_{\mathcal M} e \Vert^2. \label{eq:err11}
\end{eqnarray}

\noindent
Before we integrate this relation with respect to time we have a closer look at the third  term on the left hand side.
\begin{lem} 
\begin{eqnarray}
\lefteqn{ \int_{\mathcal M} \bigl(
\sqrt{\dgh} + \sqrt{\dg} - G^{-1} \nabla_{\mathcal M}  x  : \nabla_{\mathcal M}  x_h  \sqrt{\dg} \bigr) \, do }\label{eq:err12}  \\
& \geq  & \frac{c_0^2}{2} \sum_{i=1}^3 \int_{\mathcal M}  ( \underline D_i e \cdot \nu)^2 \, do  - \frac{1}{2}  \sum_{i=1}^3 \int_{\mathcal M} | P \underline D_i e |^2 \, do  - 
c \int_{\mathcal M} | \nabla_{\mathcal M} e |^3 \, do. \nonumber
\end{eqnarray}
\end{lem}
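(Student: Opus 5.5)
The plan is a second-order Taylor expansion of the integrand around $e=0$, pointwise on $\mathcal M$. Write $x_h = x + e$.

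First, by \eqref{eq:trace}, $G^{-1}\nabla_{\mathcal M} x : \nabla_{\mathcal M} x = 2$. Hence
\begin{displaymath}
\sqrt{\dgh} + \sqrt{\dg} - G^{-1} \nabla_{\mathcal M} x : \nabla_{\mathcal M} x_h \sqrt{\dg} = \sqrt{\dgh} - \sqrt{\dg} - G^{-1}\nabla_{\mathcal M} x : \nabla_{\mathcal M} e \sqrt{\dg}.
\end{displaymath}
By \eqref{eq:detexp}, this is exactly the remainder $r_h$ beyond the linear term. We therefore need its quadratic part explicitly, together with a cubic remainder. The bounds \eqref{eq:este1} and \eqref{eq:estxh1} keep $x+se$, $s\in[0,1]$, uniformly in a neighbourhood where $\det G(x+se)\ge c$. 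So Taylor's formula in $s$ applies, with third-derivative remainder bounded by $c|\nabla_{\mathcal M} e|^3$.

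For the second derivative, set $f(s)=\sqrt{\det G(x+se)}$. Write $G_s$ and $\nabla_{\mathcal M}x_s$ for the quantities at $x_s = x+se$. Formula \eqref{eq:dtdet}, with $s$ in place of $t$, gives
\begin{displaymath}
f'(s)=G_s^{-1}\nabla_{\mathcal M} x_s : \nabla_{\mathcal M} e\, f(s).
\end{displaymath}
Differentiating once more at $s=0$, with $\partial_s G^{ij} = -G^{ik}G^{j\ell}(\underline D_k e\cdot \underline D_\ell x + \underline D_k x \cdot \underline D_\ell e)$, gives
\begin{displaymath}
f''(0) = \Bigl[ G^{ik}\underline D_k e\cdot \underline D_i e - G^{ik}G^{j\ell}(\underline D_k e\cdot \underline D_\ell x + \underline D_k x\cdot\underline D_\ell e)(\underline D_j x\cdot \underline D_i e) + (G^{-1}\nabla_{\mathcal M}x:\nabla_{\mathcal M}e)^2\Bigr]\sqrt{\dg}.
\end{displaymath}
This is precisely the quadratic expression already computed in \eqref{eq:err2}. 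Using the decomposition \eqref{eq:decompe} and the Binet--Cauchy identity as in \eqref{eq:err3}, it equals
\begin{displaymath}
G^{ik}(\underline D_i e\cdot\nu)(\underline D_k e\cdot\nu)\sqrt{\dg} + G^{ik}G^{j\ell}(\underline D_j x\wedge \underline D_k x)\cdot(\underline D_\ell e\wedge \underline D_i e)\sqrt{\dg}.
\end{displaymath}
Thus the integrand equals $\tfrac12 f''(0) + O(|\nabla_{\mathcal M} e|^3)$.

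To finish, estimate the two pieces of $\tfrac12 f''(0)$. The first piece is bounded below by $c_0^2\sum_i(\underline D_i e\cdot\nu)^2$, using \eqref{eq:prop1}. For the wedge term, invoke \eqref{eq:estpe}, which bounds its absolute value by $\sum_i|P\underline D_i e|^2$. Multiplying by $\tfrac12$ and integrating over $\mathcal M$ yields \eqref{eq:err12}.

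The main obstacle is verifying the second-derivative identity carefully. In particular, $f''(0)$ must coincide with the quadratic part in \eqref{eq:err2}, including the term $(G^{-1}\nabla_{\mathcal M} x:\nabla_{\mathcal M} e)^2$. Checking the index bookkeeping there, and making sure the remainder is truly cubic with $h$-independent constants, is where the care lies.
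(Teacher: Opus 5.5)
Your proposal is correct and follows essentially the same route as the paper: you Taylor-expand $f(s)=\sqrt{\det G(x+se)}$ to write the integrand as $f(1)-f(0)-f'(0)=\tfrac12 f''(0)+O(|\nabla_{\mathcal M} e|^3)$, rewrite $f''(0)$ via \eqref{eq:decompe} and Binet--Cauchy as the normal term plus the wedge term, and conclude with \eqref{eq:prop1} and \eqref{eq:estpe}. The only cosmetic difference is that you recognise $f''(0)$ as the quadratic part of \eqref{eq:err2} and reuse \eqref{eq:err3}, whereas the paper redoes that short computation of $f''(0)$ directly.
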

\begin{proof} Let us define $G(\rho)$ by $G_{ij}(\rho):=\underline D_i(x+\rho e) \cdot \underline D_j(x+ \rho e)+\mu_i \mu_j$  as well as $f(\rho):= \sqrt{\mbox{det} \, G(\rho)}$. 
We have
\begin{displaymath}
f'(\rho)  =  G^{-1}(\rho) \nabla_{\mathcal M} e : \nabla_{\mathcal M} (x+ \rho e) \sqrt{\mbox{det} \, G(\rho)}
\end{displaymath}
so that using \eqref{eq:trace}
\begin{displaymath}
 f'(0)  =  G^{-1}  \nabla_{\mathcal M} e :  \nabla_{\mathcal M}  x \sqrt{\dg} = G^{-1}  \nabla_{\mathcal M}  x_h : \nabla_{\mathcal M}  x \sqrt{\dg} - 2 \sqrt{\dg}.
 \end{displaymath}
 Thus we may write with the help of a Taylor expansion
\begin{equation} \label{eq:lowest1}
\sqrt{\dgh} + \sqrt{\dg} - G^{-1} \nabla_{\mathcal M}  x : \nabla_{\mathcal M}  x_h  \sqrt{\dg} = f(1) - f(0) - f'(0) = \frac{1}{2} f''(0) + \frac{1}{6} f'''(c),
\end{equation}
for some $c \in [0,1]$.  Next we have
\begin{eqnarray*}
f''(\rho) & = &  \frac{d}{d \rho} G^{-1}(\rho) \nabla_{\mathcal M} e : \nabla_{\mathcal M} (x+ \rho e) \sqrt{\mbox{det} \, G(\rho)} 
+ G^{-1}(\rho) \nabla_{\mathcal M}  e : \nabla_{\mathcal M}  e \, \sqrt{\mbox{det} \, G(\rho)} \\
& & + \bigl( G^{-1}(\rho)  \nabla_{\mathcal M} e : \nabla_{\mathcal M} (x+ \rho e) \bigr) \bigl( G^{-1}(\rho)  \nabla_{\mathcal M} e : \nabla_{\mathcal M} (x+ \rho e) \bigr) \sqrt{\mbox{det} \, G(\rho)}.
\end{eqnarray*}
Since $\frac{d}{d \rho} G^{-1}(\rho)=-G^{-1}(\rho) \frac{d}{d \rho} G(\rho) G^{-1}(\rho)$ we obtain with the help of  \eqref{eq:decompe} 
\begin{eqnarray*}
f''(0) & = &  -  G^{j \ell} \bigl(  \underline D_\ell e \cdot \underline D_k x  +  \underline D_\ell x   \cdot \underline D_k e \bigr) G^{ki} \underline D_i e \cdot \underline D_j x  \sqrt{\dg} + G^{ik} \underline D_i e \cdot \underline D_k e \sqrt{\dg} \\
 & & + G^{ik}  G^{j \ell} (\underline D_i  e \cdot \underline D_k x) \, (\underline D_j x \cdot D_\ell e)  \sqrt{\dg} \\
 & = &  - G^{ik} G^{j \ell} \bigl(  \underline D_\ell e \cdot \underline D_k x  +  \underline D_\ell x   \cdot \underline D_k e \bigr)  \underline D_i e \cdot \underline D_j x  \sqrt{\dg} \\
 & & + G^{ik} G^{j \ell}  (\underline D_i e \cdot \underline D_j x)(\underline D_k e \cdot \underline D_\ell x) \sqrt{\dg}  + G^{ik} (\underline D_i e \cdot \nu) (\underline D_k e \cdot \nu) \sqrt{\dg} \\
 & & + G^{ik}  G^{j \ell} (\underline D_i  e \cdot \underline D_k x)  \, (\underline D_j x \cdot D_\ell e)  \sqrt{\dg} \\
 & = &  G^{ik} G^{j \ell} (\underline D_i e \cdot \underline D_k x)  \, (\underline D_j x \cdot D_\ell e) \sqrt{\mbox{det} G}
- G^{ik} G^{j \ell}   (\underline D_k x \cdot \underline D_\ell e) \, (\underline D_i e \cdot \underline D_j x) \sqrt{\mbox{det} G} \\
& & + G^{ik} (\underline D_i e \cdot \nu) (\underline D_k e \cdot \nu) \sqrt{\mbox{det} G} \\
& = & G^{ik} G^{j \ell}  \bigl( \underline D_j x \wedge \underline D_k x \bigr)  \cdot \bigl( \underline D_\ell e \wedge \underline D_i e \bigr)\sqrt{\mbox{det} G}  + G^{ik} (\underline D_i e \cdot \nu) (\underline D_k e \cdot \nu) \sqrt{\mbox{det} G}.
\end{eqnarray*}
\noindent
Inserting this relation into \eqref{eq:lowest1}, using \eqref{eq:estpe},  \eqref{eq:prop1} as well as the fact that $| f'''(c) | \leq c | \nabla_{\mathcal M} e |^3$ we obtain
\begin{eqnarray*}
\lefteqn{
\sqrt{\dgh} + \sqrt{\dg} - G^{-1} \nabla_{\mathcal M}  x : \nabla_{\mathcal M}  x_h  \sqrt{\dg} } \\
& \geq & \frac{1}{2}  G^{ij} (\underline D_i e \cdot \nu) (\underline D_j e \cdot \nu) \sqrt{\dg}  - \frac{1}{2}  | G^{ik} G^{j \ell} \bigl(\underline D_j x \wedge \underline D_k x \bigr)  \cdot \bigl( \underline D_\ell e \wedge \underline D_i e \bigr) | \sqrt{\mbox{det} G}  - c | \nabla_{\mathcal M} e |^3  \\
& \geq & \frac{c_0^2}{2}  \sum_{i=1}^3 ( \underline D_i e \cdot \nu)^2 - \frac{1}{2}  \sum_{i=1}^3  | P  \underline D_i e |^2 - c | \nabla_{\mathcal M} e |^3
\end{eqnarray*}
and the result follows after integration over $\mathcal M$. 
\end{proof}

\noindent
Integrating \eqref{eq:err11} with respect to time and using \eqref{eq:err12} we obtain after choosing $\epsilon$ sufficiently small
\begin{eqnarray*}
\lefteqn{
\frac{1}{2} \min(1,c_0)  \int_0^t \Vert e_{h,t} \Vert^2 ds + \frac{c_0^2}{2}   \sum_{i=1}^3 \int_{\mathcal M}  ( \underline D_i e  \cdot \nu)(t)^2 \, do + \frac{1}{2 \alpha} 
 \sum_{i=1}^3 \int_{\mathcal M} | (P \underline D_i e)(t) |^2 \, do   } \\
& \leq & c \int_{\mathcal M} | \nabla_{\mathcal M} e(t) |^3 \, do +  c \Vert \nabla_{\mathcal M} e(0) \Vert^2  \\
& & +  c \int_0^t  \left\{  \bigl( 1+ h^{-2}  \Vert \nabla_{\mathcal M} e \Vert_{L^\infty}^2 \bigr)\bigl( \Vert \nabla_{\mathcal M} e \Vert^2 + h^{2k} \bigr) + \Vert x_{h,t} \Vert _{L^\infty}^2
\Vert \nabla_{\mathcal M} e \Vert^2 \right\} \, ds \\
& \leq &  c h^{\frac{1}{2}}  \Vert \nabla_{\mathcal M} e(t) \Vert^2 + c h^{2k}  \bigl( 1 + \int_0^{\hat T_h} h^{-2}  \Vert \nabla_{\mathcal M} e \Vert_{L^\infty}^2 dt \bigr) \\
 & & + c  \int_0^t   \bigl( 1+ h^{-2}  \Vert \nabla_{\mathcal M} e \Vert_{L^\infty}^2 + \Vert x_{h,t} \Vert_{L^\infty}^2 \bigr) \Vert \nabla_{\mathcal M} e \Vert^2 ds,
\end{eqnarray*}
where we used \eqref{eq:este1} in the last step. Note that \eqref{eq:esth1} implies that 
\begin{equation} \label{eq:estint}
h^{-2} \int_0^{\hat T_h} \Vert \nabla_{\mathcal M} e(t) \Vert_{L^\infty}^2 dt \leq c_2 h^{2k-4} \leq c_2
\end{equation}
since we have assumed that $k \geq 2$. 
Recalling \eqref{eq:h1est} and absorbing the first term on the right hand side for sufficiently small $h_0$ we deduce that 
\begin{displaymath}
\int_0^t \Vert e_{h,t} \Vert^2 ds + \Vert \nabla_{\mathcal M} e(t) \Vert^2 \leq ch^{2k} + c  \int_0^t   \bigl( 1+ h^{-2}  \Vert \nabla_{\mathcal M} e \Vert_{L^\infty}^2 + \Vert x_{h,t} \Vert_{L^\infty}^2  \bigr) \Vert \nabla_{\mathcal M} e \Vert^2 ds.
\end{displaymath}
 Gronwall's lemma along with \eqref{eq:estint} and the bound  $\int_0^{\hat T_h}  \Vert x_{h,t}(t) \Vert^2_{L^\infty} dt \leq 2c_1$ then yields
\begin{displaymath}
\int_0^{\hat T_h} \Vert e_{h,t} \Vert^2 ds + \max_{0 \leq t \leq \hat T_h} \Vert \nabla_{\mathcal M} e(t) \Vert^2 \leq c h^{2k},
\end{displaymath}
where the constant $c$ is independent of $h$. In particular, 
$\Vert \nabla_{\mathcal M} e(\hat T_h) \Vert  \leq c h^k \leq c h^2 \leq \frac{1}{2} h^{\frac{3}{2}}, 0<h \leq h_0$ after decreasing $h_0$ further if necessary. In addition we infer from \eqref{eq:prop2} and \eqref{eq:inverse} that
\begin{eqnarray*}
\lefteqn{
\int_0^{\hat T_h} \Vert x_{h,t} \Vert_{L^\infty}^2 dt \leq   \int_0^{\hat T_h} \bigl( \Vert x_t \Vert_{L^\infty} + \Vert x_t - I^k_h x_t  \Vert_{L^\infty}+ \Vert e_{h,t} \Vert_{L^\infty} \bigr)^2 dt }  \\
& \leq & \frac{4}{3} \int_0^T \Vert x_t \Vert_{L^\infty}^2 dt + c h^{2k} + c h^{-2}  \int_0^{\hat T_h} \Vert e_{h,t} \Vert^2 dt \leq \frac{4}{3} c_1 + c h^{2k-2} \leq \frac{3}{2} c_1
\end{eqnarray*}
by choosing $h_0$ smaller if necessary.
Suppose that  $\hat T_h<T$. In view of the above estimates  we could then extend the solution $x_h$ to an interval
$[0,\hat T_h + \delta]$ such that $\Vert \nabla_{\mathcal M}  e(t) \Vert  \leq h^{\frac{3}{2}}$ for $\hat T_h \leq t \leq \hat T_h + \delta$ and $\int_0^{\hat T_h+\delta} \Vert x_{h,t} \Vert_{L^\infty}^2 dt \leq 2 c_1$ contradicting
the definition of $\hat T_h$. As a result, $\hat T_h=T$ and Theorem \ref{thm:main} is proved.

\section{Numerical results} \label{sec:5}
\setcounter{equation}{0}

\subsection{Finite element spaces and fully discrete schemes} \label{sec:5.1}
\noindent
Let us begin by describing the space $S^k_h$, which we have used for implementing our scheme.  We denote by $d$ the oriented distance function for the reference surface $\mathcal M$, for which there exists $\delta>0$ such
that $d$ is smooth in $U:= \lbrace p \in \mathbb R^3 \, | \, | d(p) | < \delta \rbrace$ and each $p \in U$ has a unique decomposition
\begin{displaymath}
p=a(p)+d(p) \mu(p)
\end{displaymath}
where $a(p)$ is the closest point projection onto $\mathcal M$ and $\mu(p)=\nabla d(p)$. 
 For $0<h \leq h_0$ we choose a polyhedral approximation $\mathcal M_h$ of $\mathcal M$ consisting of triangles $\tilde T$ of diameter $h$ that are
 shape regular and quasiuniform with vertices lying
on $\mathcal M$ and assume that $a_{| \mathcal M_h}: \mathcal M_h \rightarrow \mathcal M$ is bijective. 
Furthermore, let
\begin{displaymath}
\tilde S^k_h:= \lbrace \tilde \chi_h \in C^0(\mathcal M_h) \, | \,  \tilde \chi_{h| \tilde T} \mbox{ is a polynomial of degree} \leq k \mbox{ for each } \tilde T \subset \mathcal M_h \rbrace
\end{displaymath}
and then define
\begin{displaymath}
S^k_h:= \lbrace \chi_h \in C^0(\mathcal M) \, | \, \chi_h \circ a \in \tilde S^k_h \rbrace \subset W^{1,\infty}(\mathcal M).
\end{displaymath}
The interpolation operator $I^k_h:C^0(\mathcal M) \rightarrow S^k_h$ is then defined as $(I^k_h f)(p):= (\tilde I^k_h f^{e})(a^{-1}(p)), p \in \mathcal M$,
where $f^{e}:U \rightarrow \mathbb R$ is the usual extension of $f$ given by $f^{e}(p):=f(a(p))$ and $\tilde I^k_h: C^0(\mathcal M_h) \rightarrow \tilde S^k_h$ is the Lagrange interpolation operator. 
The estimates \eqref{eq:interpol} and \eqref{eq:inverse} can then be proved using the techniques developed in \cite[Section 2]{Dem09}. \\
Let us next turn to Assumption {\bf (A3)} where we restrict ourselves to the case $k=2$. To do so, fix $x_h \in (S^2_h)^3$ with $\dgh \geq c_0>0, | \nabla_{\mathcal M} x_h | \leq c_1$ and define 
$\hat \nu_h:= \tilde I^2_h \bigl( \frac{ \tilde \nu_h}{| \tilde \nu_h|} \bigr) \circ a^{-1}$, where $\tilde \nu_h \in (\tilde S^2_h)^3$ is the unique solution of 
\begin{equation} \label{eq:deftnuh}
\sum_{\tilde T} | \tilde T| \sum_{\tilde p \in \mathcal N_2(\tilde T)} \tilde \nu_h(\tilde p) \cdot \tilde \varphi_h (\tilde p) = \sum_{\tilde T} |  \tilde T |  \sum_{\tilde p \in \mathcal N_2(\tilde T)}  \nu_h^e(\tilde p) \cdot \tilde \varphi_h(\tilde p) \qquad \forall \tilde \varphi_h \in (\tilde S^2_h)^3.
\end{equation}
Here, $\nu_h^e$ is the extension of $\nu_h$, recall \eqref{nueq}, and we denote for a triangle  $\tilde T \subset \mathcal M_h$ 
\begin{displaymath}
\mathcal N_2(\tilde T):= \lbrace \tilde p \in \tilde T \, | \, \tilde p \mbox{ is either a vertex or the midpoint of an edge of } \tilde T \rbrace.
\end{displaymath}
Clearly, $\hat \nu_h \in (S^2_h)^3$. In order to verify  \eqref{eq:tnuh} we  let $x \in W^{3,\infty}(\mathcal M)^3$ with $\dg \geq c_0$.  Using $\tilde \varphi_h = \tilde \nu_h - \tilde I^2_h \nu^e$ in \eqref{eq:deftnuh} we obtain
\begin{eqnarray*}
\lefteqn{ \hspace{-2cm} 
 \sum_{\tilde T} | \tilde T | \sum_{\tilde p \in \mathcal N_2(\tilde T)} | \tilde \nu_h(\tilde p) - \tilde I^2_h \nu^e(\tilde p) |^2 =  \sum_{\tilde T} | \tilde T | \sum_{\tilde p \in \mathcal N_2(\tilde T)}  \bigl( \tilde \nu_h(\tilde p) - \tilde I^2_h \nu^e(\tilde p) \bigr) \cdot \bigl(
 \nu_h^e(\tilde p) - \tilde I^2_h \nu^e(\tilde p) \bigr) } \\
 & = & \sum_{\tilde T} | \tilde T | \sum_{\tilde p \in \mathcal N_2(\tilde T)}  \bigl( \tilde \nu_h(\tilde p) - \tilde I^2_h \nu^e(\tilde p) \bigr) \cdot \bigl(
 \nu_h^e(\tilde p) - \nu^e(\tilde p) \bigr) 
 \end{eqnarray*}
 and hence 
 \begin{equation} \label{eq:a1}
  \sum_{\tilde T} | \tilde T | \sum_{\tilde p \in \mathcal N_2(\tilde T)} | \tilde \nu_h(\tilde p) -  \nu^e(\tilde p) |^2 \leq  \sum_{\tilde T} | \tilde T | \sum_{\tilde p \in \mathcal N_2(\tilde T)} |  \nu^e_h(\tilde p) - \nu^e(\tilde p) |^2.
\end{equation}
Let us fix $\tilde T \subset \mathcal M_h$ and $\tilde p \in \mathcal N_2(\tilde T)$. Setting $p=a(\tilde p)$ and $T=a(\tilde T)$ we have
 \begin{displaymath}
 | \nu^e_h(\tilde p) - \nu^e(\tilde p) |
=  | [\frac{1}{\sqrt{\dgh}} \mu_j \underline D_{j+1} x_h \wedge \underline D_{j+2} x_h - \frac{1}{\sqrt{\dg}} \mu_j \underline D_{j+1} x \wedge \underline D_{j+2} x](p) | \leq c | \nabla_{\mathcal M} (x_h -x)(p)|,
\end{displaymath}
where $c$ depends on $c_0,c_1$ and $\Vert \nabla_{\mathcal M} x \Vert_{L^\infty}$. Inverse and interpolation inequalities then imply that
\begin{equation} \label{eq:a2}
 | \nu^e_h(\tilde p) - \nu^e(\tilde p) | 
\leq c h^{-1} \bigl( \int_T | \nabla_{\mathcal M}(x_h - x) |^2 do \bigr)^{\frac{1}{2}}+ c h^2 \Vert x \Vert_{W^{3,\infty}}.
\end{equation} 
Using the equivalence of norms on finite dimensional spaces,  a transformation argument as well as \eqref{eq:a1} and \eqref{eq:a2} we see that
\begin{eqnarray*}
\int_{\mathcal M} |  \hat \nu_h - I^2_h \nu |^2 \, do   & \leq &  c  \int_{\mathcal M_h} |  \tilde I^2_h \bigl( \frac{\tilde \nu_h}{| \tilde \nu_h |} \bigr)  - \tilde I^2_h \nu^e |^2 do \leq
c \sum_{\tilde T} | \tilde T | \sum_{\tilde p \in \mathcal N_2(\tilde T)} | \frac{\tilde \nu_h(\tilde p)}{| \tilde \nu_h(\tilde p)| } - \nu^e(\tilde p) |^2  \\
 & \leq & c   \sum_{\tilde T} | \tilde T | \sum_{\tilde p \in \mathcal N_2(\tilde T)} | \tilde \nu_h(\tilde p)   - \nu^e(\tilde p) |^2   \leq   c \bigl( \Vert \nabla_{\mathcal M} (x_h - x) \Vert^2 +  h^4 \bigr),
\end{eqnarray*}
where $c$ only depends on $c_0, c_1, \Vert x \Vert_{W^{3,\infty}}$ and constants from inverse and interpolation inequalities.  The bound \eqref{eq:tnuh} now follows with the help of
an interpolation estimate. \\[2mm]
\subsubsection{Fully discrete numerical scheme}
\noindent
In order to derive a fully discrete approximation of \eqref{eq:weakdisc} we choose a time step  $\tau>0$ and set $t^m=m \tau, ~m=0,\ldots,M$, where $M=\frac{T}{\tau}$. 
Our fully discrete approximation of \eqref{eq:weakdisc} now reads: let $x_h^0:=I^k_h x_0$ then for $m=0,1,\ldots,M-1$ find $x_h^{m+1}\in (S^k_h)^3$ such that 
\begin{eqnarray} 
\lefteqn{ \hspace{-2cm}
\frac1{\tau}\int_{\mathcal M} A(\nu_h^m,\sqrt{\dghm}) (x_{h}^{m+1}-x_h^{m}) \cdot \chi_h \, do + \int_{\mathcal M} (G_h^m)^{-1} \nabla_{\mathcal M}  x_h^{m+1} : \nabla_{\mathcal M} \chi_h \sqrt{\dghm} \, do }
\nonumber  \\
& &  + \frac{1+\alpha}{\alpha}  \int_{\mathcal M}  \nabla_{\mathcal M} x_h^{m+1} : \nabla_{\mathcal M}  \bigl( P_h^m \chi_h \bigr) \, do =0  \qquad \forall \chi_h \in (S^k_h)^3. \label{eq:weakfulldisc}
\end{eqnarray}
Here, 
$A(w,\rho) \in \mathbb R^{3 \times 3}$ is given by \eqref{eq:defH}, while 
$$
G_{h,ij}^m =   \underline D_i x_h^m \cdot \underline D_j x_h^m + \mu_i \mu_j, \quad i,j=1,2,3, \quad 
P_h^m :=I_3-\hat  \nu^m_h \otimes \hat \nu^m_h.
$$


\subsubsection{Fully discrete second order in time numerical scheme}
By following the authors in \cite{DN26} we introduce a second order in time finite element approximation of \eqref{eq:weakdisc} that takes the form: let $x_h^0:=I^k_h x_0$ then for $m=0,1,\ldots,M-1$ first find $x_h^{m+\frac12}\in (S^k_h)^3$ such that \begin{eqnarray} 
\lefteqn{ \hspace{-2cm}
\frac1{\frac12\tau}\int_{\mathcal M} A(\nu_h^m,\sqrt{\dghm}) (x_{h}^{m+\frac12}-x_h^{m}) \cdot \chi_h \, do + \int_{\mathcal M} (G_h^m)^{-1} \nabla_{\mathcal M}  x_h^{m+\frac12} : \nabla_{\mathcal M} \chi_h \sqrt{\dghm} \, do }
\nonumber  \\
& &  +~ \frac{1+\alpha}{\alpha}  \int_{\mathcal M}  \nabla_{\mathcal M} x_h^{m+\frac12} : \nabla_{\mathcal M}  \bigl( P_h^m \chi_h \bigr) \, do =0  \qquad \forall \chi_h \in (S^k_h)^3 \label{eq:weakfulldisc1}
\end{eqnarray}
and then find $x_h^{m+1}\in (S^k_h)^3$ such that
\begin{eqnarray} 
\lefteqn{ \hspace{-2cm}
\frac1{\tau}\int_{\mathcal M} A(\nu_h^{m+\frac12},\sqrt{\dghmph}) (x_{h}^{m+1}-x_h^{m}) \cdot \chi_h \, do }\nonumber\\
&&+~ \frac12\int_{\mathcal M} (G_h^{m+\frac12})^{-1} \nabla_{\mathcal M}  (x_h^{m+1}+x_h^m) :  \nabla_{\mathcal M} \chi_h \sqrt{\dghmph} \, do 
\nonumber  \\
& &  + ~\frac{1+\alpha}{2\alpha}  \int_{\mathcal M}  \nabla_{\mathcal M} (x_h^{m+1} +x_h^m): \nabla_{\mathcal M}  \bigl( P_h^{m+\frac12} \chi_h \bigr) \, do =0  \qquad \forall \chi_h \in (S^k_h)^3. \label{eq:weakfulldisc2}
\end{eqnarray}

\subsection{Numerical Computations}\label{sec5.2}
We implemented \eqref{eq:weakfulldisc} and \eqref{eq:weakfulldisc1},  \eqref{eq:weakfulldisc2} within the Finite Element Toolbox ALBERTA, see \cite{alberta}. 
To do so with the above choice of $S^k_h$ one needs to transform  integrals over $\mathcal M=a(\mathcal M_h)$ to integrals over $\mathcal M_h$, which yields
a variational form that uses  $(\tilde S^k_h)^3$ as test and solution space.
In all the computations we set $k=2$ and took the discrete hypersurface $\mathcal{M}_h$ to be a piecewise linear approximation of the unit sphere. We denote by 
$\tilde{\mathcal T}_{\mathcal M_h} $ a triangulation of $\mathcal{M}_h$ and define $\hmaxm:=\max_{\tilde T \in \tm} \mbox{diam}(\tilde T)$. Associated with the triangulation $\tm$ of $\mathcal{M}_h$, at each time $t^m$, we have the decomposition 
$\tgam:=\lbrace x^m_h(a(\tilde T)),  \tilde T  \in \tm \rbrace = \lbrace \tilde x^m_h(\tilde T),  \tilde T  \in \tm \rbrace$
of the discrete surface $\Gamma^m_h$. 



\subsubsection{Experimental order of convergence}\label{s:eoc}
In order to investigate the experimental order of convergence (eoc) of our scheme we consider the evolution of a two-dimensional sphere with initial radius $R_0$. It is not difficult to verify that $x:\mathcal M \times [0,\frac{R_0^2}{4}) \rightarrow \mathbb R^3, x(p,t)=\sqrt{R_0^2-4t} \,  p$ 
is a solution of \eqref{eq:repar}. 
\noindent
When investigating the experimental order of convergence we monitor the following relative errors
$$
\mathcal{E}_1:= \max_{0 \leq m \leq M} \frac{ \| x_h^m-x(t^m)\|^2}{ \|  x(t^m)\|^2}, \quad 
\mathcal{E}_2:=\max_{0 \leq m \leq M}  \frac{ \|\nabla_{\mathcal M} (x_h^m-x(t^m))\|^2}{ \|\nabla_{\mathcal M} x(t^m)\|^2}, 
$$
$$
\mathcal{E}_3:=\max_{0 \leq m \leq M} \frac{ \| x_h^m-x(t^m)\|^2_{H^1}}{ \| x(t^m)\|^2_{H^1}}.
$$
\noindent
For the numerical simulations in this section we set $\alpha=0.1$, $R_0=2$, $T=0.6$ and implemented the second order scheme \eqref{eq:weakfulldisc1}, \eqref{eq:weakfulldisc2} with $\tau = 0.0001$. We used the second order scheme for the computations because the first order scheme produced errors, $\mathcal{E}_i$, $i=1,2,3$, that were dominated by time discretisation errors and prohibitively small values for the time step $\tau$ were required to eliminate this effect, which was not the case for the second order scheme.  In Section \ref{s:fs}, we compare the computational efficiency of the first and second order schemes. \\
\noindent
In Table \ref{t:eoc} we display values of $\mathcal{E}_i$, $i=1,2,3$, to 3 decimal places, evaluated using a quadrature rule of degree $8$.
From this table we see the expected order of convergence with eoc's close to $4$ for  $\mathcal{E}_i$, $i=2,3$, which align with the bounds obtained in Theorem \ref{thm:main}, while for $\mathcal{E}_1$ we see eoc's close to $6$.
\begin{center}
\begin{table}[!h]
\centering
 \begin{tabular}{ |c|c|c|c|c|c|c| }
 \hline
$\hmaxm$ & $\mathcal{E}_1$ & $eoc_1$ & $\mathcal{E}_2$ & $eoc_2$ & $\mathcal{E}_3$ &$eoc_3$ \\\hline 
$3.298\times 10^{-1}$  & $2.550\times 10^{-7}$ & - &         $8.179\times 10^{-5}$ & - &         $5.443\times 10^{-5}$ & -     \\ 
$1.630\times 10^{-1}$& $2.232\times 10^{-9}$ & 6.718 & $3.195\times 10^{-6}$ & 4.607 & $2.129\times 10^{-6}$ & 4.611  \\ 
$8.407\times 10^{-2}$ & $2.640\times 10^{-11}$ & 6.703& $1.630\times 10^{-7}$ & 4.496 &  $1.087\times 10^{-7}$ & 4.497   \\ 
$4.439\times 10^{-2}$ & $3.769\times 10^{-13}$ & 6.653 & $9.669\times 10^{-9}$ & 4.424 & $6.446\times 10^{-9}$ &4.424\\  
$2.230\times 10^{-2}$& $5.770\times 10^{-15}$ & 6.072 & $6.107\times 10^{-10}$ & 4.034 & $4.012\times 10^{-10}$ &4.034\\  \hline
\end{tabular}
\caption{Spherical simulation with $\alpha=0.1$:
$\mathcal{E}_i$ and $eoc_i$, $i=1,2,3$, over the time interval $[0,0.6]$, with $\tau =0.0001$ using the second order scheme \eqref{eq:weakfulldisc1}, \eqref{eq:weakfulldisc2}.} 
\label{t:eoc}
\end{table}
\end{center}

\subsubsection{Effect of $\alpha$ on mesh quality - singular mean curvature flow}
We now  follow  the authors in \cite{EF17,KLL19} in considering the evolution of 
a dumbbell shaped surface given by the distance function
\begin{equation}
d(x)=x_1^2+x_2^2+F(x_3^2)-0.04
\label{db}
\end{equation}
with $F(s)=2s(s-199/200)$.
In all the numerical simulations in this section, Figures \ref{initial_db}--\ref{alpha_mesh}, we implemented the first order scheme \eqref{eq:weakfulldisc} with $h_{max} \approx 0.0084$ and $\tau = 0.0001$. \\
\noindent
In Figure \ref{initial_db} we show the discrete reference hypersurface $\mathcal M_h$ (left image) together with the 
triangulation ${\mathcal T}^0_h $ given by the elements $x^0_h(a(\tilde T))$, $\tilde T \in \tm$ (right image). 
The mean curvature flow for the surface is shown in Figures \ref{evol_db} and \ref{evol_db2}, with Figure \ref{evol_db} displaying the surface at times $t=0.01,0.03,0.05$ and Figure \ref{evol_db2} displaying an enlarged plot of the surface at times $t=0.04$ and $t=0.06$. In each figure we present results for two values of $\alpha$, $\alpha=1.0$ (left images) and $\alpha=0.04$ (right images). In Figure \ref{evol_db2} we see the surface develops a neck pinch singularity in finite time. Similar results to Figures \ref{evol_db} and \ref{evol_db2} are shown in Figure 14 in \cite{EF17}  and Figure 13.3  in \cite{KLL19}.  \\
\noindent
From Figures \ref{evol_db} and \ref{evol_db2} we see that the elements in the triangulations $\tgam$ appear to be of a higher quality for $\alpha=0.04$ than for $\alpha=1.0$. To investigate this further we follow the authors in \cite{EF17} and evaluate the quantity 
$$
\sigma^m_{max} = \max_{T \in \tgam}\frac{\mbox{diam}(T)}{r(T)}
$$
where $r(T)$ is the radius of the largest ball contained in $T$, such that small values of $\sigma_{max}^m$ imply that there are no simplices in $T$ with sharp angles.  
In Figure \ref{alpha_mesh} we show the effect that $\alpha$ has on (i) $\sigma_{max}^m$ and (ii) the surface area of $\tgam$.  In the left image we plot $\sigma_{max}^m$ against time for $\alpha$ ranging from $0.01$ to $1.0$. 
From this image we see that the mesh quality of the triangulation $\tgam$ increases as $\alpha$ decreases. 
In the middle image in Figure \ref{alpha_mesh} we show the decrease of the discrete surface area of $\tgam$ under the mean curvature flow. In this image we plot the surface area of $\tgam$ against time for $\alpha$ ranging from $0.01$ to $1.0$. The right image displays an enlarged section of the centre image. From these two images we see that the discrete surface area decreases at effectively the same rate for $\alpha=1.0, 0.1, 0.04$, while for smaller values of $\alpha$, namely $\alpha=0.02$ and $\alpha=0.01$, towards the end of the simulation, specifically for $t>0.045$, the decrease in the discrete surface area occurs at a slightly faster rate. Similar behaviour is observed in Figure 16  in \cite{EF17}. 


\begin{figure}[h]
\centering
\subfigure{\includegraphics[width = 0.32\textwidth,angle=90]{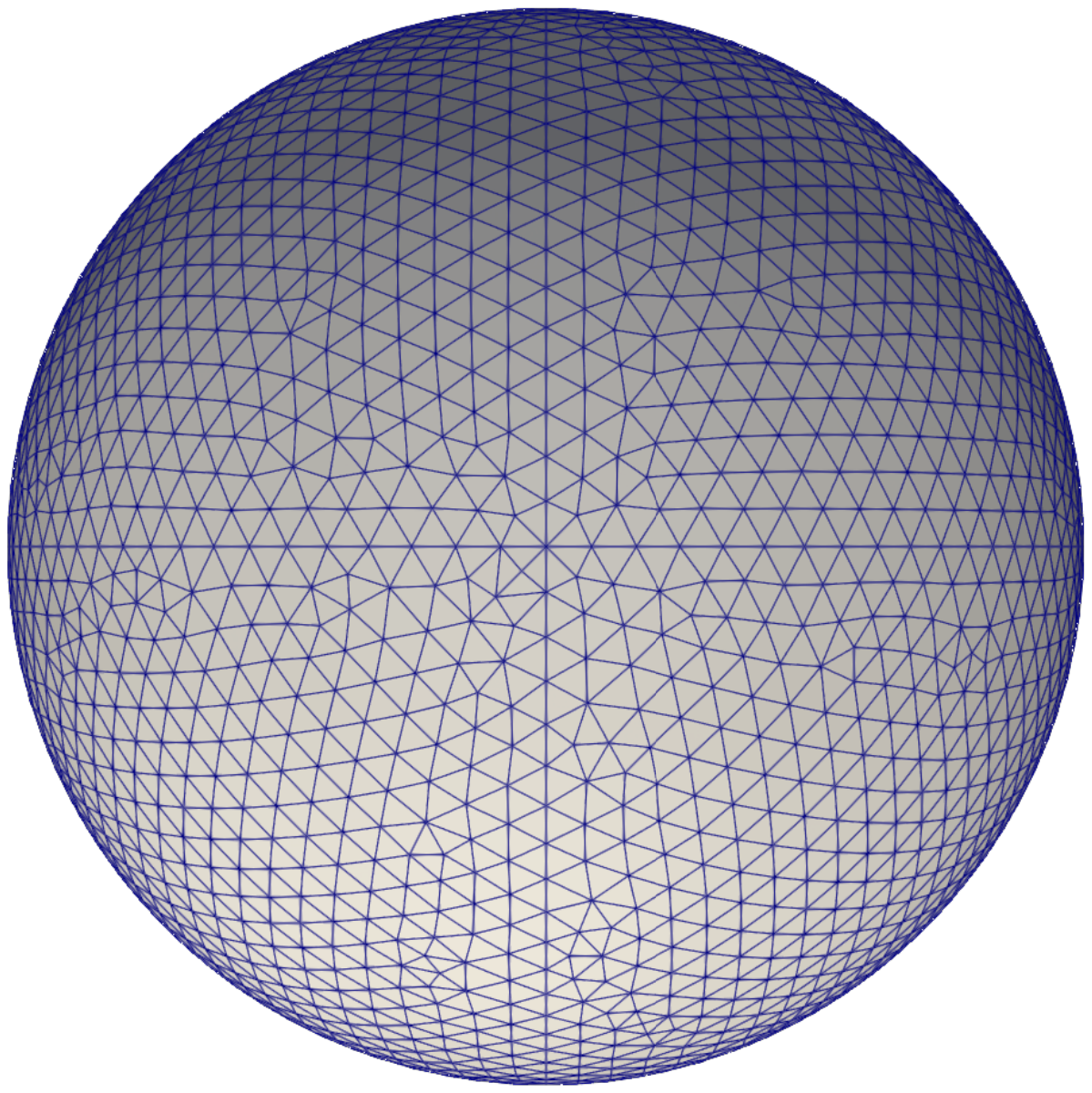}} 
\subfigure{\includegraphics[width = 0.32\textwidth,angle=90]{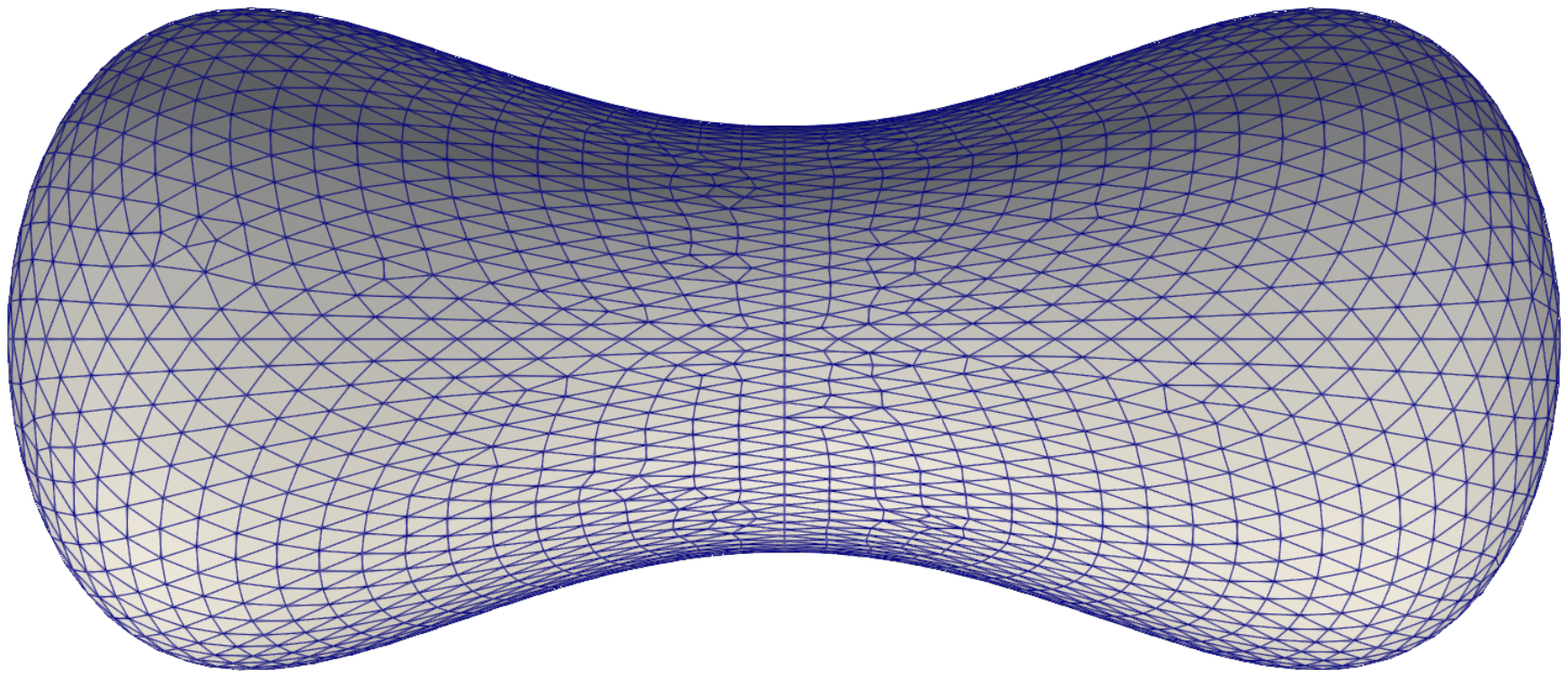}} 
\caption{Reference hypersurface $\mathcal M_h$ (left),  initial dumbbell shaped triangulation $\mathcal T^0_h$ (right).} 
\label{initial_db}
\end{figure}

\begin{figure}[h]
\centering
\subfigure{\includegraphics[width = 0.32\textwidth,angle=90]{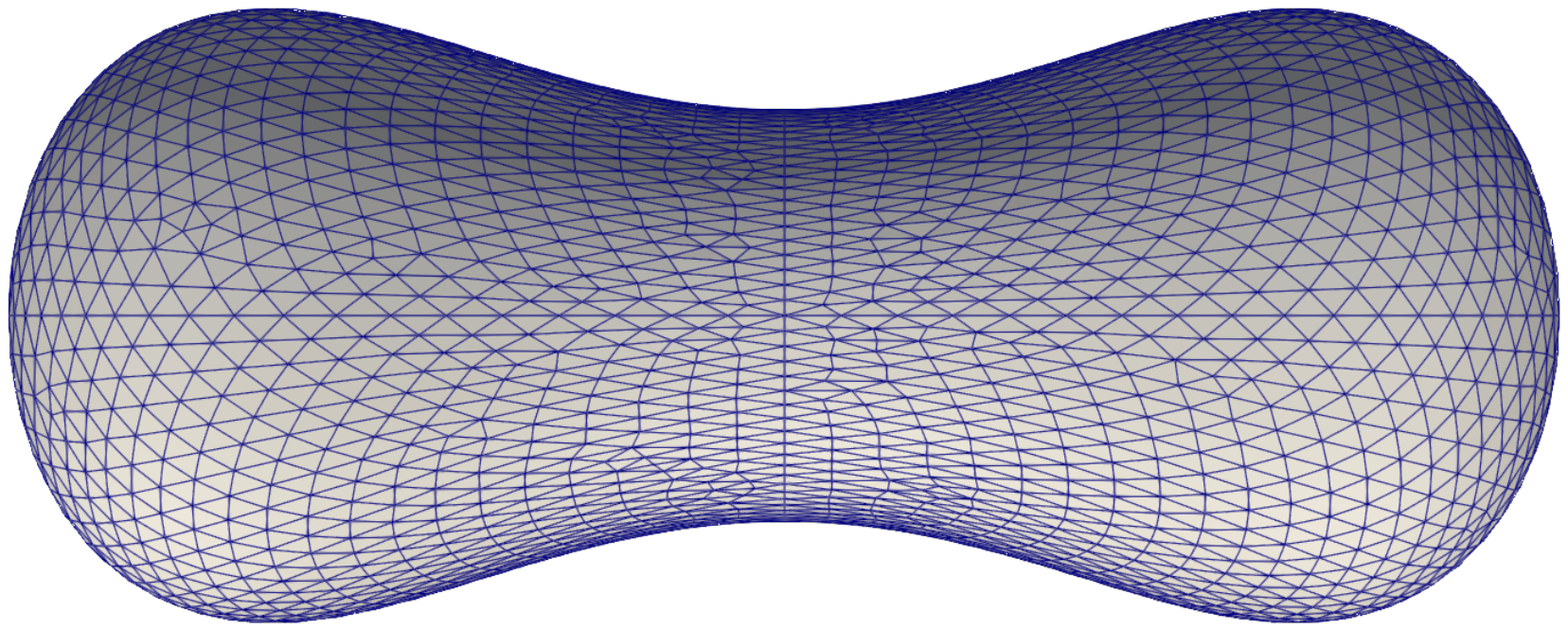}}  \quad
\subfigure{\includegraphics[width = 0.32\textwidth,angle=90]{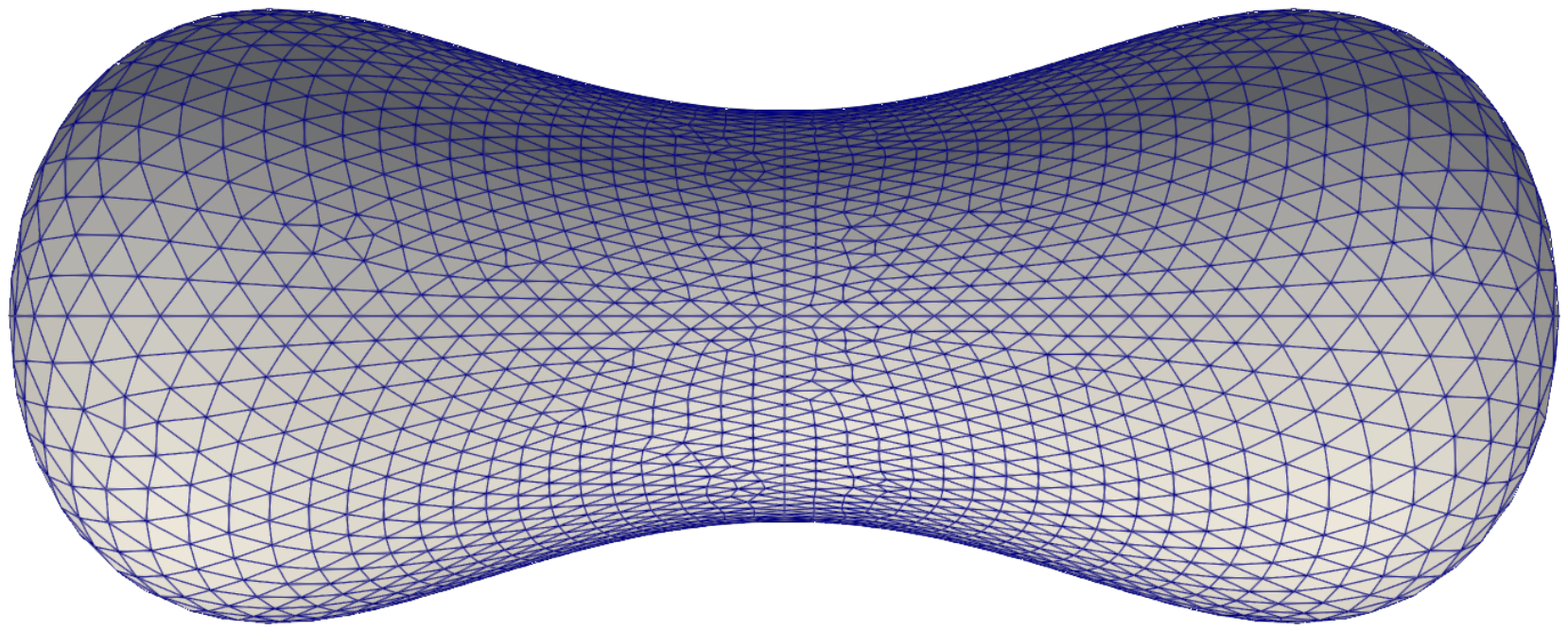}} \\[-12mm]
\subfigure{\includegraphics[width = 0.32\textwidth,angle=90]{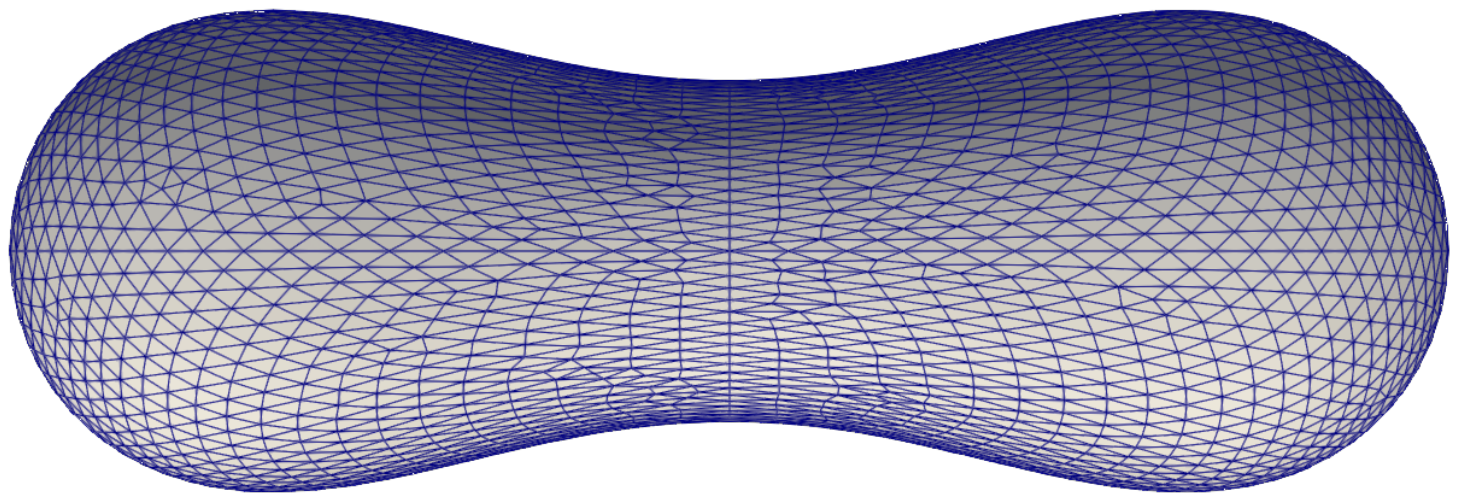}}  \quad
\subfigure{\includegraphics[width = 0.32\textwidth,angle=90]{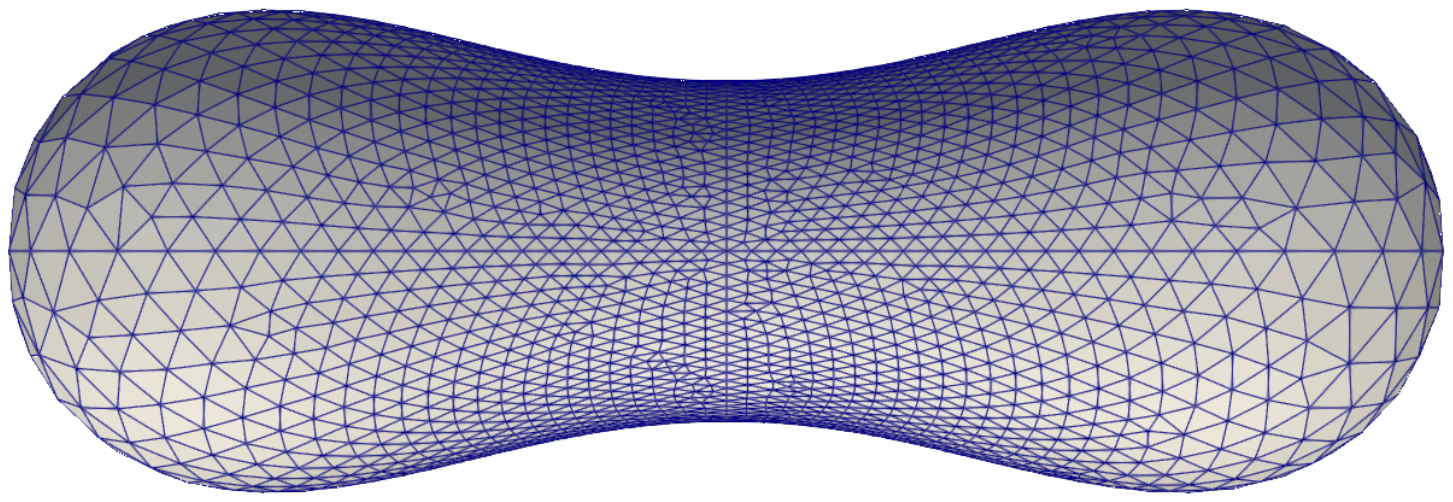}} \\[-18mm]
\subfigure{\includegraphics[width = 0.32\textwidth,angle=90]{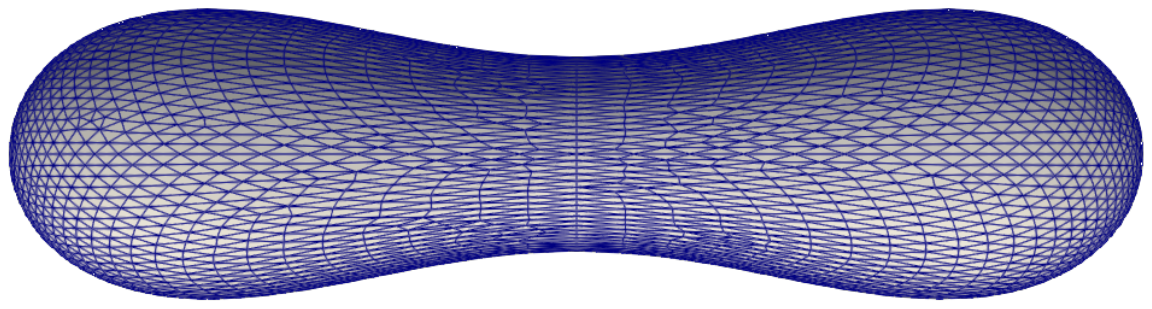}}  \quad
\subfigure{\includegraphics[width = 0.32\textwidth,angle=90]{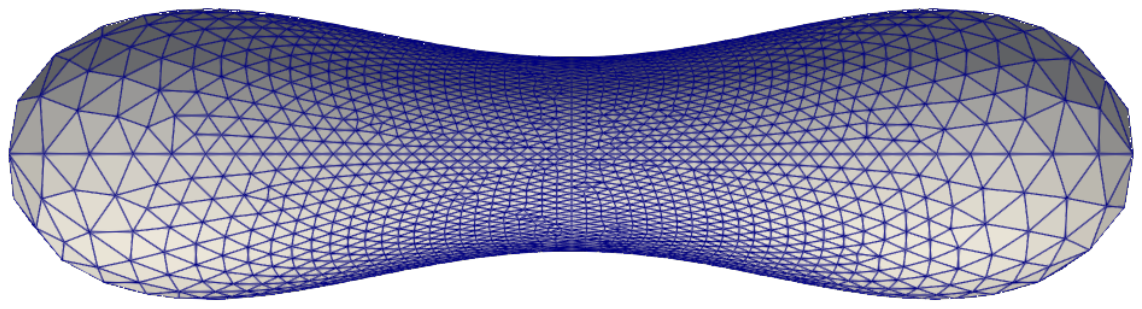}} \\[-10mm]
\caption{Dumbbell simulation: the triangulation $\tgam$ at $t=0.01$ (first row), $t=0.03$ (second row) and $t=0.05$ (third row) for $\alpha=1.0$ (left) $\alpha = 0.04$ (right).} 
\label{evol_db}
\end{figure}

\begin{figure}[h]
\centering
\subfigure{\includegraphics[width = 0.32\textwidth,angle=90]{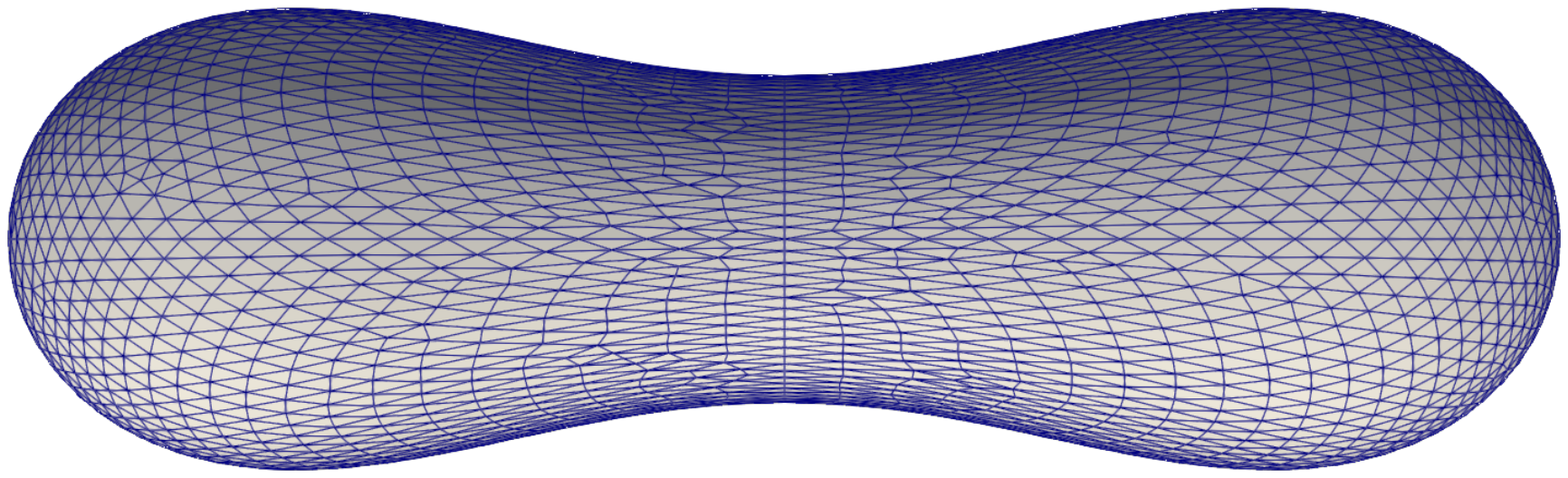}} \quad
\subfigure{\includegraphics[width = 0.32\textwidth,angle=90]{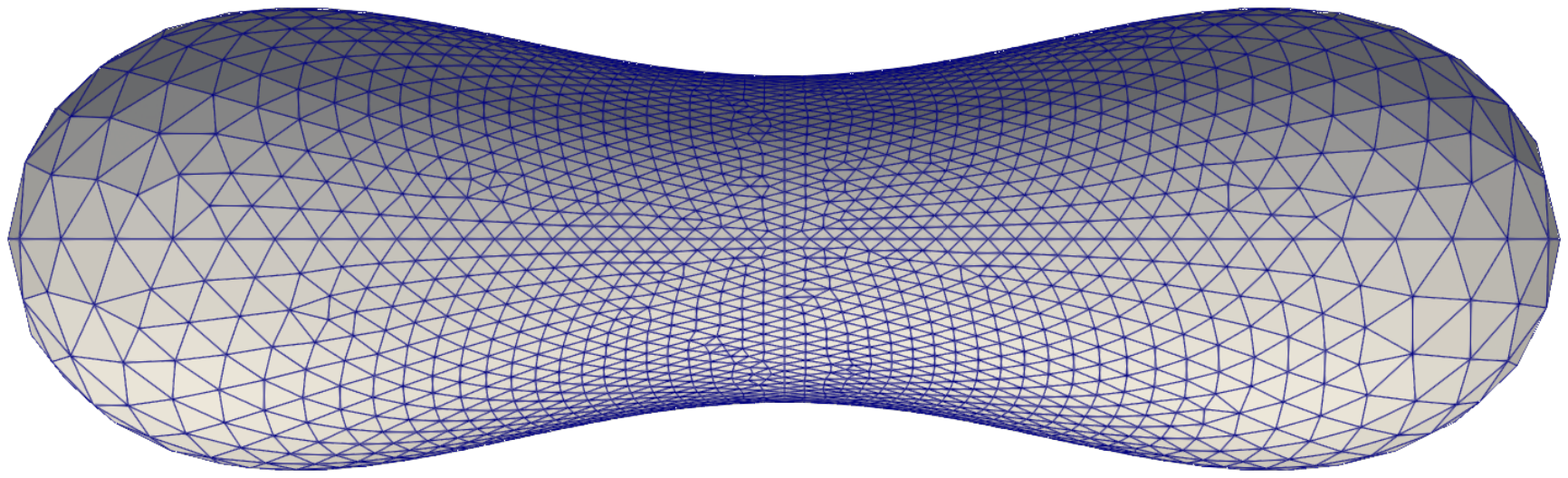}} \\[-18mm]
\subfigure{\includegraphics[width = 0.32\textwidth,angle=90]{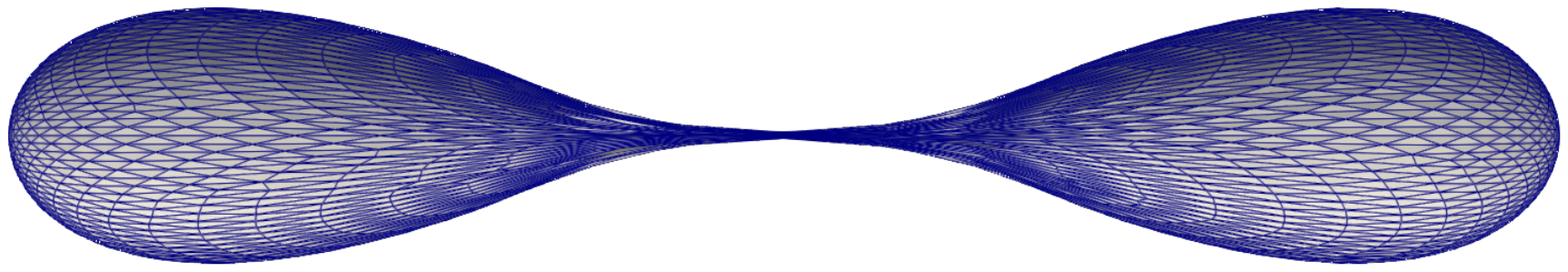}}  \quad
\subfigure{\includegraphics[width = 0.32\textwidth,angle=90]{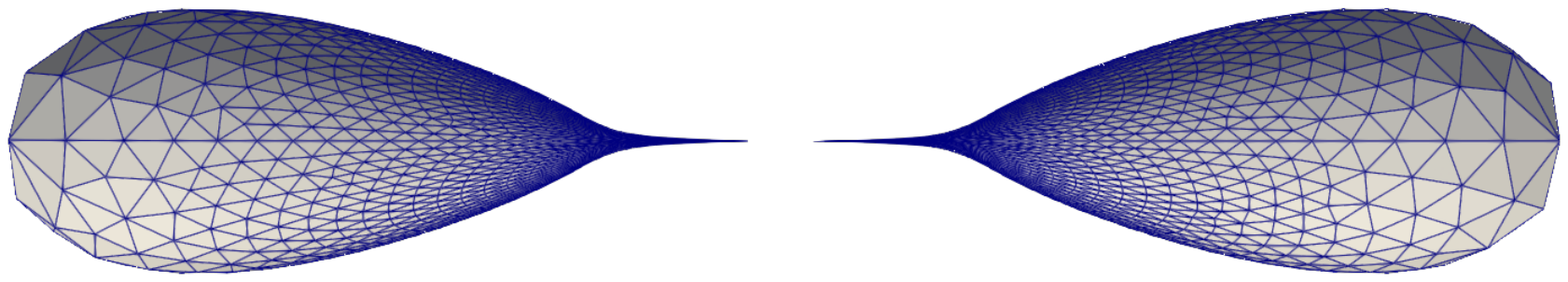}} \\[-10mm]
\caption{Dumbbell simulation: the triangulations $\tgam$ at $t^m=0.04$ (first row) and $t=0.06$ (second row) for $\alpha=1.0$ (left) $\alpha = 0.04$ (right).} 
\label{evol_db2}
\end{figure}

\begin{figure}[h]
\centering
\subfigure{\includegraphics[width = 0.32\textwidth]{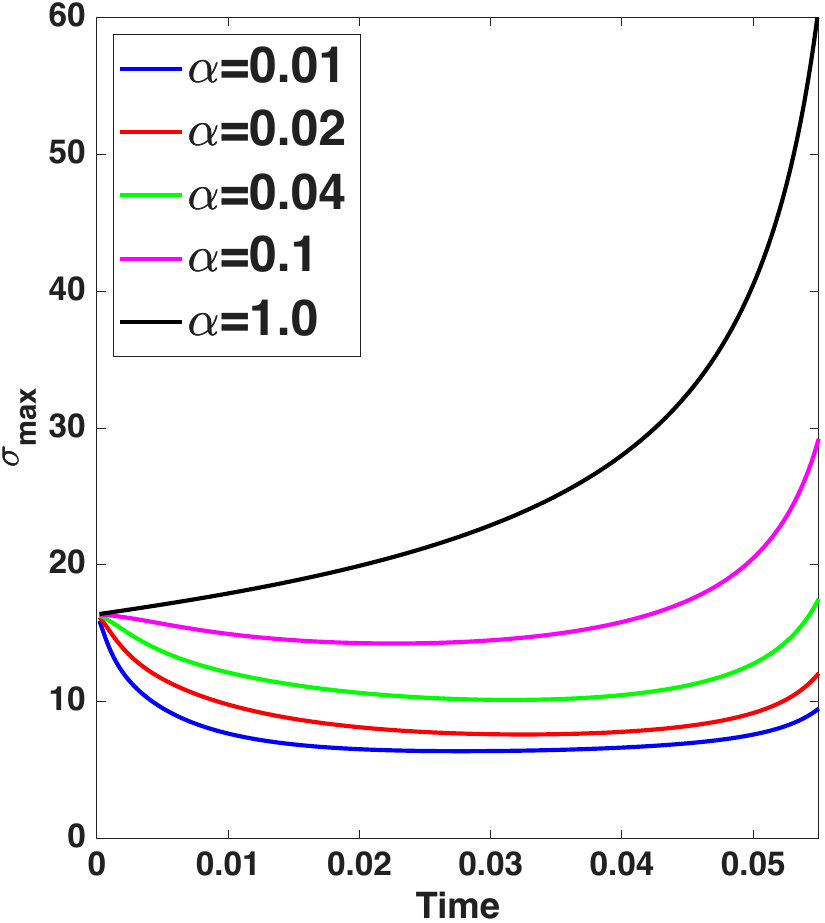}} 
\subfigure{\includegraphics[width = 0.32\textwidth]{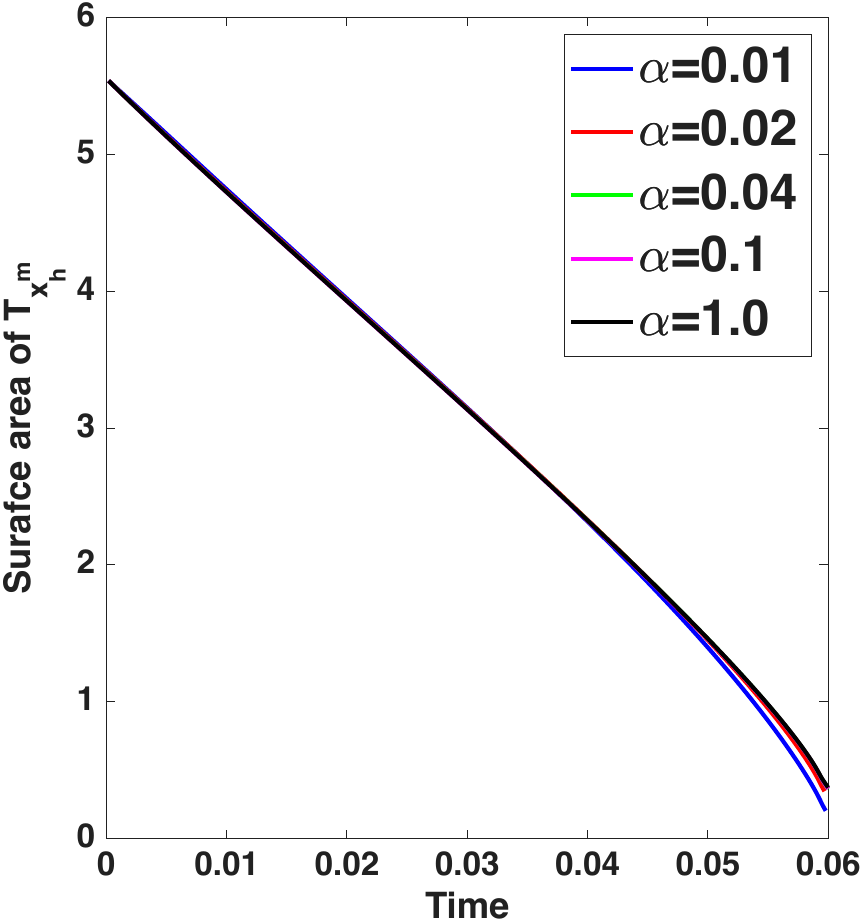}} 
\subfigure{\includegraphics[width = 0.32\textwidth]{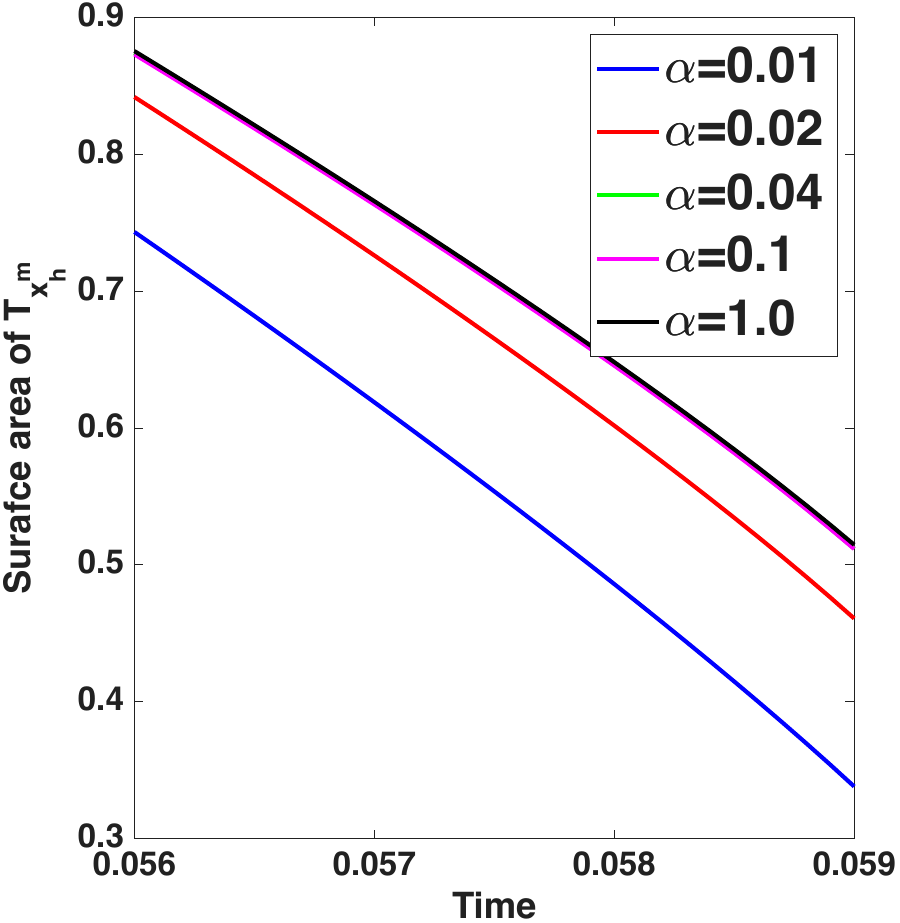}} 
\caption{Dumbbell simulation. The effect of $\alpha$ on (i) $\sigma_{max}$ (left) and (ii) the surface area of $\tgam$ (centre and right).} 
\label{alpha_mesh}
\end{figure}

%

\subsubsection{Computational efficiency of the second order scheme \eqref{eq:weakfulldisc1}, \eqref{eq:weakfulldisc2}} \label{s:fs}
In Table \ref{first-second} we compare the second order scheme  \eqref{eq:weakfulldisc1}, \eqref{eq:weakfulldisc2} with the first order scheme  \eqref{eq:weakfulldisc}. As in Section \ref{s:eoc} we consider the evolution of a two-dimensional sphere with initial radius $R_0=2$, however for these computations we fixed the spatial discretisation to be a triangulation $\tm$ of $\mathcal{M}_h$ with $\hmaxm=4.439\times 10^{-2}$ and varied the time step from $\tau=0.05$ to $\tau=0.0005$. In this simulation we set $\alpha=0.1$. From this table we see that already for $\tau = 0.001$ the spatial error dominates for the second order scheme \eqref{eq:weakfulldisc1}, \eqref{eq:weakfulldisc2} and to reach approximately the same error as the second order scheme with $\tau =0.05$ the first order scheme \eqref{eq:weakfulldisc} needs to employ a time step size of $\tau=0.0005$, which results in a CPU time that is over $14$ times larger.

\begin{center}
\begin{table}[!h]
\centering
 \begin{tabular}{ |c|c|c|c|c| }
 \hline
~&\multicolumn{2}{c|}{\eqref{eq:weakfulldisc}} & \multicolumn{2}{c|}{\eqref{eq:weakfulldisc1}, \eqref{eq:weakfulldisc2}}\\
$\tau$ &$\displaystyle{\max_{0 \leq m \leq M} \frac{\| x_h^m-x(t^m)\|^2}{\|x(t^m)\|^2}}$& CPU time [s] & $\displaystyle{\max_{0 \leq m \leq M}\frac{ \|x_h^m-x(t^m)\|^2}{\|x(t^m)\|^2}}$ & CPU time [s]  \\[3mm] \hline \rule{0pt}{2.5ex} 
$0.05$ & $1.128\times 10^{-5}$  &  $38$&  $3.720\times 10^{-9}$ & $69$\\ 
$0.01$ & $9.250\times 10^{-7}$  &  $144$&  $1.300\times 10^{-11}$ & $238$\\ 
$0.005$ & $2.516\times 10^{-7}$ &  $236$  & $1.204\times 10^{-12}$ & $379 $  \\ 
$0.001$ & $1.076\times 10^{-8}$ &  $743$&  $3.844\times 10^{-13}$&  $1122$ \\ 
$0.0005$ & $2.711\times 10^{-9}$ &  $1108$&  $3.808\times 10^{-13}$&$2083$ \\ \hline
\end{tabular}
\caption{Spherical simulation with $\alpha=0.1$. Errors and CPU times over the time interval $[0,0.2]$, with $\hmaxm=4.439\times 10^{-2}$.} 
\label{first-second}
\end{table}
\end{center}

\subsubsection{Simplified fully discrete numerical scheme}
 Let us briefly introduce a simplified version of the scheme \eqref{eq:weakfulldisc},   in which the integrals over $\mathcal M$ are replaced with integrals over $\mathcal M_h$ leading to the following form:
let $\tx_h^0:=\tilde I^k_h x^e_0$ then for $m=0,1,\ldots,M-1$ find $\tx_h^{m+1}\in (\tilde S^k_h)^3$ such that 
\begin{eqnarray} 
\lefteqn{ \hspace{-2cm}
\frac1{\tau}\int_{\mathcal M_h} A(\nuu_h^m,\sqrt{\dghm}) (\tx_{h}^{m+1}-\tx_h^{m}) \cdot \tchi_h \, do + \int_{\mathcal M_h} (G_h^m)^{-1} \nabla_{\mathcal M_h}  \tx_h^{m+1} : \nabla_{\mathcal M} \tchi_h \sqrt{\dghm} \, do }
\nonumber  \\
& &  + \frac{1+\alpha}{\alpha}  \int_{\mathcal M_h}  \nabla_{\mathcal M_h} \tx_h^{m+1} : \nabla_{\mathcal M_h}  \bigl( P_h^m \tchi_h \bigr) \, do =0  \qquad \forall \tchi_h \in (\tilde S^k_h)^3. \label{eq:weakfulldisc_simple}
\end{eqnarray}
Here, $G_h^m$ is given by 
$$
G_{h,ij}^m =   \underline D_i \tx_h^m \cdot \underline D_j \tx_h^m + \mu_i \mu_j, \quad i,j=1,2,3, 
$$
where $\underline D_i$ here refers to the $i$--th component of the tangential gradient on $\mathcal M_h$. We 
used  \eqref{eq:weakfulldisc_simple} with $\alpha =1.0$ and $\alpha=0.04$ to solve the simulation of the evolution of 
a dumbbell shaped surface given by \eqref{db}. The results it produced where visually identical to those displayed in Figures \ref{evol_db} and \ref{evol_db2}.

\begin{appendix}
\setcounter{equation}{0} 
\renewcommand{\theequation}{\Alph{section}.\arabic{equation}}
\section{Appendix} \label{sec:AppA}

\noindent
{\it Proof of Lemma \ref{lem:Gprop}:} Let $\varphi:U \rightarrow \mathbb R^3$ be a local parametrization of $\mathcal M$ with $\mu=\varphi_{\theta_1} \wedge \varphi_{\theta_2}$.  Abbreviating
$X=x \circ \varphi$ we deduce with the help of \eqref{eq:tanggrad} that
\begin{eqnarray}
v_j \circ \varphi  & = &( \underline D_{j+1} x\wedge \underline D_{j+2}x) \circ \varphi = h^{rs} h^{pq} \varphi^{j+1}_{\theta_r} \varphi^{j+2}_{\theta_p} X_{\theta_s} \wedge X_{\theta_q} \nonumber \\
& = & \bigl( h^{r1} h^{p2} - h^{r2} h^{p1} \bigr)  \varphi^{j+1}_{\theta_r} \varphi^{j+2}_{\theta_p} X_{\theta_1} \wedge X_{\theta_2}  = \frac{1}{\mbox{det}(h_{ij})} \bigl( \varphi^{j+1}_{\theta_1} \varphi^{j+2}_{\theta_2} - \varphi^{j+1}_{\theta_2} \varphi^{j+2}_{\theta_1} \bigr) X_{\theta_1} \wedge X_{\theta_2}, \nonumber
\end{eqnarray}
since
\begin{displaymath}
 h^{r1}  h^{p2}- h^{r2} h^{p1}=
 \left\{
 \begin{array}{cl}
 0, & r=p, \\
 \displaystyle \frac{1}{\mbox{det}(h_{ij})}, & r=1, p=2, \\
\displaystyle  - \frac{1}{\mbox{det}(h_{ij})}, & r=2,p=1.
 \end{array}
 \right.
 \end{displaymath}
Observing that 
\begin{displaymath}
\varphi^{j+1}_{\theta_1} \varphi^{j+2}_{\theta_2} - \varphi^{j+1}_{\theta_2} \varphi^{j+2}_{\theta_1} = (\varphi_{\theta_1} \wedge \varphi_{\theta_2})_j = \sqrt{\mbox{det}(h_{ij})} \, \mu_j \circ \varphi, \quad j=1,2,3
\end{displaymath}
we therefore obtain recalling that  $ | X_{\theta_1} \wedge X_{\theta_2} |= \sqrt{\mbox{det}(g_{ij})}$ and \eqref{eq:detG}
\begin{displaymath}
v_j \circ \varphi = \frac{\sqrt{\mbox{det}(g_{ij})}}{\sqrt{\mbox{det}(h_{ij})}} \mu_j \circ \varphi \frac{X_{\theta_1} \wedge X_{\theta_2}}{| X_{\theta_1} \wedge X_{\theta_2} |} = \sqrt{\dg \circ \varphi} \,  \mu_j \circ \varphi  \frac{X_{\theta_1} \wedge X_{\theta_2}}{| X_{\theta_1} \wedge X_{\theta_2} |}, \quad j=1,2,3.
\end{displaymath}
If we multiply by $\mu_j \circ \varphi$ and sum over $j=1,2,3$ taking into account that $| \mu |=1$  we obtain
\begin{equation} \label{eq:nuform}
\nu \circ \varphi  = (\frac{1}{\sqrt{\dg}} \mu_j  v_j) \circ \varphi = \frac{X_{\theta_1} \wedge X_{\theta_2}}{| X_{\theta_1} \wedge X_{\theta_2} |},
\end{equation}
which shows that $\nu$ is a unit normal field to $\Gamma(t)$. Next, using \eqref{eq:Gijinv},  \eqref{eq:tanggrad} and arguing in a similar way as above we find  for fixed $i,\ell \in \lbrace 1,2,3 \rbrace$ recalling also that $(\mu_k \circ \varphi) \varphi^k_{\theta_{p'}}=(\mu_j  \circ \varphi) \varphi^j_{\theta_p}=0$
 \begin{eqnarray*}
 \lefteqn{
 [G^{ik} G^{j \ell} (\underline D_j x \wedge \underline D_k x)] \circ \varphi } \\
 & = & (g^{rs} \varphi^{i}_{\theta_r} \varphi^k_{\theta_s} + (\mu_i \mu_k)\circ \varphi) (g^{mn} \varphi^j_{\theta_m} \varphi^\ell_{\theta_n} +(\mu_j \mu_\ell) \circ \varphi) \bigl( h^{pq} \varphi^j_{\theta_p} X_{\theta_q} 
 \wedge h^{p'q'} \varphi^k_{\theta_{p'}} X_{\theta_{q'}} \bigr) \\
 & = & g^{rs} g^{mn} h_{mp} h_{s p'} \varphi^{i}_{\theta_r}\varphi^\ell_{\theta_n} h^{pq}h^{p'q'} (X_{\theta_q} \wedge X_{\theta_{q'}})  
  =  g^{rs} g^{mn} \varphi^{i}_{\theta_r}\varphi^\ell_{\theta_n} (X_{\theta_m} \wedge X_{\theta_s})  \\
 & = & (g^{1n}  g^{r2}- g^{2n} g^{1r})  \varphi^{i}_{\theta_r}\varphi^\ell_{\theta_n} (X_{\theta_1} \wedge X_{\theta_2}) 
    = (\varphi^{i}_{\theta_2}\varphi^\ell_{\theta_1}-  \varphi^{i}_{\theta_1}\varphi^\ell_{\theta_2})  \frac{1}{\mbox{det}(g_{ij})} (X_{\theta_1} \wedge X_{\theta_2}).
   \end{eqnarray*}
  Observing that 
 \begin{displaymath}
  \varphi^{i}_{\theta_2}\varphi^\ell_{\theta_1}-  \varphi^{i}_{\theta_1}\varphi^\ell_{\theta_2}  =(-1)^{i+\ell}
 ( \varphi_{\theta_1} \wedge \varphi_{\theta_2})_{\sigma(i,\ell)}=     (-1)^{i+\ell} \sqrt{\mbox{det}(h_{ij})}   \mu_{\sigma(i,\ell)} \circ \varphi, \quad 1 \leq i< \ell \leq 3
  \end{displaymath}
  we therefore obtain with the help of \eqref{eq:detG} and \eqref{eq:nuform}
\begin{displaymath}
[G^{ik} G^{j \ell} (\underline D_j x \wedge \underline D_k x)] \circ \varphi = \frac{(-1)^{i+\ell}  \mu_{\sigma(i,\ell)}  }{\sqrt{\dg}}  \circ \varphi \frac{X_{\theta_1} \wedge X_{\theta_2}}{| X_{\theta_1} \wedge X_{\theta_2} |} = (\frac{1}{\sqrt{\dg}}   (-1)^{i+\ell}  \mu_{\sigma(i,\ell)} \nu) \circ \varphi
\end{displaymath}
which implies \eqref{eq:form1}. \\[2mm]

\end{appendix}

\bibliographystyle{plain}


\end{document}